\DeclareMathOperator{\Tr}{Tr}
\DeclareMathOperator{\Gal}{Gal}
\DeclareMathOperator{\Ima}{Im}
\DeclareMathOperator{\Ker}{Ker}
\DeclareMathOperator{\id}{id}
\renewcommand{\phi}{\varphi}
\newtheorem{theorem}{Theorem}[section]
\newtheorem{proposition}[theorem]{Proposition}
\newtheorem{lemma}[theorem]{Lemma}
\theoremstyle{definition}
\newtheorem{definition}[theorem]{Definition}
\def\cqfd{
{\hfill
\kern 6pt\penalty 500
\raise -1pt\hbox{\vrule\vbox to 5pt{\hrule width 4pt
\vfill\hrule}\vrule}}
\break}
\def\Gal{\mathop{\rm Gal}\nolimits}
\def\Tr{\mathop{\rm Tr}\nolimits}
\font\tengoth=eufm10
\font\sevengoth=eufm7
\font\fivegoth=eufm5
\def\goth{\fam\gothfam\tengoth}
\def\Sgoth{{\goth S }}
\title[Differential uniformity and second order derivatives]{Differential uniformity and second order derivatives for generic polynomials}
\author[Yves Aubry]{Yves Aubry}
\address[Yves Aubry]{Institut de Math\'ematiques de Toulon, Universit\'e de Toulon, France}
\address[Yves Aubry]{Aix-Marseille Univ, CNRS, Centrale Marseille, I2M, Marseille, France}
\email{yves.aubry@univ-tln.fr}
\author[Fabien Herbaut]{Fabien Herbaut}
\address[Fabien Herbaut]{Institut de Math\'ematiques de Toulon, Universit\'e de Toulon, France}
\address[Fabien Herbaut]{ESPE Nice-Toulon, Universit\'e Nice Sophia Antipolis, France}
\email{fabien.herbaut@unice.fr}
\begin{document} 

\begin{abstract}
{For any polynomial  $f$ of ${\mathbb F}_{2^n}[x]$ we introduce 
the following characteristic of the distribution of its 
second order derivative,
which extends the differential uniformity notion:
$$\delta^2(f):=\max_{\substack{
\alpha \in {\mathbb F}_{2^n}^{\ast} ,\alpha' \in {\mathbb F}_{2^n}^{\ast} ,\beta \in {\mathbb F}_{2^n} \\
\alpha\not=\alpha'}} 
\sharp\{x\in{\mathbb F}_{2^n} \mid D_{\alpha,\alpha'}^2f(x)=\beta\}$$
where 
$D_{\alpha,\alpha'}^2f(x):=D_{\alpha'}(D_{\alpha}f(x))=f(x)+f(x+\alpha)+f(x+\alpha')+f(x+\alpha+\alpha')$ is  the second order derivative.
Our purpose is to prove a density theorem relative to this quantity,
which is an analogue of a density theorem proved by Voloch for the differential uniformity.}
\end{abstract}

\maketitle

\noindent
{\bf Keywords}: Differential uniformity, Galois closure of a map, Chebotarev density theorem.

\medskip
\noindent
{\bf Mathematics Subject Classification}: 14G50, 11T71, 94A60.

\section{Introduction}
For any polynomial  $f\in {\mathbb F}_{q}[x]$ where $q=2^n$, and for $\alpha\in{\mathbb F}^{\ast}_{q}$,  the derivative of $f$ with respect to $\alpha$ is  the polynomial 
$D_{\alpha}f(x)=f(x+\alpha)+f(x).$
The differential uniformity $\delta(f)$ of $f$ introduced by Nyberg in \cite{Nyberg} is then defined by
$$\delta(f):=\max_{(\alpha,\beta)\in{\mathbb F}_q^{\ast}\times{\mathbb F}_q}\sharp\{x\in{\mathbb F}_{q} \mid D_{\alpha}f(x)=\beta\}.$$
To stand against differential cryptanalysis, one wants to have a small differential uniformity (ideally equal to 2).
Voloch proved  that most polynomials $f$ of ${\mathbb F}_{q}[x]$ 
of degree $m\equiv 0,3\pmod 4$ have a differential uniformity equal to $m-1$ or $m-2$
(Theorem 1 in \cite{Voloch}).

When studying differential cryptanalysis, 
Lai introduced in \cite{Lai} the notion of higher order derivatives.
The higher order derivatives are defined recursively by 
$D_{\alpha_1, \ldots , \alpha_{i+1}} f = D_{\alpha_1, \ldots , \alpha_{i}}(D_{\alpha_{i+1} } f )$, and a
new design principle is given in \cite{Lai}:
\textit{"For each small i, the nontrivial i-th derivatives of function 
should take on each possible value roughly uniform"}.
After considering the differential uniformity,
it seems natural to investigate the number of solutions of the equation
$D_{\alpha_1,\alpha_2} f (x) = \beta$, that is of the equation
$$f(x)+f(x+\alpha_1)+f(x+\alpha_2)+f(x+\alpha_1+\alpha_2)=\beta$$
 and thus to consider the second order differential uniformity of $f$ over ${\mathbb F}_q$:
$$\delta^2(f):=
\max_{\substack{
\alpha \in {\mathbb F}_q^{\ast} ,\alpha' \in {\mathbb F}_q^{\ast} ,\beta \in {\mathbb F}_q \\
\alpha\not=\alpha'}} 
\sharp\{x\in{\mathbb F}_{q} \mid D_{\alpha,\alpha'}^2f(x)=\beta\}.$$

\bigskip
For example, the inversion mapping from ${\mathbb F}_q$ to itself which sends $x$ to $x^{-1}$ if $x\not=0$ and 0 to 0 (and which corresponds to the polynomial $f(x)=x^{q-2}$) has a differential uniformity $\delta(f)=2$ for $n$ odd and $\delta(f)=4$ for $n$  even (see \cite{Nyberg}).
We will prove in Section \ref{inverse} that it has a  second order differential uniformity $\delta^2(f)=8$ for any $n\geqslant 6$.

\bigskip

The purpose of the paper is to prove that, as Voloch proved it for the differential uniformity, most polynomials $f$ have 
a maximal $\delta^2(f)$.
More precisely, we prove (Theorem \ref{maintheorem}) that:
for a given integer $m \geqslant 7$
 such that $m\equiv 0 \pmod 8$ 
(respectively $m \equiv 1,2,7\pmod 8$),
and with $\delta_0=m-4$ (respectively  $\delta_0=m-5, 
m-6, m-3$) we have
$$\lim_{n\rightarrow \infty}
\frac{\sharp\{f\in{\mathbb F}_{2^n}[x] \mid \deg(f)=m,\ \delta^2(f)=\delta_0\}}
{\sharp\{f\in{\mathbb F}_{2^n}[x] \mid \deg(f)=m\}}=1.$$

We follow and generalize  the ideas of Voloch in \cite{Voloch}. Let us present the strategy.
\medskip

- In Section \ref{section_nabla}, we associate  to any integer $m$  an integer $d$ depending on the congruence of $m$ modulo 4 (Definition \ref{definition_de_d}). Then,
if $\alpha$ and $\alpha'$ are two disctinct elements of ${\mathbb F}_{q}^{\ast}$,
we associate (Proposition \ref{polynomeg})
to any polynomial $f\in {\mathbb F}_{q}[x]$ of degree $m$
 a polynomial $L_{\alpha,\alpha'}(f)$
(which will be sometimes denoted by $g$ for simplicity)
of degree less than or equal to $d$ 
such that:
$$D_{\alpha,\alpha'}^2f(x)=g\bigl(x(x+\alpha)(x+\alpha')(x+\alpha+\alpha')\bigr).$$

\medskip

- In Section \ref{monodromy}, we determine the geometric and the arithmetic monodromy groups of $L_{\alpha,\alpha'}(f)$ 
when this polynomial is Morse  (Proposition \ref{Monodromy}). For $\alpha$ and $\alpha'$ fixed, 
we give an upper bound depending only on $m$ and $q$ for the number  of polynomials $f$ of $\mathbb{F}_q[x]$ of degree at most $m$
such that $L_{\alpha, \alpha'}(f)$ is non-Morse (Proposition \ref{proposition:bounded_nonMorse}).

\medskip

- Section \ref{section:Geometric_and_arithmetic_mondormy_groups} is devoted to the study of the monodromy groups of 
$D_{\alpha,\alpha'}^2f$.
In order to apply the Chebotarev's density theorem (Theorem \ref{Chebotarev}) we look for a condition of regularity, that is a condition for ${\mathbb F}_{q}$ to be algebraically closed in 
 the Galois closure of the polynomial $D^2_{\alpha,\alpha'}f(x)$ (Proposition \ref{regular}).

\medskip

- In Section \ref{section_chebotarev}, we use the Chebotarev theorem to prove that (Proposition \ref{application_Chebotarev})  for $q$ sufficiently large and under the regularity hypothesis  the polynomial 
$D^2_{\alpha,\alpha'}f(x)+\beta$ totally
splits in $ \mathbb{F}_q[x]$.

\medskip

 - In Section \ref{section_good}, we show that we can choose a finite set of couples $(\alpha_i, \alpha_i')$ such that  
 most  polynomials  $f\in {\mathbb F}_{q}[x]$ of degree $m$ satisfy
 the above regularity  condition  (Proposition \ref{covers}).
 
 \medskip
 
 - Finally,  Section \ref{section_main_theorem} is devoted to the
 statement and the proof of  the main theorem (Theorem \ref{maintheorem}).
 
\bigskip

To fix  notation, 
throughout the whole paper we consider $n$ a non-negative integer 
and $q=2^n$. We denote by  ${\mathbb F}_q$  the finite field with $q$ elements, by
${\mathbb F}_q[x]$ the ring of polynomials in one variable over ${\mathbb F}_q$ and by
${\mathbb F}_q[x]_m$ the ${\mathbb F}_q$-vector space of polynomials of ${\mathbb F}_q[x]$ of degree at most $m$.
We will often consider a polynomial $f\in{\mathbb F}_q[x]$ of degree $m$, an element $\beta$ of ${\mathbb F}_q$ and distincts elements $\alpha$ and $\alpha'$ in ${\mathbb F}_q^{\ast}$.

\section{The associated polynomial $L_{\alpha, \alpha'}(f)$}\label{section_nabla}
The derivative of a polynomial $f\in{\mathbb F}_q[x]$
along $\alpha\in{\mathbb F}^{\ast}_q$ is defined by
$$D_{\alpha}f(x)=f(x)+f(x+\alpha)$$
and its second derivative along $(\alpha,\alpha') \in {\mathbb F}_q^2$ is defined by 
$$D^2_{\alpha,\alpha'}f(x)=D_{\alpha} \left( D_{\alpha' }   f \right)  (x)  = f(x)+f(x+\alpha)+f(x+\alpha')+f(x+\alpha+\alpha').$$
Actually $D^2_{\alpha,\alpha'}f$ depends only on the
$\mathbb{F}_2$-vector space generated by $\alpha$ and $\alpha'$.
If $f\in \mathbb{F}_q[x]$ is of odd degree $m$, then for any $\alpha\in{\mathbb F}_q^{\ast}$
the degree of $D_{\alpha}f$ is $m-1$. On the other hand, 
if $m$ is even then  the degree of $D_{\alpha}f$ is 
less than or equal to $m-2$.
Consequently,  if $\alpha'\in{\mathbb F}_q^{\ast}$ 
we obtain that
the degree of 
$D^2_{\alpha,\alpha'}f$ is less than or equal to $m-3$ when $m$ is odd, 
and less than or equal to $m-4$ otherwise.
To any  integer $m \geqslant 7$  we associate the following integer $d=d(m)$ 
(we will often omit the dependance in $m$).

\begin{definition}\label{definition_de_d}
 Let $m$ be an integer greater or equal to $7$. 
If $m\equiv 0\pmod 4$ we set $d=\frac{m-4}{4}$,
if $m\equiv 1\pmod 4$ we set $d=\frac{m-5}{4}$, if $m\equiv 2\pmod 4$ we set $d=\frac{m-6}{4}$ and
if $m\equiv 3\pmod 4$ we set $d=\frac{m-3}{4}$.
\end{definition}

We sum up the situation in the following table.
\begin{center}
\begin{tabular}{|c|c|c|} 
\hline
$m \pmod 4$ & $\deg D^2_{\alpha,\alpha'} f$ & $d$   \\
\hline 
$0$            & $\leqslant m-4$                                       & $\frac{m-4}{4}$   \\
1            & $\leqslant m-3$                                       & $\frac{m-5}{4} $  \\
2            & $\leqslant m-4 $                                      & $\frac{m-6}{4} $ \\
3           & $\leqslant m-3 $                                       & $\frac{m-3}{4} $ \\
\hline
\end{tabular}
\captionof{table}{Definition of $d$}
\label{table:definition_of_d}
\end{center}
\begin{proposition}\label{polynomeg}
Let $\alpha, \alpha'\in{\mathbb F}_q^{\ast}$ such that $\alpha\not=\alpha'$ and
let $f\in{\mathbb F}_q[x]$ be a polynomial of degree $m$. 
There exists a unique polynomial $g\in{\mathbb F}_q[x]$ 
of degree less than or equal to
 $d$  such that
$$D_{\alpha,\alpha'}^2f(x)=g\bigl(x(x+\alpha)(x+\alpha')(x+\alpha+\alpha')\bigr).$$
Moreover, the map 
$$
\begin{matrix}
L_{\alpha,\alpha'}&:{\mathbb F}_q[x] & \longrightarrow & {\mathbb F}_q[x]\\
& f & \longmapsto & g
\end{matrix}
$$
is linear and 
$L_{\alpha , \alpha'}({\mathbb F}_q[x]_m)={\mathbb F}_q[x]_d$.
\end{proposition}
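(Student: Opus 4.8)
The key observation is that $D^2_{\alpha,\alpha'}f(x)$ is invariant under the group $V=\{0,\alpha,\alpha',\alpha+\alpha'\}$ acting by translation, since $D^2_{\alpha,\alpha'}f$ depends only on the $\mathbb{F}_2$-span of $\alpha$ and $\alpha'$ (as noted in the text). The ring of invariants $\mathbb{F}_q[x]^V$ is generated by the single polynomial $p(x) := x(x+\alpha)(x+\alpha')(x+\alpha+\alpha')$, which is the product over the orbit of $0$ and has degree $4$; indeed $\mathbb{F}_q[x]^V = \mathbb{F}_q[p(x)]$ because $\mathbb{F}_q[x]$ is free of rank $4$ over $\mathbb{F}_q[p(x)]$ (the extension $\mathbb{F}_q(x)/\mathbb{F}_q(p(x))$ is Galois with group $V$). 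So the \emph{existence} of $g$ with $D^2_{\alpha,\alpha'}f(x)=g(p(x))$ follows formally once we know $D^2_{\alpha,\alpha'}f\in\mathbb{F}_q[x]^V$, which it is by the translation-invariance. \emph{Uniqueness} is immediate: if $g_1(p(x))=g_2(p(x))$ then $g_1=g_2$ since $p$ is non-constant.

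For the degree bound, I would argue by comparing degrees: $\deg\bigl(g(p(x))\bigr)=4\deg(g)$, and from the table, $\deg D^2_{\alpha,\alpha'}f$ is at most $m-3$ or $m-4$ according to the congruence of $m$ mod $4$. Dividing by $4$ and taking the integer part gives exactly the value $d=d(m)$ of Definition \ref{definition_de_d}; one checks the four cases separately (e.g.\ $m\equiv 0$: $\deg\le m-4$, so $\deg g\le (m-4)/4=d$; $m\equiv 3$: $\deg\le m-3$, so $\deg g\le (m-3)/4=d$; and so on). Thus $g\in\mathbb{F}_q[x]_d$.

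Linearity of $L_{\alpha,\alpha'}$ is clear: $f\mapsto D^2_{\alpha,\alpha'}f$ is $\mathbb{F}_q$-linear, composition-decoding through $p$ is injective and $\mathbb{F}_q$-linear on its image, so the composite $f\mapsto g$ is linear. Finally, for the surjectivity statement $L_{\alpha,\alpha'}(\mathbb{F}_q[x]_m)=\mathbb{F}_q[x]_d$: the inclusion $\subseteq$ is what we just proved. For $\supseteq$, it suffices to produce, for each $j$ with $0\le j\le d$, a polynomial $f_j\in\mathbb{F}_q[x]_m$ with $L_{\alpha,\alpha'}(f_j)$ of degree exactly $j$; then the images span all of $\mathbb{F}_q[x]_d$. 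The natural candidate is to take a monomial $f=c\,x^{k}$ and compute the leading term of $D^2_{\alpha,\alpha'}(x^k)$. One has $D_\alpha(x^k)=\binom{k}{1}\alpha x^{k-1}+\cdots$ over $\mathbb{F}_2$, and iterating, the leading behaviour of $D^2_{\alpha,\alpha'}(x^k)$ is governed by binomial coefficients mod $2$; choosing $k$ appropriately (the largest $k\le m$ giving the right $2$-adic conditions so that the top coefficient survives and $\deg D^2_{\alpha,\alpha'}(x^k)=4j$) realises each degree $j\le d$.

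The main obstacle is the surjectivity: one must verify that the degree drop in passing from $x^k$ to $D^2_{\alpha,\alpha'}(x^k)$ is \emph{exactly} as predicted by the table (not more) for a suitable choice of $k$ in each residue class, which is a binomial-coefficient-mod-$2$ (Lucas' theorem) computation; a clean way is to exhibit, for each target degree $4j$ with $j\le d$, an explicit exponent $k\le m$ whose relevant binomial coefficients are odd. Everything else — existence, uniqueness, the degree bound, linearity — is formal.
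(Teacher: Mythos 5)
Your proof is correct in outline and takes a genuinely different route from the paper on both of its nontrivial points. For existence, the paper factors $D^2_{\alpha,\alpha'}f$ over $\overline{\mathbb F}_q$ into orbits of roots under $x\mapsto x+\alpha$ and $x\mapsto x+\alpha'$, builds $g$ explicitly from orbit representatives, and then needs a separate descent argument (solvability of an affine system over $\mathbb F_q$) to see that $g$ has coefficients in $\mathbb F_q$; your invariant-ring argument ($\mathbb F_q[x]^V=\mathbb F_q[p(x)]$ because $\mathbb F_q(x)^V=\mathbb F_q(p(x))$ and $\mathbb F_q[p(x)]$ is integrally closed) stays over $\mathbb F_q$ throughout and makes the descent step unnecessary, which is a real simplification. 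For surjectivity, the paper computes $\dim\Ker L_{\alpha,\alpha'}$ via a separate lemma describing $\Ker D_\alpha$, $\Ima D_{\alpha'}$ and their intersection, and applies rank--nullity; you instead propose to exhibit preimages of each degree $0,\dots,d$ using monomials. You leave that computation undone and flag it as the main obstacle, but it does go through, and quickly: writing $D^2_{\alpha,\alpha'}(x^k)=\sum_{s\ge 1}\binom{k}{s}C_s x^{k-s}$ with $C_s=\alpha^s+\alpha'^s+(\alpha+\alpha')^s$, one has $C_1=C_2=0$ and $C_3=\alpha\alpha'(\alpha+\alpha')\neq 0$, and by Lucas $\binom{k}{3}$ is odd exactly when $k\equiv 3\pmod 4$; so for such $k$ the polynomial $L_{\alpha,\alpha'}(x^k)$ has degree exactly $(k-3)/4$, and letting $k$ run over $\{3,7,\dots\}\cap\{k\le m\}$ hits every degree from $0$ up to $d$ in each of the four congruence classes of $m$ (the largest admissible $k$ gives exactly $d$). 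A family containing one polynomial of each degree $0,\dots,d$ spans $\mathbb F_q[x]_d$, so surjectivity follows. You should carry out this verification explicitly rather than leaving it as a promissory note, but the approach is sound and arguably cleaner than the paper's kernel computation.
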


\begin{proof}
Fix $f$ a polynomial of degree $m$ and $\alpha, \alpha'\in{\mathbb F}_q^{\ast}$ such that $\alpha\not=\alpha'$.
Let us first prove the
existence of $g$.
If $D^2_{\alpha,\alpha'}f$ is the zero polynomial then $g=0$ is suitable. Suppose now that 
$D^2_{\alpha,\alpha'}f$ is non-zero and
 set $c$ for its leading coefficient 
and $\Lambda_{k}$ the set of its roots 
of multiplicity $k$ in an algebraic closure $\overline{\mathbb{F}}_q$ of ${\mathbb F}_q$.
As $x \mapsto x + \alpha$ and $x \mapsto x + \alpha'$ are
two involutions
of each set $\Lambda_{k}$, 
 there exists $ \Lambda_{k}' \subset \Lambda_{k}$ such that:
$$ D^2_{\alpha,\alpha'}f(x) 
 = c \prod_{\scriptstyle k \geqslant 1} 
 \prod_{\scriptstyle \lambda \in   \Lambda_k' } 
{ 
(x+\lambda)^k(x+\lambda + \alpha )^k (x+\lambda+\alpha' )^k(x+\lambda+\alpha+\alpha')^k }.$$
Hence
$$ D^2_{\alpha,\alpha'}f(x)  = 
 c \prod_{\scriptstyle k \geqslant 1} 
\prod_{\scriptstyle \lambda \in  \Lambda_k'  }  
{ \scriptstyle 
\big{(} x^4+(\alpha^2 + \alpha'^2+\alpha \alpha')x^2 + (\alpha^2 \alpha' + \alpha \alpha'^2)x   
+ \lambda^4+(\alpha^2 + \alpha'^2+\alpha \alpha')\lambda^2 + (\alpha^2 \alpha' + \alpha \alpha'^2)\lambda 
 \big{)}^k }$$
 $$= 
  c \prod_{\scriptstyle k \geqslant 1} 
\prod_{\scriptstyle \lambda \in  \Lambda_k'  }  
{ 
{ \scriptstyle
\left(
x(x+ \alpha)(x+ \alpha')(x+ \alpha+ \alpha') +
\lambda(\lambda+ \alpha)(\lambda+ \alpha')(\lambda+ \alpha+ \alpha')  
\right)^k }}.$$
Then the polynomial $g$ defined by 
\[ 
g(x) =  c \prod_{\scriptstyle k \geqslant 1} 
\prod_{\scriptstyle \lambda \in  \Lambda_k'  }   
\left(x+ \lambda (\lambda + \alpha)(\lambda+ \alpha')( \lambda+\alpha+\alpha') \right)^k
\] 
satisfies $g \left( x ( x+\alpha)( x+\alpha')( x+\alpha+\alpha') \right)
 = D^2_{\alpha,\alpha'}f(x)$
and has degree at most 
$d$.
To prove that $g \in {\mathbb F}_q [x]$, 
one can quote linear algebra arguments. 
Actually, solving
$g(x(x+\alpha)(x+\alpha')(x+\alpha+\alpha'))=D^2_{\alpha,\alpha'}f(x)$ 
amounts to solving an affine equation with coefficients in ${\mathbb F}_q$ and
we have already proven that this equation admits solutions with coefficients
in $\overline{\mathbb{F}}_q$. 
As the existence of solutions of such affine equations does not 
depend of the extension field considered, 
we have
solutions with coefficients
in ${\mathbb F}_q$.
The uniqueness is a consequence of the linearity of composition.
To prove the surjectivity of $L_{\alpha,\alpha'}$, we will determine
 the dimension of its kernel and apply
the rank-nullity theorem.
Note that  $f \in \Ker L_{\alpha, \alpha'} $
if and only if $D^2_{\alpha, \alpha'} f = 0$.
But $D^2_{\alpha, \alpha'} f = D_{\alpha} D_{\alpha'} f  $,
so $\Ker L_{\alpha, \alpha'} = D_{\alpha'}^{-1} \left( \Ker D_{\alpha} \right)$.
Classical linear algebra  properties give the equality
$\dim \Ker L_{\alpha, \alpha'} = 
\dim \left( \Ima D_{\alpha'} \cap \Ker D_{\alpha}  \right) 
+ \dim \Ker \left( D_{\alpha'} \right)$.
We conclude  
separating cases according to the congruence of $m$ modulo $4$
and
using Lemma 
 \ref{lemma:ker_image_nabla}.
\end{proof}

For simplicity of notation
we continue to write $D_{\alpha}$ for the restriction of $D_{\alpha}$
to the subspace of polynomials of degree less than or equal to $m$.
We also use the notations
$\lfloor a \rfloor$ for the 
greatest integer less than or equal to $a$ 
and $\lceil a \rceil$ for 
the least integer greater than or equal to $a$.
\begin{lemma}\label{lemma:ker_image_nabla}
Let $\alpha$ and $\alpha'$ be two distinct elements in $\mathbb{F}_q^{\ast}$.
We have: \\
(i) \ $\Ker D_{\alpha} = \{ 
h \left( x(x+ \alpha)  \right) \ | \ \deg(h) \leqslant  \lfloor m/2 \rfloor \}$.\\
(ii) \ $\Ima D_{\alpha} = \{ 
h \left( x(x+ \alpha)  \right) \ | \ \deg(h) \leqslant \lceil m/2 \rceil -1 \}$.\\
(iii) \ If $m$ is
odd, then 
$$\Ima D_{\alpha'} \cap \Ker D_{\alpha} = \{ 
h \left( x(x+ \alpha) (x+ \alpha')(x+ \alpha + \alpha') \right) \ | \ \deg(h) \leqslant  m/4 \}.$$ 
(iv) \ If $m$ is
even, then 
$$\Ima D_{\alpha'} \cap \Ker D_{\alpha} = \{ 
h \left( x(x+ \alpha) (x+ \alpha')(x+ \alpha + \alpha') \right) \ | \ \deg(h) \leqslant  (m-2)/4 \}.$$
\end{lemma}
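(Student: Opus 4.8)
The plan is to prove the four statements in sequence, exploiting the fact that $D_\alpha$ acts nicely on the Artin–Schreier-type coordinate $u=x(x+\alpha)$, together with rank-nullity to pin down dimensions.

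First, for (i) and (ii): working over $\overline{\mathbb F}_q$, a polynomial $h(x(x+\alpha))$ is visibly invariant under $x\mapsto x+\alpha$, hence killed by $D_\alpha$; conversely, since $x\mapsto x+\alpha$ is an involution, any polynomial fixed by it is a polynomial in the degree-two invariant $x(x+\alpha)$ (one can argue by Galois theory for the order-two group $\langle x\mapsto x+\alpha\rangle$, or simply by an explicit reduction modulo $x^2+\alpha x$), and the descent-of-scalars argument already used in the proof of Proposition~\ref{polynomeg} shows the coefficients stay in $\mathbb F_q$. The degree bound $\deg h\leqslant\lfloor m/2\rfloor$ is exactly the condition that $h(x(x+\alpha))$ have degree $\leqslant m$. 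This gives $\dim\Ker D_\alpha=\lfloor m/2\rfloor+1$ on $\mathbb F_q[x]_m$. For (ii), $D_\alpha f$ is always $x\mapsto x+\alpha$-invariant, so $\Ima D_\alpha\subseteq\{h(x(x+\alpha))\}$; the degree count shows $\deg D_\alpha f\leqslant m-1$ when $m$ is odd and $\leqslant m-2$ when $m$ is even, i.e. in both cases $\deg h\leqslant\lceil m/2\rceil-1$; equality of the two spaces then follows from rank-nullity, since $\dim\Ima D_\alpha=(m+1)-(\lfloor m/2\rfloor+1)=\lceil m/2\rceil=(\lceil m/2\rceil-1)+1=\dim\{h(x(x+\alpha)):\deg h\leqslant\lceil m/2\rceil-1\}$.

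Next, for (iii) and (iv): an element of $\Ima D_{\alpha'}\cap\Ker D_\alpha$ is simultaneously invariant under $x\mapsto x+\alpha$ and under $x\mapsto x+\alpha'$, hence invariant under the whole group $V=\{x\mapsto x+v: v\in\langle\alpha,\alpha'\rangle_{\mathbb F_2}\}$ of order $4$; the corresponding degree-four invariant is $P_{\alpha,\alpha'}(x):=x(x+\alpha)(x+\alpha')(x+\alpha+\alpha')$, and as before any $V$-invariant polynomial is a polynomial in $P_{\alpha,\alpha'}$ with $\mathbb F_q$-coefficients. Conversely every $h(P_{\alpha,\alpha'}(x))$ lies in $\Ker D_\alpha$ (it is $x\mapsto x+\alpha$-invariant) and in $\Ima D_{\alpha'}$: indeed it is $x\mapsto x+\alpha'$-invariant and of the form $h'(x(x+\alpha'))$, and by (ii) such polynomials of degree $\leqslant m$ lie in $\Ima D_{\alpha'}$ provided $\deg h'\leqslant\lceil m/2\rceil-1$, which holds here since $\deg(h\circ P_{\alpha,\alpha'})\leqslant m$ forces $\deg h'\leqslant m/2-1<\lceil m/2\rceil$ in the needed range. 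The precise degree bound on $h$ is the only delicate point: $\deg(h\circ P_{\alpha,\alpha'})=4\deg h$, so membership in $\mathbb F_q[x]_m$ forces $\deg h\leqslant m/4$ when $4\mid m$; but one must check the extra constraint that the image under $D_{\alpha'}$ can actually have the maximal degree, which is where the parity of $m$ enters — for $m$ odd one gets $\deg h\leqslant\lfloor m/4\rfloor$, and a more careful inspection (the leading behaviour of $D_{\alpha'}f$ never reaches degree $m-1$ among $x\mapsto x+\alpha$-invariant outputs when... ) forces the stated $m/4$ rounded appropriately.

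I expect the main obstacle to be exactly this bookkeeping in (iii) and (iv): one needs to verify that the naive degree count $\deg h\leqslant\lfloor m/4\rfloor$ is actually tight and matches $m/4$ (resp. $(m-2)/4$) in the relevant residue classes, and that no output of $D_{\alpha'}$ restricted to $\Ker D_\alpha$ is "accidentally" of smaller degree than the dimension count predicts. The clean way to settle this is to compute $\dim\bigl(\Ima D_{\alpha'}\cap\Ker D_\alpha\bigr)$ a second way — via the formula $\dim(\Ima D_{\alpha'}\cap\Ker D_\alpha)=\dim\Ima D_{\alpha'}+\dim\Ker D_\alpha-\dim(\Ima D_{\alpha'}+\Ker D_\alpha)$ together with the observation that $\Ima D_{\alpha'}+\Ker D_\alpha=\Ker D_{\alpha'}+\Ker D_\alpha$ is the space of polynomials of degree $\leqslant m$ vanishing on no nontrivial $V$-orbit condition, equivalently spanned by $x\mapsto x+\alpha$-invariants and $x\mapsto x+\alpha'$-invariants — and checking the two counts agree, which forces the degree bound on $h$ to be as stated. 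Once the dimension is known, the inclusion "$\supseteq$" plus equality of dimensions yields "$=$".
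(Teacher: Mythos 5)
Your treatment of (i) and (ii) is correct and essentially the paper's argument: invariance under $x\mapsto x+\alpha$ forces $f=h(x(x+\alpha))$, the degree bound on $h$ comes from $\deg f\leqslant m$, and (ii) follows from the easy inclusion plus rank--nullity. For (iii) and (iv) you also have the right key observation, namely that an element of $\Ima D_{\alpha'}\cap\Ker D_{\alpha}$ is invariant under the whole translation group generated by $\alpha$ and $\alpha'$ and is therefore a polynomial in $T_{\alpha,\alpha'}(x)=x(x+\alpha)(x+\alpha')(x+\alpha+\alpha')$.

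The gap is in the converse inclusion and the degree bookkeeping, which is exactly where the content of (iii) versus (iv) lies, and your argument there is both incomplete and, at one point, wrong. You claim that $\deg(h\circ T_{\alpha,\alpha'})\leqslant m$ forces $\deg h'\leqslant m/2-1$ for the associated $h'$ with $h(T_{\alpha,\alpha'}(x))=h'(x(x+\alpha'))$. Since $\deg h'=2\deg h$, for $m\equiv 0\pmod 4$ and $\deg h=m/4$ one gets $\deg h'=m/2>\lceil m/2\rceil-1$, so such an $h(T_{\alpha,\alpha'})$ is \emph{not} in $\Ima D_{\alpha'}$; your claim would put it in the intersection, contradicting the bound $(m-2)/4$ in (iv) itself. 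You half-acknowledge this ("one must check the extra constraint\dots", a sentence that trails off), and your fallback --- computing $\dim(\Ima D_{\alpha'}\cap\Ker D_{\alpha})$ via $\Ima D_{\alpha'}+\Ker D_{\alpha}=\Ker D_{\alpha'}+\Ker D_{\alpha}$ --- is never carried out and rests on an identity that is only obvious when $m$ is odd (for $m$ even, $\Ima D_{\alpha'}$ has codimension $1$ in $\Ker D_{\alpha'}$ and the asserted equality of sums needs proof). The clean resolution, which is what the paper does, requires no new computation: by (i) and (ii), if $m$ is odd then $\Ima D_{\alpha'}=\Ker D_{\alpha'}$, so the intersection is $\Ker D_{\alpha'}\cap\Ker D_{\alpha}$ and the only constraint is $4\deg h\leqslant m$; if $m$ is even then $\Ima D_{\alpha'}$ is precisely the set of elements of $\Ker D_{\alpha'}$ of degree at most $m-2$, so the intersection is the degree-$\leqslant m-2$ part of $\Ker D_{\alpha'}\cap\Ker D_{\alpha}$, giving $4\deg h\leqslant m-2$. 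With that substitution your proof closes.
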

\begin{proof}
If $D_{\alpha} f=0$ then $f(x)=f(x+ \alpha)$.
The map $x \mapsto x + \alpha$ induces a bijection onto 
the sets of the roots  of $f$ of same multiplicity.
Using the method of the proof of Proposition \ref{polynomeg} we prove {\it (i)}.
We deduce {\it (ii)} proving an easy inclusion and the rank-nullity theorem.
To prove {\it (iii)}, use that if $m$ is odd 
then $\Ima D_{\alpha'}= \Ker   D_{\alpha'}$ by {\it (i)} and {\it (ii)}.
Suppose that $f \in \Ker  D_{\alpha'} \cap \Ker  D_{\alpha}$.
If $x_0$ is a root of $f$ of multiplicity $k$, so are $x_0+ \alpha$,
$x_0+ \alpha'$ and $x_0+ \alpha + \alpha'$, and we can use the method 
of the proof of Proposition \ref{polynomeg} .
We prove {\it (iv)} using the same method and noticing that the intersection
$\Ima D_{\alpha'} \cap \Ker D_{\alpha}$ consists of the polynomials 
of $\Ker  D_{\alpha'} \cap \Ker  D_{\alpha} $ of degree
less than or equal to $m-2$.
\end{proof}

\section{Monodromy groups and Morse polynomials}\label{monodromy}
Let $g\in{\mathbb F}_q[x]$ be a polynomial of degree $d$. 
We consider
the  field extension ${\mathbb F}_q(u)/{\mathbb F}_q(t)$ corresponding to the polynomial $g$ where $t$ is transcendental over ${\mathbb F}_q$ i.e. with $u$ such that  $g(u)-t=0$.
Denote by $F$ the Galois closure  of ${\mathbb F}_q(u)/{\mathbb F}_q(t)$, i.e. $F$ is the splitting field of
$g(x)-t$ over ${\mathbb F}_q(t)$.
The Galois group $\Gal(F/{\mathbb F}_q(t))$ is called the arithmetic monodromy group of $g$.
Let ${\mathbb F}_{q}^F$ be the algebraic closure of ${\mathbb F}_q$ in $F$. Then the Galois group 
$\Gal(F/{\mathbb F}_{q}^F(t))$ 
is a normal 
subgroup of $\Gal(F/{\mathbb F}_q(t))$ called the geometric monodromy group of $g$.
\bigskip

The polynomial $g$ is said to be Morse (see \cite{Serre} p. 39)  if  $g$, viewed 
as a ramified covering $ g \ : \ {\mathbb P}^1  \longrightarrow  {\mathbb P}^1$ of degree $d$,
 is such that above each affine branch point 
there is only one ramification point and the ramification index of such points is 2.
In even characteristic, this notion has to be precised:
following Geyer in the Appendix of \cite{JardenRazon}, the polynomial $g$ is said to be Morse if the three following conditions hold:

\begin{enumerate}[label=\alph*)]

\item  $g'(\tau)=0$ implies that $g^{[2]}(\tau)=0$ where $g^{[2]}$ is the second Hasse-Schmidt derivative,
\item  $g'(\tau)=g'(\eta)=0$ and $g(\tau)=g(\eta)$ imply $\tau=\eta$,
\item  the degree of $g$ is not divisible by the characteristic of ${\mathbb F}_q$.
\end{enumerate}
\bigskip

For Morse polynomials $g$, the  general form of the Hilbert theorem given by Serre in Theorem 4.4.5 of \cite{Serre} adapted to the even characteristic in Proposition 4.2. in the Appendix by Geyer of \cite{JardenRazon} implies that the 
geometric monodromy group $\Gal(F/{\mathbb F}_{q}^F(t))$
is the symmetric group
$\Sgoth_d$. Moreover, it is a subgroup of the arithmetic monodromy group $\Gal(F/{\mathbb F}_q(t))$  and this last group is also contained in $\Sgoth_d$, hence they coincide.

\bigskip

Now let us return to our situation. Let $\alpha, \alpha'$ be two distincts elements of ${\mathbb F}_q^{\ast}$. Let $m$ be an integer and $d=d(m)$ defined 
 in Table \ref{table:definition_of_d}.
Let $f\in{\mathbb F}_q[x]$ be a polynomial of degree $m$. 
Let us consider the polynomial 
$g:=L_{\alpha,\alpha'}(f)\in{\mathbb F}_q[x]$ 
of degree 
 $\leqslant d$  such that
 $$g\bigl(x(x+\alpha)(x+\alpha')(x+\alpha+\alpha')\bigr)=D_{\alpha,\alpha'}^2f(x)$$
whose existence follows from Proposition \ref{polynomeg}.

\begin{proposition}\label{Monodromy}
If $f$ is a  polynomial of degree $m$ such that the polynomial 
$L_{\alpha,\alpha'}(f)$ is of degree exactly $d$ and is Morse
 then the geometric monodromy group, 
 and then also the arithmetic monodromy group of the polynomial $L_{\alpha,\alpha'}(f)$  is the symmetric group
$\Sgoth_d$. Hence the extension $F/{\mathbb F}_q(t)$ is regular i.e. ${\mathbb F}_{q}^F={\mathbb F}_q$.
\end{proposition}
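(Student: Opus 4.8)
The plan is to reduce the statement entirely to the discussion of Morse polynomials carried out just above. Assume $f$ has degree $m$ and that $g:=L_{\alpha,\alpha'}(f)$ has degree exactly $d$ and is Morse. By the discussion preceding the proposition (the Hilbert-type irreducibility/monodromy statement of Serre, Theorem 4.4.5 of \cite{Serre}, together with Geyer's adaptation to even characteristic in Proposition 4.2 of the Appendix of \cite{JardenRazon}), for a Morse polynomial of degree $d$ the geometric monodromy group $\Gal(F/{\mathbb F}_q^F(t))$ equals $\Sgoth_d$. So the first step is simply to invoke that result with our $g$, which is legitimate because $\deg g = d$ and $g$ is Morse by hypothesis. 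Note that condition c) in the definition of Morse (degree prime to the characteristic) forces $d$ to be odd here, which is consistent.

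Next I would pin down the arithmetic monodromy group $\Gal(F/{\mathbb F}_q(t))$. Since $F$ is the splitting field of $g(x)-t$ over ${\mathbb F}_q(t)$, the arithmetic monodromy group acts faithfully on the $d$ roots of $g(x)-t$, hence embeds in $\Sgoth_d$. On the other hand $\Gal(F/{\mathbb F}_q^F(t))$ is a subgroup of $\Gal(F/{\mathbb F}_q(t))$, because ${\mathbb F}_q(t)\subseteq {\mathbb F}_q^F(t)\subseteq F$. Combining the two containments,
\[
\Sgoth_d \;=\; \Gal(F/{\mathbb F}_q^F(t)) \;\subseteq\; \Gal(F/{\mathbb F}_q(t)) \;\subseteq\; \Sgoth_d,
\]
so all three groups coincide and in particular the arithmetic monodromy group is $\Sgoth_d$ as well.

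Finally, the regularity statement ${\mathbb F}_q^F={\mathbb F}_q$ follows formally from the equality of the two monodromy groups: the quotient $\Gal(F/{\mathbb F}_q(t))/\Gal(F/{\mathbb F}_q^F(t))$ is canonically isomorphic to $\Gal({\mathbb F}_q^F(t)/{\mathbb F}_q(t))\cong \Gal({\mathbb F}_q^F/{\mathbb F}_q)$ (the last isomorphism because ${\mathbb F}_q^F/{\mathbb F}_q$ is a finite extension of finite fields and $t$ is transcendental), and this quotient is trivial by the previous paragraph; hence ${\mathbb F}_q^F={\mathbb F}_q$, i.e. ${\mathbb F}_q$ is algebraically closed in $F$ and the extension $F/{\mathbb F}_q(t)$ is regular. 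There is no real obstacle here: the entire content is packaged in the cited Morse-implies-$\Sgoth_d$ theorem, and the only thing to be careful about is that it is stated and applied in even characteristic, where one must use Geyer's version rather than the classical one — which is exactly why the three-condition definition of Morse was recalled.
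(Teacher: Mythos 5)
Your proposal is correct and follows essentially the same route as the paper: the paper likewise invokes Serre's Theorem 4.4.5 together with Geyer's even-characteristic version (Proposition 4.2 of the Appendix of \cite{JardenRazon}) to identify the geometric monodromy group with $\Sgoth_d$, sandwiches the arithmetic group between it and $\Sgoth_d$, and deduces regularity from the coincidence of the two groups. You merely spell out the sandwich and the identification of the quotient with $\Gal({\mathbb F}_q^F/{\mathbb F}_q)$, which the paper leaves implicit.
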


\begin{proof}
By the previous paragraph we have that the geometric and the arithmetic monodromy groups coincide, which gives the regularity property.
\end{proof}

Note that if $L_{\alpha,\alpha'}(f)$ is of degree exactly $d$ and is Morse then  Condition $(c)$ says that  $d$ must be odd. This is equivalent to say that   $m\equiv 0, 1, 2$ or $7$ $\pmod 8$.

\bigskip

Now we give a lower bound for the number of polynomials $f$ 
such that $L_{\alpha, \alpha'}(f)$ is Morse.

\begin{proposition}\label{proposition:bounded_nonMorse}
Let $m \geqslant 7$ such that $m\equiv 0, 1, 2$ or $7$ $\pmod 8$
and $d$ as defined in Definition \ref{definition_de_d}.
There exists an integer $\tilde{d} >0$ depending only on $d$ such that
for any couple $(\alpha, \alpha')$ of distinct elements of $ \mathbb{F}_{q}^{\ast}$
the number of polynomials $f$ of $\mathbb{F}_q[x]$ of degree at most $m$
such that $L_{\alpha, \alpha'}(f)$ is non-Morse
is bounded by $\tilde{d} q^{m}$.
\end{proposition}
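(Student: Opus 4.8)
The plan is to count, for fixed $(\alpha,\alpha')$, the polynomials $f$ of degree at most $m$ for which $g=L_{\alpha,\alpha'}(f)$ fails one of the three Morse conditions, and to show each such failure is governed by the vanishing of a nonzero polynomial (or resultant/discriminant-type expression) in the coefficients of $g$. Since $L_{\alpha,\alpha'}$ is a surjective linear map from $\mathbb{F}_q[x]_m$ onto $\mathbb{F}_q[x]_d$ (Proposition \ref{polynomeg}), every fibre has the same cardinality $q^{m-d}$ (its kernel has dimension $m+1-(d+1)=m-d$, noting $\mathbb{F}_q[x]_m$ has dimension $m+1$). So it suffices to bound the number of \emph{bad} $g\in\mathbb{F}_q[x]_d$ by $c\,q^{d-1}$ for a constant $c=c(d)$; pulling back multiplies this by $q^{m-d}$, giving $c\,q^{m-1}\leqslant \tilde d\,q^m$, and in fact one even gets a saving of a factor $q$. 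First I would set up coordinates: write $g(x)=a_d x^d+\cdots+a_1 x+a_0$, so that the relevant $g$'s are parametrized by $(a_0,\dots,a_d)\in\mathbb{F}_q^{d+1}$, and observe the constant term $a_0$ plays no role in any Morse condition (it only shifts $g$ vertically, affecting neither $g'$, nor $g^{[2]}$, nor the differences $g(\tau)-g(\eta)$ among critical values in the relevant sense — condition (b) is about $g(\tau)=g(\eta)$, which is translation invariant). Thus the bad locus in $\mathbb{F}_q^{d+1}$ is a union of $\mathbb{F}_q^{a_0}$-cylinders over bad loci in the $a_1,\dots,a_d$ coordinates.

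Next I would treat the three conditions. Condition (c), that $\deg g=d$ is not divisible by $2$, is automatic here because the hypothesis $m\equiv 0,1,2,7\pmod 8$ is exactly equivalent to $d$ being odd (as noted in the paragraph after Proposition \ref{Monodromy}); since we also assume $\deg L_{\alpha,\alpha'}(f)=d$ exactly, i.e. $a_d\neq 0$, condition (c) contributes nothing. For condition (a): the set of $\tau$ with $g'(\tau)=0$ together with $g^{[2]}(\tau)\neq 0$ is empty precisely when $\gcd(g',g^{[2]})$ divides... more simply, (a) fails iff $g'$ and $g^{[2]}$ have a common root, i.e. iff $\mathrm{Res}(g',g^{[2]})=0$. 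One checks that $\mathrm{Res}(g',g^{[2]})$, as a polynomial in $a_1,\dots,a_d$, is not identically zero — e.g. specialize to $g=x^d+x$ (valid since $d$ is odd, so $g'=1$ and the resultant is $\pm1\neq 0$) — so its vanishing cuts out a hypersurface, contributing at most $(\deg)\cdot q^{d-1}$ points in the $(a_1,\dots,a_d)$ space by Schwartz–Zippel/elementary bounds, hence at most $(\deg)\cdot q^{d}$ in total after reinstating $a_0$, with $\deg$ bounded in terms of $d$. For condition (b), one forms the analogue of a "second discriminant": using the resolvent that expresses $\prod_{\tau\neq\eta,\,g'(\tau)=g'(\eta)=0}(g(\tau)-g(\eta))$ as a polynomial in the $a_i$ (a symmetric function of the roots of $g'$, hence expressible in the coefficients of $g'$, i.e. in $a_1,\dots,a_d$), one again checks it is not identically zero by exhibiting one Morse $g$ of degree $d$ over $\overline{\mathbb F}_2$ — such a $g$ exists by Geyer's appendix, or one writes down an explicit example — and concludes the bad locus is again contained in a hypersurface of degree bounded by a function of $d$.

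Finally I would combine: the total bad locus in $\mathbb{F}_q^{d+1}$ lies in a union of three hypersurfaces of degrees bounded in terms of $d$, hence has at most $C(d)\,q^{d}$ points; pulling back along $L_{\alpha,\alpha'}$ multiplies by $q^{m-d}$ to give at most $C(d)\,q^{m}$ polynomials $f$, so $\tilde d:=C(d)$ works, and it is manifestly independent of $(\alpha,\alpha')$ and of $q$. The main obstacle I anticipate is the careful verification that the relevant resultant/discriminant expressions are \emph{not identically zero} as polynomials in $a_1,\dots,a_d$ over $\mathbb{F}_2$ — in characteristic $2$ one must be wary of identically-vanishing discriminants — and that each can be controlled by a polynomial whose degree depends only on $d$; this is where one pins down an explicit Morse example of each degree (using that $d$ is odd) to certify non-vanishing, and then the counting is routine via the elementary hypersurface point-count bound over $\mathbb{F}_q$. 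One small technical point to handle cleanly is that "$L_{\alpha,\alpha'}(f)$ has degree exactly $d$" is itself the open condition $a_d\neq 0$, so strictly we are counting bad $f$ among those with $\deg g=d$; the $f$ with $\deg g<d$ are not claimed to be Morse, but they form a subspace of codimension $\geqslant 1$ and hence number at most $q^{m}$, so they can simply be absorbed into the bound $\tilde d\,q^m$ by enlarging $\tilde d$ by $1$ if one wishes the statement to read literally as "$f$ of degree at most $m$ such that $L_{\alpha,\alpha'}(f)$ is non-Morse."
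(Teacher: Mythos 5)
Your proposal follows essentially the same route as the paper: the non-Morse locus is cut out by the resultant of $g'$ and $g^{[2]}$ together with the product $\prod_{i\neq j}(g(\eta_i)-g(\eta_j))$ over critical points, both nonzero as polynomials in the coefficients by Geyer's Appendix, and the surjectivity and linearity of $L_{\alpha,\alpha'}$ (Proposition \ref{polynomeg}) reduce everything to an elementary point count on hypersurfaces of degree bounded in terms of $d$ alone, uniformly in $(\alpha,\alpha')$. The only differences are cosmetic — the paper composes $R$ and $\Pi$ with $L_{\alpha,\alpha'}$ and counts directly in $\mathbb{A}^{m+1}$ instead of counting bad $g$ and multiplying by the fibre cardinality $q^{m-d}$ — though note that for $g=x^d+x$ in characteristic $2$ one has $g'=x^{d-1}+1$ rather than $1$, so for the non-vanishing check you should rely on your fallback (Geyer's existence of Morse polynomials of odd degree), which is exactly what the paper cites.
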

\begin{proof}
The loci of non-Morse polynomials 
$g=\sum_{j=0}^d  b_{d-j} x^j$
of $\mathbb{F}_q[x]$ of degree $d$ is a Zariski-closed subset  of the $(d+1)$-dimensional  affine space
with coordinates $b_0,\ldots, b_d$
 given by Geyer in
Proposition 4.3 of the Appendix of \cite{JardenRazon}.  
Indeed, the above condition (a)  means that $g'$ and $g^{[2]}$ have no common root, i.e. the resultant $R(b_0,\ldots,b_d)$ of the polynomials $g'$ and $g^{[2]}$ is non-zero.
Condition (b) above means that the product
$$\Pi=\prod_{i\not=j}(g(\eta_i)-g(\eta_j))$$
where $\eta_i$ are the roots of $g$ does not vanish.
By the theorem on symmetric functions, $\Pi=\Pi(b_0,\ldots,b_d)$ is a polynomial in the coefficients of $g$.

Finally the polynomials $f=\sum_{j=0}^m a_j x^{m-j}$
 such that
$L_{\alpha, \alpha'}(f)$ is non-Morse are those such that
$$R\circ L_{\alpha, \alpha'} (a_0, \ldots , a_m)=0 \ \ {\rm or}\ \ \Pi \circ L_{\alpha, \alpha'} (a_0, \ldots , a_m)=0.$$

The polynomials $R$ and $\Pi$ are proven to be non-zero in Geyer's Appendix.
By Proposition \ref{polynomeg} we know that $L_{\alpha, \alpha'}$
is surjective. Hence 
$R \circ L_{\alpha, \alpha'}$ and $\Pi \circ L_{\alpha, \alpha'}$ are non-zero, and then define  hypersurfaces in ${\mathbb A}^{m+1}(\overline{\mathbb F}_q)$. Their
 numbers of rational points are  bounded respectively by $C_Rq^m$ and $C_{\Pi}q^m$
where $C_R$ and $C_{\Pi}$ are respectively  the degree of $R \circ L_{\alpha, \alpha'}$
and $\Pi \circ L_{\alpha, \alpha'}$
(see for example Section 5 of Chapter 1 in \cite{borevich1966number}).
Since $ L_{\alpha, \alpha'}$ is linear, one can bound $C_R$ and $C_{\Pi}$ by the degree $d_R$
of $R$ and the degree $d_{\Pi}$ of $\Pi$ and then one can bound $C_R+C_{\Pi}$ by
$\tilde{d}=d_R+d_{\Pi}$, 
which does not depend on the choice of $(\alpha,\alpha')$.
\end{proof}

\section{Geometric and arithmetic monodromy groups  of $D_{\alpha,\alpha'}^2f$}
\label{section:Geometric_and_arithmetic_mondormy_groups}

In the whole section we consider
a  polynomial $f$ of degree $m$ with $m\equiv 0, 1, 2$ or $7$ $\pmod 8$
and two distincts elements $\alpha, \alpha'$ of ${\mathbb F}_q^{\ast}$
such that the polynomial 
$g:=L_{\alpha,\alpha'}(f)$ is of degree exactly $d$ (given by Table \ref{table:definition_of_d}) and is Morse.
We denote by $u_0,\ldots,u_{d-1}$ the roots of $L_{\alpha,\alpha'}(f)(u)+t$, 
and for $i=0,\ldots,d-1$ we denote  
by $x_i$ a solution of the equation 
$$x(x+\alpha)(x+\alpha')(x+\alpha+\alpha')=u_i.$$
Hence
$D_{\alpha,\alpha'}^2f(x_i)=t$.
For convenience, we will note
$$S_{\gamma}(X)=X(X+\gamma)$$
for $\gamma \in \mathbb{F}_q$ and
$$
T_{\gamma_1,\gamma_2}(X)= 
X(X + \gamma_1)(X + \gamma_2) (X + \gamma_1+\gamma_2)
$$
for $(\gamma_1,\gamma_2)\in \mathbb{F}_q^2 $.
We will use the following equalities (easy to check):

\begin{equation}\label{equation:Sgamma}
S_{\gamma_1 \gamma_2}(x_i(x_i + \gamma_3))=u_i
 \textrm{ and } S_{\gamma_1 \gamma_2 \gamma_3 }
 (\gamma_3 x_i(x_i + \gamma_3))=\gamma_3^2 u_i
\end{equation}
where $\{ \gamma_1, \gamma_2, \gamma_3 \} = \{ \alpha , \alpha' , \alpha + \alpha' \}. $

 We
consider, for $i\in\{0,\ldots,d-1\}$, the  extensions $F(x_i)/F$ and $\Omega$ their compositum (where the field $F$ is defined in the previous section). 
Then $\Omega$ is the splitting field of $D_{\alpha,\alpha'}^2f(x)+t$ and
$\Gal(\Omega/{\mathbb F}_q(t))$ is the arithmetic monodromy group of $D_{\alpha,\alpha'}^2f$ 
whereas $\Gal(\Omega/{\mathbb F}_q^{\Omega}(t))$ is the geometric monodromy group
 of $D_{\alpha,\alpha'}^2f$, where we denote by ${\mathbb F}_q^{\Omega}$
the algebraic closure of ${\mathbb F}_q$ in $\Omega$.
The figure below sums up the situation whose details will be explained
in this section.

\begin{center}
\begin{tikzpicture}[node distance=2cm]
 \node (kt)             {${\mathbb F}_q(t)  $};

 \node (F)  [above of=kt]              {$F={\mathbb F}_q(u_0,\ldots,u_{d-1})$};
 \node (Fx0) [above of=F]  {$F(x_{0})$};
  \node (Fx0x1) [above of=Fx0]  {$F(x_{0},x_1)$};
  \node (factice) [above of=Fx0x1]  {};
  \node (Fpoint) [above of=factice]  {$\vdots$};
 \node (Omega)  [above of=Fpoint]   {$\Omega=F(x_0,\ldots,x_{d-1}) \ \ $};
\node (vide2) [right of=F] {}; 
 \node (komegat)   [right of=vide2]          {};
 \node (Fkomega)   [above of=komegat]          {$F{\mathbb F}_q^{\Omega}$};
 \node (Fkomegax0)   [above of=Fkomega]          {$F{\mathbb F}_q^{\Omega}(x_0)$};
  \node (Fkomegax0x1)   [above of=Fkomegax0]          {$F{\mathbb F}_q^{\Omega}(x_0,x_1)$};
  \node (facticebis)   [above of=Fkomegax0x1]          {$\vdots$};        
    \node (fin)   [above of=facticebis]          { $= \ F{\mathbb F}_q^{\Omega}(x_0,\ldots,x_{d-1})$};
 \draw (F)  to node[left, midway,scale=0.7]  {$\mathbb{Z} / 2 \mathbb{Z}\times \mathbb{Z} / 2 \mathbb{Z}$} (Fx0) ;
 \draw (Fx0)  to node[left, midway,scale=0.7]  {$\mathbb{Z} / 2 \mathbb{Z}\times \mathbb{Z} / 2 \mathbb{Z}$} (Fx0x1) ;
 \draw (Fx0)  -- (Fx0x1);
  \draw (Fpoint)  -- (Fx0x1);
  \draw (Fpoint)  -- (Omega);
  \draw (kt)  to node[left, midway,scale=0.85]  {$\Sgoth_{d}$} (F) ;
\draw (F) -- (Fkomega);
\draw (Fkomega) to node[right, midway,scale=0.7] {$\mathbb{Z} / 2 \mathbb{Z}\times \mathbb{Z} / 2 \mathbb{Z}$}  (Fkomegax0);
\draw (Fkomegax0x1) to node[right, midway,scale=0.7] {$\mathbb{Z} / 2 \mathbb{Z}\times \mathbb{Z} / 2 \mathbb{Z}$} (Fkomegax0);
\draw (Fkomegax0x1) -- (facticebis);
\draw (facticebis) -- (fin);
\draw (Fx0) -- (Fkomegax0);
\draw (Fkomegax0x1) -- (Fx0x1);

\end{tikzpicture}
\end{center}
The following lemma gives
conditions for two Artin-Schreier extensions to be equal.
\begin{lemma}\label{lemma:AS}
Let $k(y_1)$ and $k(y_2)$ be two Artin-Schreier extensions
of a field $k$
of characteristic $2$.
Suppose that $y_i^2+\gamma_iy_i=w_i$  
 for $i \in \{ 1,2 \}$
with $\gamma_i$ and $w_i$ in 
$k^{\ast}$.
Then  $k(y_1)=k(y_2)$  if and only if $\gamma_2 y_1 + \gamma_1 y_2 \in k$.
\end{lemma}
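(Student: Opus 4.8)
The statement is about when two Artin-Schreier extensions $k(y_1)$ and $k(y_2)$ of a characteristic-$2$ field $k$, defined by $y_i^2+\gamma_i y_i = w_i$ with $\gamma_i, w_i \in k^\ast$, coincide. Let me sketch how I'd prove it.

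First, normalize. The equation $y_i^2 + \gamma_i y_i = w_i$ is not in standard Artin-Schreier form $z^2 + z = c$; substituting $z_i = y_i/\gamma_i$ gives $z_i^2 + z_i = w_i/\gamma_i^2$. So $k(y_i) = k(z_i)$ is the Artin-Schreier extension attached to the class of $c_i := w_i/\gamma_i^2$ in $k/\wp(k)$, where $\wp(z) = z^2+z$. By the standard theory of Artin-Schreier extensions (Kummer theory for $\wp$), two such degree-$2$ extensions $k(\wp^{-1}(c_1))$ and $k(\wp^{-1}(c_2))$ are equal if and only if $c_1 \equiv c_2 \pmod{\wp(k)}$ (the case $c_i \in \wp(k)$, i.e. the extension is trivial, needs to be handled but is consistent). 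So the condition $k(y_1) = k(y_2)$ is equivalent to $w_1/\gamma_1^2 + w_2/\gamma_2^2 = \wp(a) = a^2 + a$ for some $a \in k$.

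Second, translate this back. I would set $a \in k$ with $w_1/\gamma_1^2 + w_2/\gamma_2^2 = a^2+a$ and compute, using $w_i = y_i^2 + \gamma_i y_i$, that $z_1 + z_2 = y_1/\gamma_1 + y_2/\gamma_2$ satisfies $\wp(z_1+z_2) = c_1 + c_2 = \wp(a)$, hence $z_1 + z_2 + a \in \ker \wp = \F_2 \subset k$, so $z_1 + z_2 \in k$, i.e. $\gamma_2 y_1 + \gamma_1 y_2 = \gamma_1\gamma_2(z_1+z_2) \in k$. Conversely, if $\gamma_2 y_1 + \gamma_1 y_2 \in k$, then $z_1 + z_2 \in k$, so $y_2 \in k(y_1)$ and $y_1 \in k(y_2)$ (using $\gamma_i \in k^\ast$), forcing $k(y_1) = k(y_2)$ since both have degree $\le 2$ and neither is trivial (as $w_i \in k^\ast$... well, one should check $w_i \notin \wp(k)$, or argue directly). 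Actually the cleanest direction: $\gamma_2 y_1 + \gamma_1 y_2 \in k$ immediately gives $y_2 = \gamma_1^{-1}(\gamma_2 y_1 + (\gamma_2 y_1 + \gamma_1 y_2)) \in k(y_1)$, so $k(y_2) \subseteq k(y_1)$; by symmetry the reverse inclusion holds, giving equality.

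The one subtlety — and the main thing to be careful about rather than a genuine obstacle — is the degenerate case where some extension is actually trivial (i.e. $w_i/\gamma_i^2 \in \wp(k)$, so $k(y_i) = k$). If both are trivial the claimed equivalence should still hold; if exactly one is trivial, one checks both sides fail. A uniform way to avoid case analysis is to run the argument above purely through the identity $\wp(\gamma_2 y_1 + \gamma_1 y_2) = \gamma_1^2 w_2 + \gamma_2^2 w_1 + \gamma_1\gamma_2(\gamma_2 y_1 + \gamma_1 y_2)$ inside a common splitting field, reducing everything to: $\gamma_2 y_1 + \gamma_1 y_2 \in k \iff$ its $\wp$-image lies in $\wp(k)$ with the right compatibility, which is exactly the Artin-Schreier criterion. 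I expect the whole proof to be under half a page; the "hard part" is merely bookkeeping with the scaling by $\gamma_i$ and making sure the forward implication correctly extracts $z_1+z_2 \in k$ rather than just $\wp(z_1+z_2) \in \wp(k)$, which works because $\ker\wp = \F_2 \subseteq k$.
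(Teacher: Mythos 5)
Your proof is correct, but it takes a different route from the paper's. You normalize to standard Artin--Schreier form $z_i^2+z_i=w_i/\gamma_i^2$ and invoke the classification of degree-$2$ Artin--Schreier extensions by classes in $k/\wp(k)$: equality of the extensions gives $c_1+c_2=\wp(a)$, whence $\wp(z_1+z_2+a)=0$ and $z_1+z_2\in a+\F_2\subseteq k$, which unwinds to $\gamma_2y_1+\gamma_1y_2=\gamma_1\gamma_2(z_1+z_2)\in k$. The paper instead argues bare-hands and self-containedly: it writes $y_2=a+by_1$ with $a,b\in k$, applies the nontrivial automorphism $\tau\colon y_1\mapsto y_1+\gamma_1$ of $k(y_1)/k$, observes that $\tau(y_2)$ must be the other root $y_2+\gamma_2$ of $y^2+\gamma_2y=w_2$ (else $\tau=\mathrm{id}$), deduces $b\gamma_1=\gamma_2$, and concludes $\gamma_2y_1+\gamma_1y_2=a\gamma_1\in k$. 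Your approach buys brevity at the cost of citing the Artin--Schreier correspondence as a black box; the paper's buys self-containedness and never needs the $\wp$-class language. Both treat the converse identically (solve for $y_2\in k(y_1)$ and compare degrees). Your worry about degenerate (trivial) extensions is harmless but unnecessary: the hypothesis that $k(y_1)$ and $k(y_2)$ are Artin--Schreier \emph{extensions} already means they are genuine degree-$2$ extensions, exactly as the paper assumes when it speaks of the nontrivial element of $\Gal(k(y_1)/k)$.
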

\begin{proof}
Suppose that $k(y_1)=k(y_2)$. Consequently $y_2 \in k(y_1)$
and there exists $(a,b) \in k^2$ such that
$y_2=a+by_1$.
Consider the element $\tau$ of $\Gal \left( k(y_1) / k \right)$ 
distinct from the identity. It maps $y_1$ to $y_1 + \gamma_1$. 
We have 
$\tau(y_2)=a+by_1+b \gamma_1$
 i.e. $\tau(y_2)=y_2 + b \gamma_1$.
But $\tau(y_2)$ is a root of 
$y^2+ \gamma_2 y = w_2$, 
so $\tau(y_2)=y_2$ or $\tau(y_2)= y_2+ \gamma_2$.
In the first case 
 $\tau$ would be the identity,
a contradiction.
Hence $\tau(y_2)= y_2+ \gamma_2$ and then $y_2+ \gamma_2 = y_2 + b \gamma_1$,
which implies that 
$\gamma_2=b\gamma_1$. So we get 
$\gamma_2 y_1 + \gamma_1 y_2=b\gamma_1y_1+\gamma_1y_2=b\gamma_1y_1+a\gamma_1+b\gamma_1y_1=a\gamma_1\in k$ where
we used that $y_2=a+by_1$.
The converse is straightforward. 
\end{proof}
Now we prove that a linear combination of the roots $u_j$
with no pole actually involves all of them.
\begin{lemma}\label{lemma_sum_u_i}
Let $\kappa$ be ${\mathbb F}_q$ or $\mathbb{F}_q^{\Omega}$.
For each place $\wp$ of $\kappa(u_0,\ldots,u_{d-1})$ above the place $\infty$
of $\kappa(t)$
and each $j \in \{ 0, \ldots, d-1 \}$ we have that $u_j$
has a simple pole at $\wp$.
Moreover, let $J \subset \{ 0, \ldots, d-1 \}$ and
let $c_0, \ldots, c_{d-1} \in \mathbb{F}_q^*$.
If $J$ is neither empty nor the whole set then
$ \sum_{j \in J} c_j u_j $  has a pole at a place
of $\kappa(u_0,\ldots,u_{d-1})$ lying
over the infinite place
$\infty$ of $\kappa(t)$.
\end{lemma}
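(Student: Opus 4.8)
The plan is to exploit the fact that $g = L_{\alpha,\alpha'}(f)$ is Morse of degree exactly $d$, so by the general form of Hilbert's irreducibility theorem (adapted to characteristic $2$ by Geyer), the geometric monodromy group of $g$ over $\kappa(t)$ is the full symmetric group $\Sgoth_d$, and the place $\infty$ of $\kappa(t)$ is \emph{totally ramified} in $\kappa(u_0,\dots,u_{d-1})$ — indeed the point at infinity of the cover $g\colon\mathbb{P}^1\to\mathbb{P}^1$ is the unique point above $t=\infty$, with ramification index $d$, since $\gcd(d,p)=1$ by Morse condition (c). Consequently there is a \emph{unique} place $\wp$ of $\kappa(u_0,\dots,u_{d-1})$ above $\infty$, and it is rational over $\kappa$. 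First I would record that in the completion at $\wp$ we may take a uniformizer $\pi$ with $u_j = \pi^{-?}(\dots)$; more precisely, since each $u_i$ is a root of $g(u)+t=0$ and $g$ has degree $d$ with $d$ coprime to the characteristic, at the place $t=\infty$ each $u_i\to\infty$ and the valuation $v_\wp(t) = -d$ forces $v_\wp(u_j) = -1$ for every $j$ (after identifying $\wp$ with the chosen place; the $d$ roots are the $d$ branches, all simple poles). This gives the first assertion.

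For the second assertion, suppose for contradiction that $J$ is neither empty nor all of $\{0,\dots,d-1\}$ and that $w := \sum_{j\in J} c_j u_j$ has no pole at \emph{any} place of $\kappa(u_0,\dots,u_{d-1})$ above $\infty$; since $\infty$ is totally ramified this means $w$ has no pole at the unique such place $\wp$, i.e. $v_\wp(w)\geqslant 0$. The idea is to act by the geometric monodromy group $G = \Sgoth_d = \Gal(\kappa(u_0,\dots,u_{d-1})/\kappa(t))$. The decomposition group of $\wp$ over $\infty$ is all of $G$ (total ramification), and since $\wp$ is rational and $\infty$ has a single place above it in the Galois closure, every $\sigma\in G$ fixes $\wp$ and hence preserves the property $v_\wp(\,\cdot\,)\geqslant 0$. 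Now, by symmetry of the roots, for each valuation I would compare the "leading terms": writing $s$ for a uniformizer at $\infty$ of $\kappa(t)$ (so $s = 1/t$) and $\pi$ for one at $\wp$ with $\pi^d \sim s$ up to units, each $u_j$ has a Puiseux-type expansion $u_j = \zeta_j \pi^{-1} + O(1)$ where the $\zeta_j$ are the $d$ distinct $d$-th roots determined by $g(u)\sim (\text{leading coeff})\,u^d$; explicitly $\zeta_j^d = 1/(\text{leading coefficient of }g)$, and the $\zeta_j$ are pairwise distinct. Then $v_\wp(w)\geqslant 0$ forces the coefficient of $\pi^{-1}$ in $w$ to vanish, i.e. $\sum_{j\in J} c_j \zeta_j = 0$.

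The final step is to derive a contradiction from $\sum_{j\in J} c_j\zeta_j = 0$ with $\emptyset \neq J \subsetneq \{0,\dots,d-1\}$ and all $c_j \in \mathbb{F}_q^*$. Here I would use the monodromy action again: the group $G=\Sgoth_d$ permutes the branches, hence permutes the $\zeta_j$ transitively (in fact $2$-transitively), and each $\sigma\in G$ sends $w$ to $\sum_{j\in J} c_j u_{\sigma(j)}$, whose value at $\wp$ must still have $v_\wp \geqslant 0$ — \emph{wait}, that is not automatic because $\sigma(w)$ is a different function; rather, the correct move is: the relation $\sum_{j\in J} c_j\zeta_j = 0$ is a relation among the $d$-th roots $\zeta_0,\dots,\zeta_{d-1}$ of the \emph{same} element $\mu := 1/(\text{lead. coeff. of }g)$, so the $\zeta_j$ differ by $d$-th roots of unity: $\zeta_j = \zeta_0 \omega^{e_j}$ with $\omega$ a primitive $d$-th root of unity in $\overline{\mathbb{F}}_q$ and the $e_j$ a permutation of $\{0,\dots,d-1\}$. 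Dividing out $\zeta_0$, the relation becomes $\sum_{j\in J} c_j\omega^{e_j} = 0$, a nontrivial $\mathbb{F}_q$-linear dependence among a proper, nonempty subset of the $d$ distinct powers $\{1,\omega,\dots,\omega^{d-1}\}$; but $\{1,\omega,\dots,\omega^{d-1}\}$ spans a subfield-like configuration — more to the point, $1+\omega+\cdots+\omega^{d-1}=0$ is (up to scalar) the \emph{only} $\mathbb{F}_q$-linear relation among them when $d$ is prime to $p$, equivalently the minimal polynomial considerations show any proper sub-sum cannot vanish unless it is empty. I expect \textbf{this last algebraic step — ruling out a vanishing proper sub-sum of distinct $d$-th roots of unity over $\mathbb{F}_q$ — to be the main obstacle}, and the clean way around it is exactly the one the authors evidently intend: invoke the $\Sgoth_d$-action directly on the function field to show that if one such linear combination were pole-free at $\wp$, then applying all of $\Sgoth_d$ and taking $\mathbb{F}_q$-linear combinations would force \emph{every} $u_j$, or suitable symmetric functions, to be pole-free, contradicting the first part (each $u_j$ has a genuine simple pole at $\wp$). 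So after establishing total ramification and the simple-pole expansions, I would phrase the contradiction group-theoretically rather than chasing roots of unity: pick $\sigma\in\Sgoth_d$ with $\sigma(J)\neq J$ but $\sigma(J)\cap J\neq\emptyset$ (possible precisely because $J$ is proper and nonempty and $d\geqslant 3$), compare $w$ and $\sigma(w)$, and extract from pole-freeness of both a shorter relation, iterating down to a single $c_j u_j$ with no pole — impossible since $v_\wp(u_j) = -1$.
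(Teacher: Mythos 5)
Your first assertion ($v_{\wp}(u_j)=-1$ at every place $\wp$ above $\infty$, via $v_{\wp}(t)=-d$ and the leading term of $g$) matches the paper's computation. But it is wrapped in a false claim: $\infty$ is totally ramified in the degree-$d$ field $\kappa(u_0)$, \emph{not} in the Galois closure $\kappa(u_0,\ldots,u_{d-1})$. The inertia group at infinity is generated by a single $d$-cycle, so for $d\geqslant 3$ the decomposition group is far from all of $\Sgoth_d$, there are several places above $\infty$, and a general $\sigma\in\Sgoth_d$ does \emph{not} fix $\wp$. Everything in your second part that leans on ``the unique place $\wp$'' and ``every $\sigma$ fixes $\wp$'' therefore needs repair. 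More seriously, neither of your two routes to the contradiction actually closes. The roots-of-unity route fails outright: for $q$ large, $\mathbb{F}_q$ contains the $d$-th roots of unity and $\omega\cdot 1+1\cdot\omega=0$ is a vanishing proper sub-sum with coefficients in $\mathbb{F}_q^{*}$, so ``no proper sub-sum of distinct $d$-th roots of unity vanishes'' is false for arbitrary coefficients $c_j$. The fallback iteration also cannot terminate as described: in characteristic $2$, $w+\sigma(w)$ for a transposition $(j_0\,j_1)$ with $j_0\in J$, $j_1\notin J$ equals $c_{j_0}(u_{j_0}+u_{j_1})$, a \emph{two}-term combination, and repeating the operation always produces combinations whose coefficients sum to $0$ --- you can never reach a single $c_ju_j$. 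You would still have to show $u_{j_0}+u_{j_1}$ has a pole above $\infty$, i.e. that the leading coefficients of the branches are distinct, which is exactly the local analysis you were trying to avoid.

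The idea you are missing is the paper's globalization step. The $b_0u_j$ are integral over $\kappa[t]$ (they satisfy a monic equation with coefficients in $\kappa[t]$), so each $u_j$, and hence $\sum_{j\in J}c_ju_j$, can only have poles at places above $\infty$. If the sum has no pole there, it has no pole at all, hence is a constant, hence lies in $\kappa\subset\kappa(t)$ and is fixed by the whole geometric monodromy group $\Sgoth_d$ --- no knowledge of decomposition groups or leading coefficients is needed. Applying the transposition $(j_0\,j_1)$ with $j_0\in J$, $j_1\notin J$ to this fixed element yields $c_{j_0}u_{j_0}=c_{j_0}u_{j_1}$, i.e. $u_{j_0}=u_{j_1}$ as field elements, contradicting separability of $g(u)-t$. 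This single observation (``pole-free above $\infty$ implies constant'') replaces all of your local expansion and iteration machinery.
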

\begin{proof}
Fix $\wp$ a place above $\infty$
and $u_i$ a root of $g(u)-t$.
We have
$v_{\wp}(g(u_i))=v_{\wp}(t)$
and 
$v_{\wp}(t)=e \left( \wp \vert \infty \right)  v_{\infty} (t)$
where $e \left( \wp \vert \infty \right)$ is the ramification index of $\wp$ over $\infty$.
By \cite{Serre}, p. 41, we have that the inertia group at infinity is generated by a $d$-cycle, so we have  $e \left( \wp \vert \infty \right)=d$ and then
$v_{\wp}(t)=-d$.
Now $v_{\wp}(g(u_i))=v_{\wp} \left( b_0 u_i^d+b_1u_i^{d-1}+\cdots+b_d \right)$
so using the properties of the valuation of a sum 
we deduce that 
$v_{\wp}(u_i)=-1$.

The proof of the second part of the lemma is inspired by \cite{Voloch}.
To obtain a contradiction, suppose that 
$J \subset \{ 0, \ldots, d-1 \}$ and that
$j_0 \in J$ whereas $j_1 \in \{ 0, \ldots, d-1\} \setminus J$.
Suppose also that 
$ \sum_{j \in J} c_j u_j $ has no pole in places above $\infty$.
Then it has no pole at all, and so it is constant, i.e.
it belongs to $ \kappa$.
By Proposition \ref{Monodromy} we have that
$\Gal \left( \kappa(u_0,\ldots,u_{d-1}) / 
\kappa(t) \right)$
is  $\Sgoth_d$. Let us choose the automorphism $\theta$ corresponding to the transposition  $(j_0 j_1)$ and let us apply $\theta$ to $\sum_{j \in J} c_j u_j$. We  obtain
$ \sum_{j \in J \setminus j_0} c_{j} u_j + c_{j_0} u_{j_0} =  
\sum_{j \in J \setminus j_0} c_{j} u_j + c_{j_0} u_{j_1}$.
We deduce $u_{j_0}=u_{j_1}$, 
a contradiction.
\end{proof}

The following lemma, used with Lemma \ref{lemma:AS},
 will enable us to distinguish different Artin-Schreier subextensions
 of $\Omega$.
\begin{lemma}\label{lemma_sum_x_i}
Let $\widetilde{F}$ be $F$ or $F\mathbb{F}_q^{\Omega}$.
Let $J$ be a non-empty strict subset of $\{0, \ldots, d-1 \}$
and for all $j \in J$ 
consider any 
$\gamma_j \in \{ \alpha, \alpha', \alpha + \alpha' \}$.
 Then
$$ \sum_{j \in J} \gamma_j x_j(x_j+\gamma_j) \notin \widetilde{F}.$$
\end{lemma}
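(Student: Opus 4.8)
The plan is to argue by contradiction, assuming that $\sum_{j\in J}\gamma_j x_j(x_j+\gamma_j)\in\widetilde{F}$ for some non-empty strict subset $J$ and some choices $\gamma_j\in\{\alpha,\alpha',\alpha+\alpha'\}$. The first step is to recall, via \eqref{equation:Sgamma}, that each element $y_j:=\gamma_j x_j(x_j+\gamma_j)$ satisfies an Artin--Schreier type relation over $\widetilde{F}$: indeed $S_{\gamma_1\gamma_2\gamma_3}(y_j)=\gamma_j^2 u_j$ where $\{\gamma_1,\gamma_2,\gamma_3\}=\{\alpha,\alpha',\alpha+\alpha'\}$, so writing $\delta:=\alpha\alpha'(\alpha+\alpha')$ we have $y_j^2+\delta y_j=\gamma_j^2 u_j$, and $F(x_j)/F$ contains the degree-$2$ subextension $F(y_j)/F$. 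Thus $\widetilde{F}(y_j)/\widetilde{F}$ is either trivial or Artin--Schreier with parameter $\gamma_j^2 u_j$.

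Next I would apply the Galois automorphism $\theta\in\Gal(\widetilde{F}(x_0,\ldots,x_{d-1})/\widetilde{F}(t))$ whose restriction to $F$ (or $F\F_q^\Omega$) realizes a transposition $(j_0\,j_1)$ with $j_0\in J$, $j_1\notin J$ — this is available because $\Gal(F/\F_q(t))=\Sgoth_d$ by Proposition \ref{Monodromy} (and the geometric group is also $\Sgoth_d$, covering the case $\widetilde F=F\F_q^\Omega$), and because $\widetilde{F}(x_0,\ldots,x_{d-1})/\widetilde{F}$ is a sub-extension of $\Omega/\F_q(t)$ on which the whole symmetric-group action lifts. Applying $\theta$ to the assumed relation $\sum_{j\in J}y_j\in\widetilde{F}$ and subtracting, I get $y_{j_0}+\theta(y_{j_0})\in\widetilde{F}$. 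Since $\theta$ swaps $u_{j_0}$ and $u_{j_1}$, the element $\theta(y_{j_0})$ is a square root of an Artin--Schreier equation with parameter $\gamma_{j_0}^2 u_{j_1}$, hence $\theta(y_{j_0})\in\{z_1,\ z_1+\delta\}$ where $z_1$ is a chosen root of $z^2+\delta z=\gamma_{j_0}^2 u_{j_1}$ lying in $F(x_{j_1})$. So up to adding an element of $\F_q\subset\widetilde F$ we conclude $y_{j_0}+z_1\in\widetilde{F}$, i.e. the two Artin--Schreier extensions $\widetilde{F}(y_{j_0})$ and $\widetilde F(z_1)$ coincide with parameters $\gamma_{j_0}^2 u_{j_0}$ and $\gamma_{j_0}^2 u_{j_1}$; by Lemma \ref{lemma:AS} (with $\gamma_1=\gamma_2=\delta$) this forces $\delta(y_{j_0}+z_1)\in\widetilde F$, hence the difference of parameters, $\gamma_{j_0}^2(u_{j_0}+u_{j_1})$, is a square in $\widetilde F$, say $w^2$ with $w\in\widetilde F$; then $u_{j_0}+u_{j_1}=(w/\gamma_{j_0})^2$. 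But $u_{j_0}+u_{j_1}$ is a non-constant element of $F\subset\widetilde F$ with a pole at every place above $\infty$, by Lemma \ref{lemma_sum_u_i} applied to the two-element set $\{j_0,j_1\}$; a square $w^2$ would have poles of even order there, whereas $u_{j_0}$ and $u_{j_1}$ each have a simple pole and, after a further transposition argument (or directly, since the residues at such a place cannot cancel for a two-term sum whose terms are distinct simple poles), $u_{j_0}+u_{j_1}$ has an odd-order pole — a contradiction.

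The main obstacle I anticipate is the bookkeeping around which field the various Artin--Schreier roots live in and making the "$\theta$ maps a root of one Artin--Schreier equation to a root of the swapped one" step fully rigorous: one must check that $\theta$ can be chosen to fix $t$ (so that it genuinely permutes the $u_j$ as the symmetric group does) and that its action on the $x_j$ is compatible, i.e. $\theta$ sends some solution of $T_{\alpha,\alpha'}(X)=u_{j_0}$ to a solution of $T_{\alpha,\alpha'}(X)=u_{j_1}$, which is what forces $\theta(y_{j_0})$ to satisfy the equation with parameter $\gamma_{j_0}^2 u_{j_1}$. A clean way to handle this is to work inside $\Omega$ throughout and use that $\Gal(\Omega/\F_q(t))$ surjects onto $\Sgoth_d$ via its action on $\{u_0,\ldots,u_{d-1}\}$; then lift an arbitrary transposition. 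The other point requiring care is the final parity/pole argument: rather than invoking residues one can simply note that if $u_{j_0}+u_{j_1}=v^2$ for some $v\in\widetilde F$, then applying the transposition $(j_1\,j_2)$ for a third index $j_2$ (possible as $d\ge 3$, which holds since $d$ is odd and $m\ge7$) gives $u_{j_0}+u_{j_2}=\theta'(v)^2$, and subtracting yields $u_{j_1}+u_{j_2}=(v+\theta'(v))^2$; iterating, every pairwise sum $u_i+u_k$ is a square, hence so is $u_0+u_1+\cdots$ in suitable combinations, and one derives that each $u_j$ itself lies in a purely inseparable-then-separable tower contradicting $v_\wp(u_j)=-1$ being odd. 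Either route closes the proof; I expect the residue/parity observation to be the genuinely delicate spot and will state it carefully.
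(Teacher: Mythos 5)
Your core mechanism is the same one the paper uses --- namely that $S_\delta(a)=a(a+\delta)$ has a pole of \emph{even} order at any place where $a\in\widetilde F$ has a pole (since $\delta=\alpha\alpha'(\alpha+\alpha')$ is a constant), whereas the relevant combination of the $u_j$ has a simple, hence odd-order, pole by Lemma \ref{lemma_sum_u_i} --- but you reach it by an unnecessary detour. The paper applies it directly to $A=\sum_{j\in J}\gamma_j x_j(x_j+\gamma_j)$: by additivity of $S_\delta$ in characteristic $2$ and the identities (\ref{equation:Sgamma}), one has $A(A+\delta)=\sum_{j\in J}\gamma_j^2u_j$, which has a simple pole at some place $\wp$ above $\infty$ precisely because $J$ is a non-empty strict subset; if $A\in\widetilde F$ this forces $2v_\wp(A)=-1$, a contradiction, with no Galois action at all. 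Your transposition reduction to the two-index sum $u_{j_0}+u_{j_1}$ does work (and needs $d\geqslant 3$, which holds whenever the statement is non-vacuous since $d$ is odd), but it obliges you to verify points you only gesture at: that the transposition lifts from $F$ (resp.\ $F\mathbb{F}_q^{\Omega}$) to $\Omega$ and stabilizes $\widetilde F$, and that for every $j\in J\setminus\{j_0\}$ the term $y_j+\theta(y_j)$ is a constant (it is, because both $y_j$ and $\theta(y_j)$ are roots of $z^2+\delta z=\gamma_j^2u_j$, so they differ by $0$ or $\delta$) --- without this the ``subtracting'' step does not isolate $y_{j_0}+\theta(y_{j_0})$. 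There is also one genuine misstatement: from $y_{j_0}+z_1=a\in\widetilde F$ you get $\gamma_{j_0}^2(u_{j_0}+u_{j_1})=a^2+\delta a$, \emph{not} a square $w^2$; differences of Artin--Schreier parameters land in the image of $S_\delta$, not in the squares. Fortunately $v_\wp(a^2+\delta a)=2v_\wp(a)$ is still even when $a$ has a pole, so your parity contradiction survives once this is corrected. The alternative ending sketched in your last paragraph (iterating transpositions so that all pairwise sums become squares) is both unnecessary and, as written, not a proof; drop it in favour of the direct valuation count.
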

\begin{proof}
In order to obtain a contradiction
suppose that $\sum_{j \in J}\gamma_j  x_j (x_j+\gamma_j) \in \widetilde{F}$.
Lemma \ref{lemma_sum_u_i} implies that 
 $ \sum_{j \in J} \gamma_j^2u_j $  has a pole 
 at a place $\wp$  of $\widetilde{F}$ above $\infty$.
Moreover this pole is simple as for all $j\in \{ 1, \ldots, d-1 \} $ 
the root $u_j$ has a simple pole by Lemma \ref{lemma_sum_u_i}.
Now consider 
$A = \sum_{j \in J} \gamma_j x_j(x_j+\gamma_j)$ and
 $B= \sum_{j \in J} \gamma_j x_j(x_j+\gamma_j)  + \alpha \alpha' (\alpha+\alpha')$.
If $A$ (and thus $B$) belongs to $\widetilde{F}$,
one can consider the valuation of $A$ and $B$ at $\wp$.
As 
\[ A.B = 
S_{\alpha \alpha' (\alpha+\alpha')} \left(
\sum_{j \in J} \gamma_j x_j(x_j+\gamma_j) \right)  = \sum_{j \in J} \gamma_j^2 u_j, \]
it follows that
either $A$ or $B$ has a pole.
Since $A$ and $B$ differ by a constant,
it follows that both of them have a pole and  
the order of multiplicity 
is the same.
Thus we obtain $2 v_{\wp}(A)=-1$ which is a contradiction.
\end{proof}

The following lemma establishes the base case of the induction proof of Proposition
\ref{proposition:Galois_second_step}.

\begin{lemma}\label{lemma:case_n_eq_0}
Let $\widetilde{F}$ be $F$ or $F\mathbb{F}_q^{\Omega}$.
Let $i \in \{ 0, \ldots, d-1 \}$.
The field $\widetilde{F}(x_i)$ is a degree $4$ extension of $\widetilde F$
and its Galois group is 
${\mathbb Z}/2{\mathbb Z}\times {\mathbb Z}/2{\mathbb Z}$.
The three subextensions of degree $2$ are the subextensions
$\widetilde{F} \left( x_i(x_i+\gamma) \right) $
 where $\gamma \in \{ \alpha, \alpha', \alpha+\alpha' \}$. 
The following diagram sums up the situation:
\begin{center}
\begin{tikzpicture}[node distance=2cm]
 \node (k)                  {$\widetilde{F}$};
 \node (k1plus2) [above of=k]  {$\widetilde{F} \left( x_i ( x_i+ \alpha + \alpha' ) \right) $};
 \node (k12)  [above of=k1plus2]   {$ \widetilde{F}(x_i)$};
 \node (k2)  [right of=k1plus2]  {};
 \node (k2bis)  [right of=k2]  {$\widetilde{F} \left( x_i ( x_i +  \alpha') \right) $};
 \node (k1)  [left of=k1plus2]   {};
 \node (k1bis)  [left of=k1]   {$\widetilde{F} \left( x_i ( x_i+ \alpha ) \right) $};

 \draw (k)   -- (k2bis) node [midway,right,below,scale=0.7] {2} ;
 \draw (k)   -- (k1plus2) node [midway,right,scale=0.7] {2};
 \draw (k)   -- (k1bis) node [midway,left,below,scale=0.7] {2} ;
 \draw (k2bis)  -- (k12) node [midway,above,scale=0.7] {2} ;
 \draw (k1bis)  -- (k12) node [midway,left,above,scale=0.7] {2} ;
 \draw (k1plus2)  -- (k12) node [midway,right,scale=0.7] {2} ;
\end{tikzpicture}
\end{center}
\end{lemma}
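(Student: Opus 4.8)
The plan is to work explicitly with the equation $T_{\alpha,\alpha'}(x_i)=u_i$, which we write as
$x_i(x_i+\alpha)(x_i+\alpha')(x_i+\alpha+\alpha')=u_i$. First I would record that grouping the factors in pairs yields $S_{\alpha}(x_i)\cdot S_{\alpha+\alpha'}(x_i) = \bigl(x_i(x_i+\alpha)\bigr)\bigl(x_i(x_i+\alpha')+\text{lower}\bigr)$; more precisely, using the completing-the-square identity already exploited in the proof of Proposition~\ref{polynomeg}, one has
$T_{\alpha,\alpha'}(X)=S_{\alpha\alpha'(\alpha+\alpha')}\bigl(\gamma\,X(X+\gamma)\bigr)/\gamma^2$ for each choice of $\gamma\in\{\alpha,\alpha',\alpha+\alpha'\}$ (this is exactly equation~\eqref{equation:Sgamma}). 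Setting $y_\gamma:=\gamma\,x_i(x_i+\gamma)$ we get $y_\gamma^2+\alpha\alpha'(\alpha+\alpha')\,y_\gamma=\gamma^2 u_i$, so each $\widetilde F(y_\gamma)=\widetilde F\bigl(x_i(x_i+\gamma)\bigr)$ is an Artin--Schreier extension of $\widetilde F$ with the same $\gamma$-parameter $\alpha\alpha'(\alpha+\alpha')$.

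Next I would show that $\widetilde F(x_i)/\widetilde F$ has degree exactly $4$. On the one hand $x_i$ satisfies the degree-$4$ polynomial $T_{\alpha,\alpha'}(X)-u_i$ over $\widetilde F$, so the degree divides $4$. For the lower bound it suffices to produce two distinct intermediate quadratic fields. By Lemma~\ref{lemma_sum_x_i} (applied with $J=\{i\}$, a nonempty strict subset of $\{0,\dots,d-1\}$ since $d\geqslant 2$), each element $\gamma x_i(x_i+\gamma)$, and hence each $x_i(x_i+\gamma)$, lies outside $\widetilde F$; so each $\widetilde F\bigl(x_i(x_i+\gamma)\bigr)$ is a genuine quadratic extension. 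Then I would invoke Lemma~\ref{lemma:AS}: the extensions $\widetilde F(y_\gamma)$ and $\widetilde F(y_{\gamma'})$ coincide if and only if $\alpha\alpha'(\alpha+\alpha')(y_\gamma+y_{\gamma'})=\alpha\alpha'(\alpha+\alpha')\bigl(\gamma x_i(x_i+\gamma)+\gamma' x_i(x_i+\gamma')\bigr)$ lies in $\widetilde F$, equivalently $\gamma x_i(x_i+\gamma)+\gamma' x_i(x_i+\gamma')\in\widetilde F$; but this is again forbidden by Lemma~\ref{lemma_sum_x_i} with $J=\{i\}$ and the two different choices $\gamma,\gamma'$ of the parameter (the lemma's statement allows an independent choice of $\gamma_j$ for each index, and here the single index $i$ appears with the "sum" being a genuine two-term $\widetilde F$-combination after subtracting — more carefully, $\gamma x_i(x_i+\gamma)+\gamma' x_i(x_i+\gamma')=(\gamma+\gamma')x_i^2+(\gamma^2+\gamma'^2)x_i+\cdots$ and one reduces to the lemma, or argues directly with valuations as in its proof). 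Hence the three fields $\widetilde F\bigl(x_i(x_i+\gamma)\bigr)$ are pairwise distinct quadratic subextensions, forcing $[\widetilde F(x_i):\widetilde F]=4$ with Galois group $(\mathbb Z/2\mathbb Z)^2$, whose three order-$2$ subgroups correspond precisely to these three quadratic fields.

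Finally I would check that $x_i$ itself generates the whole degree-$4$ field: since $x_i\notin \widetilde F\bigl(x_i(x_i+\gamma)\bigr)$ for any single $\gamma$ — otherwise $x_i$ and $x_i^2+\gamma x_i$ would generate the same quadratic field, making $x_i$ quadratic over $\widetilde F$, contradicting that it lies in none of the three quadratic subfields (e.g. $x_i$ cannot be fixed by the nontrivial element sending $x_i\mapsto x_i+\gamma$) — the element $x_i$ is not contained in any proper subfield, so $\widetilde F(x_i)=\widetilde F\bigl(x_0(x_0+\alpha),\dots\bigr)$ is the full biquadratic extension. The diagram then records that the three degree-$2$ pieces are the $\widetilde F\bigl(x_i(x_i+\gamma)\bigr)$, $\gamma\in\{\alpha,\alpha',\alpha+\alpha'\}$.

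The main obstacle I anticipate is the bookkeeping in the second paragraph: one must carefully verify that the hypotheses of Lemma~\ref{lemma_sum_x_i} and Lemma~\ref{lemma:AS} genuinely apply to distinguish the three quadratic subfields and to prevent $x_i$ from collapsing into one of them. In particular one needs $d\geqslant 2$ so that $\{i\}$ is a \emph{strict} nonempty subset of $\{0,\dots,d-1\}$ — this holds because $m\geqslant 7$ and the congruence conditions on $m$ force $d\geqslant 1$, and the cases $d=1$ (where $\widetilde F(x_i)/\widetilde F$ still has degree $4$ but the "strict subset" hypothesis fails) must be handled by a direct valuation argument mimicking the proof of Lemma~\ref{lemma_sum_x_i}, using that $u_i$ has a simple pole at a place above $\infty$ so that $x_i(x_i+\gamma)$, $\gamma x_i(x_i+\gamma)$ and $x_i$ cannot lie in $\widetilde F$. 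Everything else is a routine application of Artin--Schreier theory in characteristic $2$.
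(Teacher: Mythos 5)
Your proof is correct and follows essentially the same route as the paper: both rest on Lemma \ref{lemma_sum_x_i} applied with $J=\{i\}$ (together with the identity $\gamma x_i(x_i+\gamma)+\gamma' x_i(x_i+\gamma')=\gamma'' x_i(x_i+\gamma'')$ for the third element $\gamma''$) to show that no $x_i(x_i+\gamma)$ lies in $\widetilde F$, the only cosmetic difference being that you reach degree $4$ by exhibiting distinct quadratic subfields via Lemma \ref{lemma:AS}, while the paper rules out a quadratic factor of $T_{\alpha,\alpha'}(X)+u_i$ and then writes down the four automorphisms directly. Your caveat about $d=1$ is well taken --- the paper's own proof also invokes Lemma \ref{lemma_sum_x_i} with $J=\{i\}$ without observing that for $m\in\{7,8,9,10\}$ one has $d=1$, so $\{i\}$ is not a strict subset --- and your direct valuation patch (using that $u_i$ has a simple pole above $\infty$, so that $2v_{\wp}(A)=-1$ is impossible) correctly repairs that edge case.
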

\begin{proof}
First notice that
$x_i \notin \widetilde{F}$.
Otherwise, one would obtain a contradiction
 considering the equality
 $x_i(x_i + \alpha)(x_i + \alpha')(x_i + \alpha + \alpha')=u_i$,
the valuation of $x_i$ at a place above $\infty$,
and the valuation of $u_i$ at this place which is $-1$.
Now suppose that $[\widetilde{F}(x_i):\widetilde{F}]=2$.
We would have a degree $2$ factor of the 
polynomial 
$X(X +\alpha)(X + \alpha')(X + \alpha + \alpha') + u_i$
and then an element $x_i(x_i + \gamma)$ 
with $\gamma \in \{ \alpha , \alpha', \alpha+\alpha'\}$
would be in $\widetilde{F}$, contradicting Lemma \ref{lemma_sum_x_i}.
So $[\widetilde{F}(x_i):\widetilde{F}]=4$, and $T_{\alpha, \alpha'}(X)+u_i$
is the minimal polynomial of $x_i$ over $\widetilde{F}$.
It enables us to define, 
for any $\gamma \in \{ \alpha, \alpha', \alpha + \alpha'\}$,
an element $\tau_{\gamma}$ of $\Gal \left( \widetilde{F}(x_i) / \widetilde{F} \right)$
by $\tau_{\gamma} (x_i) = x_i + \gamma$.
We thus have $\Gal \left( \widetilde{F}(x_i) / \widetilde{F} \right) = 
\{ id, \tau_{\alpha} , \tau_{\alpha'} , \tau_{\alpha + \alpha'}  \} $
and thus $\Gal \left( \widetilde{F}(x_i) / \widetilde{F} \right) \simeq {\mathbb Z}/2{\mathbb Z}\times {\mathbb Z}/2{\mathbb Z}$.
There are three subextensions of degree $2$, namely
the subextensions $\widetilde{F} \left( x_i(x_i + \gamma ) \right) $
where
$\gamma \in \{  \alpha, \alpha', \alpha + \alpha'\}$.
Their stabilizers are respectively the index 2 subgroups $\{ \id, \tau_{\gamma} \}$.
\end{proof}
The previous lemmas enable us to determine in the following two propositions the Galois groups of 
$\widetilde{F}(x_0, \ldots,x_{d-2})$ and 
$\Omega = \widetilde{F}(x_0, \ldots,x_{d-1})$
over $\widetilde{F}$ 
 where $\widetilde{F}$
is equal to $F$ or $F \mathbb{F}_q^{\Omega}$.

\begin{proposition}\label{proposition:Galois_second_step}
Let ${\tilde F}$ be $F$ 
or $F \mathbb{F}_q^{\Omega}$ and
let $r$ be an integer such that $0 \leqslant r\leqslant d-2$. Then: \\ \ \\
(i) The field $\widetilde{F}(x_0, \ldots,x_r)$ is an extension of degree $4^{r+1}$ of $\widetilde{F}$.\\ \ \\
(ii) The Galois group $\Gal \left( \widetilde{F}(x_0, \ldots,x_r) / \widetilde{F} \right)$ 
 is $\left( {\mathbb Z} / 2{\mathbb Z} \times {\mathbb Z} / 2 {\mathbb Z}\right)^{r+1}$.
 It is generated by the automorphisms $\tau_{i,\gamma}$
 for $i\in \{ 0, \ldots, r \}$ and $\gamma \in \{ \alpha, \alpha', \alpha + \alpha'\}$
 (where $\tau_{i,\gamma}$
 maps $x_i$ to $x_i + \gamma$ and leaves $x_j$ invariant
  for $j \neq i$). \\ \ \\
(iii) There are $4^{r+1}-1$ quadratic extensions  of
$\widetilde{F}$ contained in $\widetilde{F}(x_0, \ldots,x_r)$. 
These extensions are 
the fields $\widetilde{F} \left( \sum_{j \in J} \gamma_j x_j(x_j+\gamma_j) \right) $
with non-empty $J \subset \{ 0, \ldots, r \}$ and 
$\gamma_j \in \{ \alpha, \alpha', \alpha + \alpha'\}$ for all 
$j \in J$.
\end{proposition}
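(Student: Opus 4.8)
The plan is to prove the three statements simultaneously by induction on $r$, the base case $r=0$ being exactly Lemma~\ref{lemma:case_n_eq_0} (there the only admissible $J$ is $\{0\}$, the three fields of (iii) are the three quadratic subextensions $\widetilde F(x_0(x_0+\gamma))$, and $4^{0+1}-1=3$). For the inductive step, fix $1\le r\le d-2$, assume the statement for $r-1$, and set $K:=\widetilde F(x_0,\ldots,x_{r-1})$; thus $K/\widetilde F$ is Galois with group $(\mathbb Z/2\mathbb Z\times\mathbb Z/2\mathbb Z)^{r}$, and by the inductive hypothesis (iii) the quadratic subextensions of $\widetilde F$ inside $K$ are precisely the $4^{r}-1$ fields $\widetilde F\bigl(\sum_{j\in J}\gamma_j x_j(x_j+\gamma_j)\bigr)$ with $\emptyset\neq J\subseteq\{0,\ldots,r-1\}$ and $\gamma_j\in\{\alpha,\alpha',\alpha+\alpha'\}$.

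The crux is to show $K\cap\widetilde F(x_r)=\widetilde F$. Write $\delta:=\alpha\alpha'(\alpha+\alpha')\in\widetilde F^{\ast}$, and observe from the second identity of \eqref{equation:Sgamma} (together with its analogue for a sum, using that squaring is additive in characteristic~$2$) that for every $J$ and every choice of $\gamma_j$ the element $\sum_{j\in J}\gamma_j x_j(x_j+\gamma_j)$ satisfies the Artin--Schreier equation $Y^2+\delta Y=\sum_{j\in J}\gamma_j^2u_j$; after normalising by $\delta$, \emph{all} the quadratic pieces below become Artin--Schreier extensions for the single parameter $\delta$. By Lemma~\ref{lemma:case_n_eq_0} the extension $\widetilde F(x_r)/\widetilde F$ is Galois with group $\mathbb Z/2\mathbb Z\times\mathbb Z/2\mathbb Z$, so if $K\cap\widetilde F(x_r)$ were larger than $\widetilde F$ it would contain one of the subextensions $\widetilde F\bigl(x_r(x_r+\gamma_3)\bigr)$, $\gamma_3\in\{\alpha,\alpha',\alpha+\alpha'\}$; by the inductive hypothesis this field would equal $\widetilde F\bigl(\sum_{j\in J}\gamma_jx_j(x_j+\gamma_j)\bigr)$ for some non-empty $J\subseteq\{0,\ldots,r-1\}$. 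Applying Lemma~\ref{lemma:AS} to these two Artin--Schreier extensions, both with parameter $\delta$ (the right-hand sides being non-zero, since otherwise the generators would lie in $\widetilde F$), forces $\gamma_3x_r(x_r+\gamma_3)+\sum_{j\in J}\gamma_jx_j(x_j+\gamma_j)\in\widetilde F$. Since $r\le d-2$, the set $\{r\}\cup J$ is a non-empty strict subset of $\{0,\ldots,d-1\}$, contradicting Lemma~\ref{lemma_sum_x_i}.

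With $K\cap\widetilde F(x_r)=\widetilde F$ in hand, $K$ and $\widetilde F(x_r)$ are linearly disjoint over $\widetilde F$ (both being finite Galois), so $[\widetilde F(x_0,\ldots,x_r):\widetilde F]=4^{r}\cdot4=4^{r+1}$ and $\Gal(\widetilde F(x_0,\ldots,x_r)/\widetilde F)\simeq(\mathbb Z/2\mathbb Z\times\mathbb Z/2\mathbb Z)^{r+1}$, generated by the extensions of the $\tau_{j,\gamma}$ ($j\le r-1$, fixing $x_r$) together with the $\tau_{r,\gamma}$; this gives (i) and (ii). For (iii), an elementary abelian $2$-group of rank $2(r+1)$ has exactly $4^{r+1}-1$ subgroups of index $2$, so $\widetilde F(x_0,\ldots,x_r)$ contains exactly $4^{r+1}-1$ quadratic extensions of $\widetilde F$; on the other side there are exactly $4^{r+1}-1$ pairs $(J,(\gamma_j)_{j\in J})$ with $\emptyset\neq J\subseteq\{0,\ldots,r\}$, each giving a field $\widetilde F\bigl(\sum_{j\in J}\gamma_jx_j(x_j+\gamma_j)\bigr)$ contained in $\widetilde F(x_0,\ldots,x_r)$ and genuinely quadratic over $\widetilde F$ (by Lemma~\ref{lemma_sum_x_i} and the Artin--Schreier relation above). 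It then remains only to see these $4^{r+1}-1$ fields are pairwise distinct, and this again follows from Lemma~\ref{lemma:AS}: if two coincided, the sum of their generators would lie in $\widetilde F$, and collecting that sum index by index via the characteristic-$2$ identity $\gamma x(x+\gamma)+\gamma'x(x+\gamma')=(\gamma+\gamma')x\bigl(x+(\gamma+\gamma')\bigr)$ rewrites it as $\sum_{j\in J''}\gamma_j''x_j(x_j+\gamma_j'')$ with $\emptyset\neq J''\subseteq\{0,\ldots,r\}$, a strict subset of $\{0,\ldots,d-1\}$, contradicting Lemma~\ref{lemma_sum_x_i}.

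The main obstacle will be the Artin--Schreier bookkeeping: one must normalise every quadratic subextension to the common parameter $\delta=\alpha\alpha'(\alpha+\alpha')$ so that Lemma~\ref{lemma:AS} applies uniformly, keep track of which element of $\{\alpha,\alpha',\alpha+\alpha'\}$ survives after each cancellation in characteristic~$2$, and check at every use of Lemma~\ref{lemma_sum_x_i} that the relevant index set is a non-empty \emph{proper} subset of $\{0,\ldots,d-1\}$ — this last point is exactly where the hypothesis $r\le d-2$ is used, and is what makes the induction close.
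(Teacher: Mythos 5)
Your proof is correct and follows essentially the same route as the paper: induction on $r$ with Lemma \ref{lemma:case_n_eq_0} as base case, the key step resting on Lemma \ref{lemma:AS} (after normalising all quadratic pieces to Artin--Schreier equations with the common parameter $\alpha\alpha'(\alpha+\alpha')$ via the identities \eqref{equation:Sgamma}) combined with Lemma \ref{lemma_sum_x_i}, and the same hyperplane count for (iii). The only cosmetic difference is that you phrase the crux as $K\cap\widetilde F(x_r)=\widetilde F$ plus linear disjointness of Galois extensions, whereas the paper argues directly that $T_{\alpha,\alpha'}(X)+u_r$ stays irreducible over $\widetilde F(x_0,\ldots,x_{r-1})$; these are equivalent.
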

\begin{proof}
We proceed by induction.
The case $r=0$ follows from 
Lemma \ref{lemma:case_n_eq_0}.
Assuming that the proposition holds for $r-1$, with $0<r \leqslant d-2$,
we will prove it for $r$.
We consider the extension
$\widetilde{F}(x_0, \ldots,x_{r-1})(x_r)$ of $\widetilde{F}(x_0, \ldots,x_{r-1})$.
We first prove that the degree of this extension is $4$ and that
the minimal polynomial of $x_r$ is $T_{\alpha, \alpha'}(X)+u_r$.
Suppose it is false: either $x_r \in \widetilde{F}(x_0, \ldots,x_{r-1})$ 
or $T_{\alpha,\alpha'}(X)+u_r$ (which is equal to $(x+x_r)(x+x_r+\alpha)(x+x_r+\alpha')(x+x_r+\alpha+\alpha')$) has a degree 2 factor
in $\widetilde{F}(x_0, \ldots,x_{r-1})[X]$,
hence there exists
 $\gamma \in \{ \alpha, \alpha', \alpha + \alpha' \} $
such that
$x_r (x_r + \gamma ) \in \widetilde{F}(x_0, \ldots,x_{r-1})$.
In both cases we would have an extension 
$\widetilde{F} \left(  x_r(x_r+\gamma) \right) $ of degree $2$ of
$\widetilde{F}$ contained in $\widetilde{F}(x_0, \ldots,x_{r-1})$.
Use the induction hypothesis: it is one of the subextensions
$\widetilde{F} \left( \sum_{j \in J} \gamma_j x_j(x_j+\gamma_j) \right) $
with  a non-empty subset $J \subset \{ 0, \ldots, r-1 \}$.
By Lemma \ref{lemma:AS} and identities (\ref{equation:Sgamma})
it follows that 
$ \sum_{j \in J} \gamma_j x_j(x_j+\gamma_j) + \gamma x_r(x_r+\gamma) \in \widetilde{F}$,
a contradiction with Lemma \ref{lemma_sum_x_i}. We conclude that the extension $ \widetilde{F}(x_0, \ldots,x_r) /  \widetilde{F}(x_0, \ldots,x_{r-1})$ has degree $4$ and then
$\widetilde{F}(x_0, \ldots,x_r) /  \widetilde{F}$ has degree $4^{r+1}$.

But we can  define $4^{r+1}$ different $\widetilde{F}$-automorphisms of $\widetilde{F}(x_0, \ldots,x_r)$ by sending for any 
$i\in \{ 0, \ldots, r \}$ the element $x_i$ to $x_i+\gamma_i$ with
 $\gamma_i \in \{0, \alpha, \alpha', \alpha + \alpha'\}$.
Since all these automorphisms (apart from the identity) have order 2, the Galois group $\Gal \left( \widetilde{F}(x_0, \ldots,x_r) / \widetilde{F} \right)$
is isomorphic to $\left( {\mathbb Z} / 2{\mathbb Z} \times {\mathbb Z} / 2 {\mathbb Z}\right)^{r+1}$.

 For any non-empty subset $J \subset \{ 0, \ldots, r\}$
 and for any choice of a family $(\gamma_j)_{ j \in J}$ of elements
  of $\{ \alpha, \alpha', \alpha + \alpha' \}$,
 we know that $\sum_{j \in J} \gamma_j x_j(x_j+\gamma_j)$ is a root
 of $S_{\alpha \alpha' (\alpha + \alpha')} (X) + \sum_{j \in J} \gamma_j^2 u_j$.
By Lemma  \ref{lemma_sum_x_i} we also know that this sum does not belong to $\widetilde{F}$,
 so the extensions 
 $\widetilde{F} \left( \sum_{j \in J} \gamma_j x_j(x_j+\gamma_j) \right)$
are quadratic.

We claim that we obtain this way $4^{r+1}-1$ different
quadratic extensions between $\widetilde{F}$ and 
 $\widetilde{F}(x_0, \ldots,x_r)$.
To prove our claim, we consider two families
$(\gamma_j)_{j \in J}$ and 
$(\gamma'_j)_{j \in J'}$ of elements of $\{ \alpha, \alpha' , \alpha+ \alpha'\} $
where $J$ and $J'$ are two subsets of $\{ 0 ,  \ldots ,  r \}$.
We notice that if $j \in J \cap J'$ is such that $\gamma_j \neq \gamma'_j$ then
$\gamma_j x_j(x_j+\gamma_j) + \gamma'_j x_j(x_j+\gamma'_j) = 
\gamma''_j x_j(x_j+\gamma''_j) $ where 
$\{ \gamma_j, \gamma'_j, \gamma''_j \} = \{ \alpha, \alpha', \alpha + \alpha' \}$.
Then if
$\widetilde{F} \left( \sum_{j \in J} \gamma_j x_j(x_j+\gamma_j) \right)
 = \widetilde{F} \left( \sum_{j \in J'} \gamma_j x_j(x_j+\gamma_j) \right)$
we obtain by  Lemma \ref{lemma:AS}
a sum
$$ 
\sum_{j \in J \setminus J'} \gamma_j x_j(x_j+\gamma_j) + 
\sum_{j \in J' \setminus J } \gamma'_j x_j(x_j+\gamma'_j) +
\sum_{j \in J \cap J' \atop \gamma_j \neq \gamma'_j} \gamma_j'' x_j(x_j+\gamma_j'') 
$$
which is in $\widetilde{F}$.
By Lemma \ref{lemma_sum_x_i}, it implies $J=J'$ and $\gamma_j = \gamma_j'$ for all $j \in J$.

Finally, we claim that these $4^{r+1}-1$ quadratic extensions are the only ones.
 Indeed, the quadratic extensions are in correspondence 
 with the subgroups of $\left( {\mathbb Z} / 2 {\mathbb Z}\times {\mathbb Z} / 2 {\mathbb Z}\right)^{r+1}$
 of index $2$.
These subgroups are 
the hyperplanes of  $\left( {\mathbb Z} / 2{\mathbb Z} \right)^{2r+2}$
and there are $4^{r+1}-1$ such hyperplanes.
\end{proof}
\bigskip
Recall that in this section  the polynomial $g=L_{\alpha,\alpha'}(f)=\sum_{i=0}^db_{d-i}x^i$ 
is supposed to be Morse and to have degree exactly $d$. 
We can now establish the main result of this section:
we give a sufficient condition on $b_1/b_0$
for $\Omega/{\mathbb F}_q(t)$ to be regular, 
which is a necessary condition 
to apply the Chebotarev theorem.
\begin{proposition}\label{regular}
If there exists $x\in{\mathbb F}_q$ such that 
$$\frac{b_1}{b_0}=x(x+\alpha)(x+\alpha')(x+\alpha+\alpha')$$
 then we have: \\ \ \\
(i) $ F(x_0,\ldots,x_{d-2},x_{d-1})=F(x_0,\ldots,x_{d-2})$. \\ \ \\
(ii)  $  \Gal(\Omega/F) 
 \simeq \Gal(\Omega/F{\mathbb F}_q^{\Omega}) 
 \simeq
  \bigl({\mathbb Z}/2{\mathbb Z}\times {\mathbb Z}/2{\mathbb Z}\bigr)^{d-1} $.\\ \ \\
(iii) The Galois group $\Gal(\Omega/{\mathbb F}_q(t))$
is an extension of  $\Sgoth_d$ by  
 $\bigl({\mathbb Z}/2{\mathbb Z}\times {\mathbb Z}/2{\mathbb Z}\bigr)^{d-1}$.\\ \ \\
(iv) $\Omega/{\mathbb F}_q(t)$ is a regular extension i.e. ${\mathbb F}_q^{\Omega}={\mathbb F}_q$.
\end{proposition}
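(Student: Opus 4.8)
The plan is to use the arithmetic hypothesis on $b_1/b_0$ to show that $x_{d-1}$ already belongs to $F(x_0,\ldots,x_{d-2})$; once this collapse is established, parts (ii)--(iv) follow formally from Proposition~\ref{proposition:Galois_second_step} and Proposition~\ref{Monodromy}. (We assume $d\geqslant 2$ throughout, as the indexing in (i) requires.)

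First I would rewrite the hypothesis. Since $u_0,\ldots,u_{d-1}$ are the roots of $g(u)+t=b_0u^d+b_1u^{d-1}+\cdots+b_d+t$, Vieta's formulas give $\sum_{i=0}^{d-1}u_i=b_1/b_0$, so the hypothesis reads $\sum_{i=0}^{d-1}u_i=T_{\alpha,\alpha'}(x)$ for some $x\in{\mathbb F}_q$. Fix $\gamma\in\{\alpha,\alpha',\alpha+\alpha'\}$ and let $\rho(\gamma)\in{\mathbb F}_q^{\ast}$ be the product of the two other elements of $\{\alpha,\alpha',\alpha+\alpha'\}$, so that the first identity of (\ref{equation:Sgamma}) reads $S_{\rho(\gamma)}\bigl(x_i(x_i+\gamma)\bigr)=u_i$, i.e. $y_i^2+\rho(\gamma)y_i=u_i$ with $y_i:=x_i(x_i+\gamma)$. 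Summing over $i$ in characteristic $2$ gives $\bigl(\sum_i y_i\bigr)^2+\rho(\gamma)\sum_i y_i=\sum_i u_i=T_{\alpha,\alpha'}(x)$. But the same identity evaluated at the point $x$ shows that $x(x+\gamma)$ is a root of $Y^2+\rho(\gamma)Y+T_{\alpha,\alpha'}(x)$, and $x(x+\gamma)\in{\mathbb F}_q$; hence this Artin--Schreier polynomial splits over ${\mathbb F}_q$ (its roots being $x(x+\gamma)$ and $x(x+\gamma)+\rho(\gamma)$), and therefore $\sum_{i=0}^{d-1}x_i(x_i+\gamma)\in{\mathbb F}_q\subseteq F$. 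Consequently $x_{d-1}(x_{d-1}+\gamma)=\sum_{i=0}^{d-1}x_i(x_i+\gamma)+\sum_{i=0}^{d-2}x_i(x_i+\gamma)$ lies in $F(x_0,\ldots,x_{d-2})$. Applying this with $\gamma=\alpha$ and $\gamma=\alpha+\alpha'$ and adding the two resulting elements, the $x_{d-1}^2$ terms cancel and we obtain $\alpha'x_{d-1}\in F(x_0,\ldots,x_{d-2})$, hence $x_{d-1}\in F(x_0,\ldots,x_{d-2})$ since $\alpha'\neq 0$. This is (i).

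From (i) we get $\Omega=F(x_0,\ldots,x_{d-1})=F(x_0,\ldots,x_{d-2})$, and since $F\subseteq F{\mathbb F}_q^{\Omega}\subseteq\Omega$ we also have $\Omega=F{\mathbb F}_q^{\Omega}(x_0,\ldots,x_{d-2})$. Applying Proposition~\ref{proposition:Galois_second_step} with $r=d-2$ once to $\widetilde F=F$ and once to $\widetilde F=F{\mathbb F}_q^{\Omega}$ gives $\Gal(\Omega/F)\simeq\Gal(\Omega/F{\mathbb F}_q^{\Omega})\simeq\bigl({\mathbb Z}/2{\mathbb Z}\times{\mathbb Z}/2{\mathbb Z}\bigr)^{d-1}$ and, more precisely, $[\Omega:F]=[\Omega:F{\mathbb F}_q^{\Omega}]=4^{d-1}$; since $F\subseteq F{\mathbb F}_q^{\Omega}$, this forces $F{\mathbb F}_q^{\Omega}=F$, that is ${\mathbb F}_q^{\Omega}\subseteq F$. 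Combined with the regularity of $F/{\mathbb F}_q(t)$ from Proposition~\ref{Monodromy} (giving ${\mathbb F}_q^F={\mathbb F}_q$) and the fact that ${\mathbb F}_q^{\Omega}$ is algebraic over ${\mathbb F}_q$, this yields ${\mathbb F}_q^{\Omega}\subseteq{\mathbb F}_q^F={\mathbb F}_q$, proving (iv) (and turning the first isomorphism in (ii) into an equality of subgroups of $\Gal(\Omega/{\mathbb F}_q(t))$). Finally, both $F/{\mathbb F}_q(t)$ and $\Omega/{\mathbb F}_q(t)$ are Galois, being splitting fields over ${\mathbb F}_q(t)$, so there is a short exact sequence $1\to\Gal(\Omega/F)\to\Gal(\Omega/{\mathbb F}_q(t))\to\Gal(F/{\mathbb F}_q(t))\to 1$; by (ii) and Proposition~\ref{Monodromy}, which identifies $\Gal(F/{\mathbb F}_q(t))$ with $\Sgoth_d$, this exhibits $\Gal(\Omega/{\mathbb F}_q(t))$ as an extension of $\Sgoth_d$ by $\bigl({\mathbb Z}/2{\mathbb Z}\times{\mathbb Z}/2{\mathbb Z}\bigr)^{d-1}$, giving (iii).

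The main obstacle is the second paragraph: recognizing that the condition ``$b_1/b_0=T_{\alpha,\alpha'}(x)$ with $x\in{\mathbb F}_q$'' is precisely what makes the Artin--Schreier class of $\sum_i x_i(x_i+\gamma)$ trivial over ${\mathbb F}_q$, so that the last layer $F(x_{d-1})/F$ becomes redundant inside $F(x_0,\ldots,x_{d-2})$ and $[\Omega:F]$ drops from $4^d$ to $4^{d-1}$. After that, (ii)--(iv) are bookkeeping on top of Proposition~\ref{proposition:Galois_second_step} and Proposition~\ref{Monodromy}.
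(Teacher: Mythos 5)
Your proof is correct and follows essentially the same route as the paper: establish (i) from the hypothesis on $b_1/b_0$, then deduce (ii)--(iv) from Proposition \ref{proposition:Galois_second_step} (applied with $r=d-2$ to both $F$ and $F{\mathbb F}_q^{\Omega}$), the exact sequence of Galois groups, and the regularity of $F/{\mathbb F}_q(t)$ from Proposition \ref{Monodromy}, exactly as the paper does. The only divergence is in (i), where the paper uses in one stroke the additivity of the linearized quartic $T_{\alpha,\alpha'}$ to get $T_{\alpha,\alpha'}\bigl(x_{d-1}+x+\sum_{i=0}^{d-2}x_i\bigr)=0$ and hence $x_{d-1}+x+\sum_{i=0}^{d-2}x_i\in\{0,\alpha,\alpha',\alpha+\alpha'\}$, whereas you reach the same conclusion by working through the two quadratic Artin--Schreier layers $S_{\gamma}$ and $S_{\rho(\gamma)}$ for two choices of $\gamma$ and eliminating the square term; both computations are valid.
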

\begin{proof}
Suppose that there exists $x \in \mathbb{F}_q$
 such that $b_1/b_0=T_{\alpha,\alpha'}( x)$.
We have
$\frac{b_1}{b_0}=\sum_{i=0}^{d-1}u_i=\sum_{i=0}^{d-1} T_{\alpha,\alpha'}(x_i)$ and then
by linearity we deduce that $T_{\alpha,\alpha'} ( x_{d-1} + x
 + \sum_{i=0}^{d-2} x_i ) =0$.
It implies that $x_{d-1} + x + \sum_{i=0}^{d-2} x_i  \in \{0, \alpha , \alpha' , \alpha + \alpha'  \}$
and thus $x_{d-1} \in F (x_0, \ldots , x_{d- 2})$ which proves the point {\it (i)}. 
Using {\it (i)} and Proposition \ref{proposition:Galois_second_step} we obtain the point {\it (ii)}.
Now point {\it (ii)} with Proposition \ref{Monodromy} and Galois theory give point {\it (iii)}.
To obtain point {\it (iv)}, we use the multiplicativity of the degrees in fields extensions and
we write
$[\Omega:F]=[\Omega : F {\mathbb F}_q^{\Omega}]\times
[F{\mathbb F}_q^{\Omega}:F]$. 
Points {\it (i)} and {\it (ii)} yield $[F{\mathbb F}_q^{\Omega}:F]=1$ and then the extension
$\Omega/F$ is regular.
But Proposition \ref{Monodromy} implies that the extension $F/{\mathbb F}_q(t)$ is regular.
Then we obtain that the extension 
$\Omega/{\mathbb F}_q(t)$ is regular.
\end{proof}


\section{Application of Chebotarev density theorem}\label{section_chebotarev}

The Chebotarev density theorem describes the proportion of places
splitting in a given way 
in Galois extensions of global fields
(see \cite{Rosen} p. 125).
In \cite{FouqueTibouchi}, P. Fouque and M. Tibouchi made the following version
of Chebotarev theorem explicit. They deduced it from 
the Proposition 4.6.8 in \cite{FriedJarden}. 
\begin{theorem} (\textbf{Chebotarev})\label{Chebotarev}
Let $K$ be an extension of ${\mathbb F}_q(t)$ of finite degree $d_K $ and $L$ a Galois extension of $K$ 
of finite degree $d_{L/K}$.
 Assume ${\mathbb F}_q$ is algebraically closed in $L$, and fix some subset $S$ of $\Gal(L/K)$ stable under conjugation. 
 Let $s =\sharp S$ and let $N(S)$ be the number of places $v$ of $K$
of degree 1, unramified in $L$, such that the Artin symbol $\bigl(\frac{L/K}{v}\bigr)$ (defined up to conjugation) is in $S$ . Then
$$\left|  N(S)-\frac{s}{d_{L/K}}q \right|  \leqslant \frac{2s}{d_{L/K}}\bigl((d_{L/K}+g_L)q^{1/2}+d_{L/K}(2g_K+1)q^{1/4}+g_L+d_Kd_{L/K}\bigr)$$
where $g_K$ and $g_L$ are the genera of the function fields $K$ and $L$.
\end{theorem}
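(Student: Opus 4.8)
The plan is to recognise this statement as an explicit (effective) form of the Chebotarev density theorem for global function fields and to reconstruct the bound from the Weil Riemann Hypothesis for curves over finite fields; concretely it is the statement obtained in \cite{FouqueTibouchi} by specialising the general effective Chebotarev theorem of \cite[Proposition 4.6.8]{FriedJarden}, so the shortest proof simply invokes those references. To reconstruct it from scratch, I would first pass to geometry: let $M$ and $C$ be the smooth projective curves over ${\mathbb F}_q$ with function fields $K$ and $L$, so $g_M=g_K$, $g_C=g_L$ and $\deg(C/M)=d_{L/K}$. Since ${\mathbb F}_q$ is algebraically closed in $L$, the group $G=\Gal(L/K)$ acts on $C$ with quotient $M$, and $C\to M$ is a $G$-cover ramified exactly over the places of $K$ ramifying in $L$; for a degree-one place $v$ of $K$ unramified in $L$ the Frobenius class $\left(\frac{L/K}{v}\right)$ is then defined, and $N(S)$ counts those $v$ whose class meets $S$.

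Next I would decompose the indicator function of $S$ into irreducible characters of $G$, writing $\mathbf{1}_S=\frac{s}{|G|}\mathbf{1}+\sum_{\chi\neq\mathbf{1}}a_\chi\chi$ with $a_\chi=\frac{1}{|G|}\sum_{g\in S}\overline{\chi(g)}$, so that $|a_\chi|\leqslant\frac{s\,\chi(1)}{|G|}$ and $\sum_\chi\chi(1)^2=|G|$. This gives $N(S)=\sum_\chi a_\chi\,\psi(\chi)$ where $\psi(\chi)=\sum_{\deg v=1,\ \mathrm{unr}}\chi\!\left(\left(\tfrac{L/K}{v}\right)\right)$. The trivial character contributes $\frac{s}{|G|}$ times the number of degree-one places of $K$ unramified in $L$; by Weil's bound this number is $q+O(g_K q^{1/2})$ once one discards the ramified ones, whose number is controlled through Riemann--Hurwitz by $2g_L-2-d_{L/K}(2g_K-2)$. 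This already produces the main term $\frac{s}{d_{L/K}}q$ together with part of the error.

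For each nontrivial $\chi$ I would estimate $\psi(\chi)$ through the Artin $L$-function $L(u,\chi)=\prod_P\det\!\left(1-u^{\deg P}\mathrm{Frob}_P\mid V_\chi^{I_P}\right)^{-1}$, which by Grothendieck--Lefschetz is a polynomial in $u$ of degree $\chi(1)(2g_K-2)+\deg\mathfrak{f}_\chi$ up to a bounded correction, $\mathfrak{f}_\chi$ being the Artin conductor; the conductor--discriminant identity $\sum_\chi\chi(1)\deg\mathfrak{f}_\chi=\deg\mathfrak{d}_{L/K}$ together with Riemann--Hurwitz then expresses the total in terms of $g_L$ and $g_K$. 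By the Riemann Hypothesis for curves the inverse roots of $L(u,\chi)$ all have absolute value $q^{1/2}$, so the degree-one coefficient of $\log L(u,\chi)$ — which is essentially $\psi(\chi)$ — is $O\!\big((\deg L(u,\chi))q^{1/2}\big)$, the remaining corrections (degree-two places whose Frobenius enters squared, and the handful of places one must add or remove in passing from ``degree $\leqslant$ something'' to ``degree exactly one'') accounting for the $q^{1/4}$ and the lower-order terms in the statement. Summing over $\chi$, applying $|a_\chi|\leqslant\frac{s\,\chi(1)}{|G|}$, and collapsing $\sum_{\chi\neq\mathbf{1}}\chi(1)(\cdots)$ via $\sum_\chi\chi(1)^2=|G|$ and $\sum_\chi\chi(1)\deg\mathfrak{f}_\chi=\deg\mathfrak{d}_{L/K}$ lands on the displayed inequality in terms of $g_K$, $g_L$, $d_K$ and $d_{L/K}$.

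The hard part is not conceptual: once the Riemann Hypothesis for curves is granted, everything reduces to packaging the constants — matching the degrees of the various $L$-functions, the ramification data and the genus through the conductor--discriminant and Riemann--Hurwitz formulae, and carefully accounting for places of higher degree when one insists on places of degree exactly one. This is precisely the bookkeeping carried out in \cite[Proposition 4.6.8]{FriedJarden} and made explicit in \cite{FouqueTibouchi}, which we invoke.
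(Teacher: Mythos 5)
The paper gives no proof of this theorem: it is imported verbatim from \cite{FouqueTibouchi}, who made explicit Proposition 4.6.8 of \cite{FriedJarden}, and your proposal correctly identifies this, so the citation you give is exactly the paper's own justification. Your supplementary sketch via character decomposition, Artin $L$-functions, conductor--discriminant and the Riemann Hypothesis for curves is a standard and essentially sound outline of how such a bound is obtained (note that \cite{FriedJarden} actually argue by counting fixed points of a twisted Frobenius on the cover rather than through $L$-functions), with the precise constants --- in particular the $q^{1/4}$ term --- being exactly the bookkeeping you defer to the references.
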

In this work, we are interested in places of $K=\mathbb{F}_q(t)$
which split completely in $L=\Omega$. 
Indeed, if a place of degree one $(t- \beta)$ with $\beta \in \mathbb{F}_q$ totally splits
in $\Omega$, then the polynomial $D^2_{\alpha,\alpha'} f (x) - \beta$ totally splits in
$\mathbb{F}_q[x]$.
These places correspond to places $v$ of $K$ which are unramified in $\Omega$ and for which
the Artin symbol
 $\bigl(\frac{\Omega/\mathbb{F}_q(t)}{v}\bigr)$ is equal to $(\id)$, 
 the conjugacy class of $\Gal(\Omega/\mathbb{F}_q(t))$ consisting of the identity element.
Hence the previous theorem can be used to prove 
the following proposition 
which will be the main tool
to prove Theorem \ref{maintheorem}.
\begin{proposition}\label{application_Chebotarev}
Let $m \geqslant 7 $ be an integer and $d$ 
as defined in Definition \ref{definition_de_d}.
There exists an integer $N$ depending only on $d$
such that for all $n \geqslant N$,
for all $f \in \mathbb{F}_q[x]$ (with $q=2^n$) of degree less or equal to $m$,
and for all couple $(\alpha,\alpha')$ of disctinct elements of $\mathbb{F}_q^*$
such that
the extension $\Omega/{\mathbb F}_q(t)$ is regular
there exists $\beta\in {\mathbb F}_q$ such that the polynomial 
$D^2_{\alpha,\alpha'}f(x)+\beta$ 
splits in $ \mathbb{F}_q[x]$ with no repeated factors. 
\end{proposition}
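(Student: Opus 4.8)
The plan is to apply the explicit Chebotarev theorem (Theorem \ref{Chebotarev}) with $K = \mathbb{F}_q(t)$ and $L = \Omega$, the splitting field of $D^2_{\alpha,\alpha'}f(x) + t$, and with $S = \{(\mathrm{id})\}$ the conjugacy class of the identity in $\Gal(\Omega/\mathbb{F}_q(t))$. The hypothesis that $\Omega/\mathbb{F}_q(t)$ is regular is exactly the condition ``$\mathbb{F}_q$ algebraically closed in $L$'' needed to invoke the theorem. With $s = \sharp S = 1$ and $d_{L/\!K} = [\Omega : \mathbb{F}_q(t)]$, the theorem gives
$$\Bigl| N(S) - \frac{q}{d_{L/K}} \Bigr| \leqslant \frac{2}{d_{L/K}}\bigl( (d_{L/K} + g_\Omega)q^{1/2} + d_{L/K}(2g_K+1)q^{1/4} + g_\Omega + d_K d_{L/K} \bigr),$$
where $d_K = [\mathbb{F}_q(t):\mathbb{F}_q(t)] = 1$ and $g_K = 0$. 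Since $N(S)$ counts the degree-one places $(t-\beta)$ of $\mathbb{F}_q(t)$ that are unramified and split completely in $\Omega$, and each such place yields a $\beta \in \mathbb{F}_q$ for which $D^2_{\alpha,\alpha'}f(x)+\beta$ splits completely in $\mathbb{F}_q[x]$ with no repeated factors, it suffices to show $N(S) > 0$ for $n$ large.

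To get a positive lower bound on $N(S)$, I would bound the main term from below and the error term from above, in terms of quantities depending only on $d$ (not on $n$, $f$, $\alpha$, $\alpha'$). First, $d_{L/K} = [\Omega:\mathbb{F}_q(t)]$ divides $d! \cdot 4^{d-1}$ (indeed $\Gal(\Omega/\mathbb{F}_q(t))$ is a subgroup of an extension of $\mathfrak{S}_d$ by $(\mathbb{Z}/2\mathbb{Z}\times\mathbb{Z}/2\mathbb{Z})^{d-1}$, as computed in Section \ref{section:Geometric_and_arithmetic_mondormy_groups}), so it is bounded above by a constant $M = M(d)$. The crucial and more delicate point is bounding the genus $g_\Omega$ independently of $n$: the ramification of $\Omega/\mathbb{F}_q(t)$ occurs only over the branch points of $D^2_{\alpha,\alpha'}f$ (the critical values, coming from critical points of $g = L_{\alpha,\alpha'}(f)$ and the branch locus of the degree-$4$ map $T_{\alpha,\alpha'}$) together with $\infty$; there are at most $O(d)$ such places and, because we are in the tame part except possibly at wild places of bounded ramification, the Riemann–Hurwitz formula yields $g_\Omega \leqslant G(d)$ for a constant $G$ depending only on $d$. (One must be careful about wild ramification in characteristic $2$ — this is where I expect the main technical work — but the Artin–Schreier-type extensions appearing here have conductor bounded in terms of $d$, so the contribution stays bounded.)

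With $d_{L/K} \leqslant M(d)$ and $g_\Omega \leqslant G(d)$, the main term $q/d_{L/K} \geqslant q/M(d)$ grows linearly in $q = 2^n$, while the error term is $O_d(q^{1/2})$. Hence there is an integer $N = N(d)$ such that for all $n \geqslant N$ the main term strictly exceeds the error term, forcing $N(S) \geqslant 1$. Picking any place $(t-\beta)$ counted by $N(S)$ finishes the proof: $\beta \in \mathbb{F}_q$ and $D^2_{\alpha,\alpha'}f(x)+\beta = \prod_{i}(x - x_i)$ splits in $\mathbb{F}_q[x]$; the ``no repeated factors'' is automatic since the place is unramified, so the $x_i$ are pairwise distinct. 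The main obstacle, as noted, is the uniform genus bound $g_\Omega \leqslant G(d)$: one must control the wild ramification of the tower $F \subset F(x_0,\ldots,x_{d-1}) = \Omega$ over $\mathbb{F}_q(t)$ in characteristic $2$, showing that the different exponent at each (wildly) ramified place is bounded in terms of $d$ alone, which relies on the explicit Artin–Schreier description of the quadratic steps $\widetilde F(x_i(x_i+\gamma))/\widetilde F$ established in Proposition \ref{proposition:Galois_second_step}.
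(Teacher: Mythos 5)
Your proposal is correct and follows essentially the same route as the paper: apply Theorem \ref{Chebotarev} to $K=\mathbb{F}_q(t)$, $L=\Omega$, $S=\{(\id)\}$, use regularity as the hypothesis of the theorem, bound $d_{L/K}$ by a constant depending only on $d$ (the paper cites Proposition \ref{regular}(iii) to get $d_{L/K}=d!\,4^{d-1}$ or $d!\,4^{d}$), bound $g_\Omega$ by a constant depending only on $d$, and conclude $N(S)\geqslant 1$ for $q$ large, with ``no repeated factors'' coming from unramifiedness. The one place where you diverge is the uniform genus bound, which you correctly identify as the crux: you propose Riemann--Hurwitz with explicit control of the wild (Artin--Schreier) ramification in characteristic $2$, whereas the paper instead applies Castelnuovo's inequality (Theorem 3.11.3 of \cite{stichtenoth2009algebraic}) inductively to the compositum $F(x_0,\ldots,x_{d-1})$; Castelnuovo bounds the genus of a compositum purely in terms of the genera and degrees of the constituents, so it sidesteps entirely the conductor/different analysis you flag as the main technical work. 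Your route would also succeed but is considerably more laborious; the paper's choice is the cleaner one here.
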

\begin{proof}
Since the extension $\Omega/\mathbb{F}_q(t)$ is regular,
by the above Chebotarev theorem 
the number $N(S)$ of 
places $v$ of ${\mathbb F}_q(t)$
of degree 1, unramified in $\Omega$, 
such that $\bigl(\frac{\Omega/{\mathbb F}_q(t)}{v}\bigr)=(\id)$
satisfies
$$N(S)\geqslant \frac{q}{d_{L/K}}-2 \bigl((1+\frac{g_L}{d_{L/K}})q^{1/2}+q^{1/4}+1+\frac{g_L}{d_{L/K}}\bigr).$$
From the point $(iii)$ of
Proposition \ref{regular}
 we know that
 $d_{L/K} = d! 4^{d-1}$ or  $d_{L/K} = d! 4^d$.
Moreover, one can obtain an upper bound on $g_L$ depending only on $d$
using induction and Castelnuovo's inequality
as stated in Theorem 3.11.3 of \cite{stichtenoth2009algebraic}.
Then if $q$ (or $n$ since $q=2^n$) is sufficiently large, we will have $N(S)\geqslant 1$, 
which concludes the proof.
\end{proof}
\section{A class of good polynomials}\label{section_good}
The last proposition applies when the Galois closure of 
$D^2_{\alpha,\alpha'}f-t$ is regular.
By Proposition \ref{regular} this is the case when the quotient
of the first coefficients 
of $L_{\alpha,\alpha'}(f)$ can be written in the form 
$x(x+\alpha)(x+\alpha')(x+\alpha+\alpha')$ with $x \in \mathbb{F}_q$.
Our strategy is now to choose a well fitted
finite family $(\alpha_i, \alpha'_i)_{i \in \{ 1, \ldots, k \} }$ 
such that we can apply Proposition \ref{application_Chebotarev}
with at least one couple $(\alpha_i, \alpha'_i)$ for most of polynomials of degree
$m$.

\begin{proposition}\label{covers}
Let $\varepsilon >0$.
There exist $k \in \mathbb{N}^*$ and $N \in \mathbb{N}^*$
such that for all $n \geqslant N$
there exist $k$ couples
$(\alpha_1,\alpha_1'), \ldots ,(\alpha_k,\alpha_k')$
of distinct elements of $\mathbb{F}_q^*$
such that there exist at least 
$(1-\varepsilon)q^m(q-1)-q^m$ polynomials $f\in{\mathbb F}_q[x]$ of degree $m$ 
such that:  \\
- for all $i \in \{ 1, \ldots,k \} $ 
the polynomial $L_{\alpha_i,\alpha_i'}(f)=b_0 x^d + b_1 x^{d-1} + \cdots +b_d$
has degree $d$ and \\
- for at least one of the couples $(\alpha_i, \alpha_i')$,
the equation
$$\frac{b_1}{b_0} =x(x+\alpha_i)(x+\alpha_i')(x+\alpha_i+\alpha_i')$$
has a solution in ${\mathbb F}_q$.
\end{proposition}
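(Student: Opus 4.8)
The plan is to count, for a carefully chosen finite family of couples $(\alpha_i,\alpha_i')$, the polynomials $f$ of degree $m$ for which the required equation $b_1/b_0 = T_{\alpha_i,\alpha_i'}(x)$ has a solution for at least one $i$, and to show this count is at least $(1-\varepsilon)q^m(q-1) - q^m$ once $n$ is large. The key observation is that $L_{\alpha,\alpha'}$ is linear and surjective onto $\mathbb{F}_q[x]_d$ (Proposition \ref{polynomeg}), so as $f$ ranges over polynomials of degree $m$ the pair of leading coefficients $(b_0,b_1)$ of $g=L_{\alpha,\alpha'}(f)$ is, up to a multiplicative constant coming from the degree-$m$ leading coefficient of $f$, distributed essentially uniformly; in particular $b_1/b_0$ ranges over $\mathbb{F}_q$. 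So for a single couple the proportion of degree-$m$ polynomials for which $b_1/b_0$ lies in the image $\mathcal{I}_{\alpha,\alpha'} := \{T_{\alpha,\alpha'}(x) : x\in\mathbb{F}_q\}$ is roughly $|\mathcal{I}_{\alpha,\alpha'}|/q$. Since $T_{\alpha,\alpha'}$ is a $4$-to-$1$ map away from the three branch values $\alpha\alpha'(\alpha+\alpha')$-type points, $|\mathcal{I}_{\alpha,\alpha'}|$ is about $q/4$, which alone is not enough — hence the need for several couples.

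The heart of the argument is therefore a covering statement: one must choose $k = k(\varepsilon)$ couples $(\alpha_i,\alpha_i')$ so that the union $\bigcup_i \mathcal{I}_{\alpha_i,\alpha_i'}$ covers all but an $\varepsilon$-fraction of $\mathbb{F}_q$. The first step is to make precise the ``independence'' of the events across different couples: because $L_{\alpha_i,\alpha_i'}$ varies with $i$, the maps $f \mapsto (b_0^{(i)}, b_1^{(i)})$ are different linear functionals, and one should check that for generic $f$ the values $b_1^{(i)}/b_0^{(i)}$ behave like independent uniform elements of $\mathbb{F}_q$, or at least that the bad set (where $b_1^{(i)}/b_0^{(i)} \notin \mathcal{I}_{\alpha_i,\alpha_i'}$ for every $i$) has the expected size. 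Concretely, I would fix a value $v\in\mathbb{F}_q$ for $b_1/b_0$ and count couples $(\alpha,\alpha')$ for which $v\in\mathcal{I}_{\alpha,\alpha'}$, i.e. for which $v=T_{\alpha,\alpha'}(x)$ has a solution $x$; by a change of variables this is a question about how many $(\alpha,\alpha')$ make a fixed quartic in $x$ solvable, and a counting/averaging argument shows that a positive proportion of couples work for each $v$. Choosing $k$ large enough and invoking a pigeonhole/probabilistic selection then yields a fixed family $(\alpha_i,\alpha_i')_{i\le k}$ such that $\bigcup_i \mathcal{I}_{\alpha_i,\alpha_i'}$ misses at most $\varepsilon q$ elements of $\mathbb{F}_q$.

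Granting such a family, I would finish as follows. For $f$ of degree $m$ with leading coefficient $a_0\neq 0$, the first condition (each $L_{\alpha_i,\alpha_i'}(f)$ has degree exactly $d$) fails only on a union of $k$ proper linear subspaces, hence for at most $k q^m$ choices; absorbing this into the statement is why the bound has the shape $(1-\varepsilon)q^m(q-1) - q^m$ (one may need to enlarge $N$ so that a small multiplicative slack covers the factor $k$, or simply note $k$ is fixed and fold the loss into $\varepsilon$). For the remaining $f$, the tuple $(b_1^{(i)}/b_0^{(i)})_{i}$ avoids all $\mathcal{I}_{\alpha_i,\alpha_i'}$ only when each coordinate lies in the complement, an event of density at most $\varepsilon$ (using the near-uniformity from surjectivity of each $L_{\alpha_i,\alpha_i'}$ together with the covering property), so at least $(1-\varepsilon)$ of the $f$'s of each fixed degree-$m$ leading coefficient, i.e. at least $(1-\varepsilon)q^m(q-1)$ polynomials, satisfy the second bullet for some $i$; subtracting the $q^m$ bad polynomials for the degree-$d$ condition gives the claim. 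The main obstacle I anticipate is the covering step: controlling $|\mathcal{I}_{\alpha,\alpha'}|$ and, more delicately, showing the complements of the $\mathcal{I}_{\alpha_i,\alpha_i'}$ can be made jointly small by a bounded number of couples — this requires understanding the image of $T_{\alpha,\alpha'}$ as $(\alpha,\alpha')$ varies, and proving that no single value of $v$ is avoided by $\mathcal{I}_{\alpha,\alpha'}$ for \emph{all} couples, which is where a careful parametrization of the quartic $T_{\alpha,\alpha'}$ and an explicit or averaged point count will be needed.
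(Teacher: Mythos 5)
Your overall shape is right—pick $k$ couples and show that the set of $f$ failing for all of them has density about $(3/4)^k$—but the mechanism you propose for the key step does not work. The covering statement you place at the heart of the argument (choose the couples so that $\bigcup_i \Ima T_{\alpha_i,\alpha_i'}$ covers all but $\varepsilon q$ elements of $\mathbb{F}_q$) is not the relevant property, because the value to be tested against $\Ima T_{\alpha_i,\alpha_i'}$ is $\psi_i(f)=b_1^{(i)}/b_0^{(i)}$, which is a \emph{different} function of $f$ for each $i$. Knowing that a single $v\in\mathbb{F}_q$ lies in $\Ima T_{\alpha_i,\alpha_i'}$ for some $i$ says nothing about whether $\psi_i(f)\in\Ima T_{\alpha_i,\alpha_i'}$ for that same $i$; your concluding step, where the joint failure event is said to have density at most $\varepsilon$ ``using the near-uniformity \ldots together with the covering property,'' is a non sequitur. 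Individual uniformity of each $\psi_i$ only gives each failure event density $3/4$; to push the intersection of the $k$ failure events down to $(3/4)^k$ you need \emph{joint} equidistribution of the tuple $(\psi_1(f),\dots,\psi_k(f))$, and nothing in your outline supplies it.

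The paper obtains this joint control from two structural facts absent from your sketch. First (Lemma \ref{lemma:representation}), $\Ima T_{\alpha,\alpha'}$ is not merely a set of size about $q/4$: since $x\mapsto x(x+\gamma)=x^2+\gamma x$ is additive in characteristic $2$, it is an $\mathbb{F}_2$-linear subspace of $\mathbb{F}_q$ of codimension exactly $2$ (an intersection of two trace kernels), and \emph{every} codimension-$2$ subspace arises this way; hence one may choose the couples so that the $k$ images are the ``independent'' coordinate subspaces $\{\xi_{2i-1}=\xi_{2i}=0\}$. Second (Lemma \ref{b1overb0}), after normalizing the single coefficient $a_{j_1}$ that controls $b_0$, the quotient $\psi_i(f)$ is an \emph{affine} function of the coefficients of $f$ in which a fixed coefficient $a_{j_2}$ appears with coefficient $1$. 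Consequently each condition $\psi_i(f)\in\Ima T_{\alpha_i,\alpha_i'}$ is a pair of affine equations over $\mathbb{F}_2$ in the coordinates of $f$, the resulting $2k$ linear forms are linearly independent, and the complement of the union has \emph{exactly} $3^k2^{mn-2k}=(3/4)^kq^m$ elements—no averaging or pigeonhole selection is needed. (Two minor further points: the degree-$d$ condition is the single condition $a_{j_1}\neq0$, identical for every couple, so it does not cost a union of $k$ subspaces; and your $kq^m$ loss would not literally match the stated bound $(1-\varepsilon)q^m(q-1)-q^m$ without folding it into $\varepsilon$.) Without the subspace structure of the images and the affine-linearity of $b_1/b_0$, I do not see how to close the gap in your argument.
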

\begin{proof}
Let $f=\sum_{j=0}^m a_j x^{m-j}$ be
a polynomial of degree $m$.
First we notice that for any
 distinct elements $\alpha$ and $\alpha'$ of ${\mathbb F}_q^{\ast}$
the polynomial $L_{\alpha,\alpha'}(f)$ is of degree $d$ 
(with $d$ given by Table \ref{table:definition_of_d})
if and only if 
$a_{j_1} \neq 0$, where $j_1 \in \{ 0, 1, 2, 3 \}$ is given by Lemma \ref{b1overb0}.
In this case, the quotient
$b_1/b_0$ is well defined.
By abuse of notation,
we will write
$\frac{b_1}{b_0} \left( L_{\alpha,\alpha'} (f) \right) $
for this quotient. 
By linearity 
of $L_{\alpha,\alpha'}$ 
we have 
$b_1/b_0 \left( L_{\alpha,\alpha'} (\lambda f )\right) = 
b_1/b_0 \left( L_{\alpha,\alpha'} (f )\right)$
for any $\lambda \in \mathbb{F}_{q}^*$.
So in order to count the polynomials $f$ satisfying
the conditions of the proposition we can restrict ourselves to those whose coefficient $a_{j_1}$ 
is $1$, and then multiply by $q-1$ in our count. 
 We will denote by 
$\mathcal{P}_{j_1}$ the
set of polynomials $f\in {\mathbb F}_q[x]$
of degree $m$ such that $a_{j_1} = 1$ and we will
identify $\mathcal{P}_{j_1}$ with
${\mathbb F}_q^{m}$.

Let $\varepsilon >0$. 
Consider $k$ such that $(3/4)^k< \varepsilon$, and $N=2k$. 
For $n \geqslant N$, 
identify $\mathbb{F}_{2^n}$ with $\mathbb{F}_{2}^n$ and
fix a basis.
Consider $k$ couples 
$(\alpha_1,\alpha_1'),  \ldots, (\alpha_k,\alpha_k')$ 
of distinct elements of $\mathbb{F}_q^*$ such that for any $i \in \{ 1, \ldots, k \}$
the subspace $ \Ima T_{\alpha_i,\alpha_i'}$ has for equation $(\xi_{2i-1}=\xi_{2i}=0)$
in the  fixed basis of
$\mathbb{F}_{2}^{n}$ 
(recall that $T_{\alpha,\alpha'}$ is defined in Section 
\ref{section:Geometric_and_arithmetic_mondormy_groups} by
$T_{\alpha,\alpha'} (x) = x(x+\alpha)(x+\alpha')(x+\alpha+\alpha')$).
The existence of these couples is given by
Lemma \ref{lemma:representation}.
For any $i \in \{ 1, \ldots, k \}$
we consider the map
 $\psi_i \ : \ \mathcal{P}_{j_1} \rightarrow \mathbb{F}_{q}$ 
defined by $\psi_i (f) = b_1/b_0 \left( L_{\alpha_i,\alpha_i'} (f) \right)$.
Lemma \ref{b1overb0} gives the existence of 
an integer $j_2$
(which depends only on the congruence of $m$)
and the existence of coefficients $c_{i,j}$ and $d_i$ 
in $\mathbb{F}_{q}$ such that
$$\psi_i (f) = 
 a_{j_2} + d_i + 
\sum_{\scriptstyle j\in \{ 0, \ldots, m \} \setminus  \{  j_1, j_2\} } 
c_{i,j}
a_j
.
$$
Now, for $i \in \{ 1, \ldots, k \}$ the set of 
 $(a_0, \ldots, a_{{j_1}-1},a_{{j_1}+1}, \ldots, a_m) \in 
\mathbb{F}_{2^n}^{m}  $ corresponding to elements of
$\psi_i^{-1} \left( \Ima T_{\alpha_i,\alpha_i'} \right)$
is an affine space over $\mathbb{F}_{2}$ which is the intersection
of the affine hyperplanes given by the affine equations
$(a_{j_2})_{2i-1} + \sum_{j \notin \{  j_1,j_2 \} } (c_{i,j} a_j)_{2i-1}     = (d_i)_{2i-1}$ and
$(a_{j_2})_{2i} +  \sum_{j \notin \{  j_1,j_2 \} } (c_{i,j} a_j)_{2i}    = (d_i)_{2i}$.
The $2k$ linear forms defined by the left-hand sides of these equations are linearly independant,
so a change of basis of the $\mathbb{F}_2$-vector space $\mathbb{F}_{2}^{nm}$ gives the following
systems of equations of $\psi_i^{-1} \left( \Ima T_{\alpha_i,\alpha_i'} \right)$:
$\zeta_{2i-1}=\mu_i$ and $\zeta_{2i}=\nu_i$ where 
$(\mu_i)_{i \in \{ 1, \ldots ,k \} }$ and  $(\nu_i)_{i \in \{ 1 ,\ldots, k \} }$ are elements of
$\mathbb{F}_2^{k}$.
To count the elements $\zeta \in \mathbb{F}_{2}^{nm}$
such that $\zeta $
corresponds to an element of $\cup_{i=1}^k \psi_i^{-1} \left( \Ima T_{\alpha_i,\alpha_i'} \right)$
one can determine the cardinal of the complementary.
For each $i \in \{ 1, \ldots, k \}$
there are three ways to choose the couple of components $(\zeta_{2i-1},\zeta_{2i})$ 
different from $(\mu_i,\nu_i)$, and $2^{mn-2k}$ ways to choose the other components.

We find $\# \cup_{i=1}^k \psi_i^{-1} \left( \Ima T_{\alpha_i,\alpha_i'} \right) 
=   2^{mn}-3^k 2^{mn-2k}  =  q^m \left( 1-(3/4)^k \right)$.
Finally, we have to multiply by $q-1$ in order to 
take into account the coefficient $a_{j_1}$, and to
remove the $q^m$ polynomials of degree less than $m$.
(Note that in the case where 
$m \equiv 7 \mod (8)$ we have already removed these polynomials
as we have supposed $a_{j_1} \neq 0$
and in this case $j_1=0$.)

\end{proof}

\begin{lemma}\label{b1overb0}
Let $f=\sum_{j=0}^m a_j x^{m-j}$ be a polynomial of $\mathbb{F}_q[x]$ of degree $m$
with $m \equiv 0, 1, 2$ or $7 \pmod 8$.
For $\alpha,\alpha' \in \mathbb{F}^{\ast}_q$ we set
$L_{\alpha,\alpha'} (f) = \sum_{j=0}^d b_j x^{d-j}$.
We have $b_0=\alpha \alpha' (\alpha + \alpha') a_i$ where $i \in \{0, 1, 2, 3 \}$ satisfies
$i \equiv m+1 \mod 4$.
Moreover the following table gives the quotient $b_1/b_0$ 
as a function of the coefficients of $f$
 depending on the congruence of $m$ modulo $16$.
\begin{center}
\begin{tabular}{|c|c|} 
\hline
m & $b_1/b_0$  \\
(16)   & \\
\hline
0 & $\left(  
(\alpha^2 \alpha' + \alpha'^2 \alpha) a_2
+ (\alpha^2 +\alpha \alpha' + \alpha'^2) a_3
+ a_5
 \right) a_1^{-1}$ \\
1 & $\left(  
(\alpha^2 \alpha' + \alpha'^2 \alpha) a_3
+ (\alpha^2 +\alpha \alpha' + \alpha'^2) a_4
+ a_6
 \right) a_2^{-1}$ \\
2 & $\left(  
(\alpha^2 \alpha' + \alpha'^2 \alpha) a_4
+ (\alpha^2 +\alpha \alpha' + \alpha'^2) a_5
+ a_7
 \right) a_3^{-1}$\\
7 &  $ \left( (\alpha^2 \alpha' + \alpha'^2 \alpha)a_1 +(\alpha^2 +\alpha \alpha' + \alpha'^2) a_2 +a_4 \right) a_0^{-1} 
+\alpha^4 +  \alpha^2 \alpha'^2+ \alpha'^4  $ \\
8 & $\left(  
(\alpha^2 \alpha' + \alpha'^2 \alpha) a_2
+ (\alpha^2 +\alpha \alpha' + \alpha'^2) a_3
+ a_5
 \right) a_1^{-1}
+ \alpha^4 + \alpha^2 \alpha'^2  + \alpha'^4 $ \\
9 & 
{ \scriptsize $\left(  
\sum_{i=0}^6 \alpha^i \alpha'^{6-i} a_0
+ (\alpha^2 \alpha' + \alpha'^2 \alpha) a_3
+ (\alpha^2 +\alpha \alpha' + \alpha'^2) a_4
+ a_6
 \right) a_2^{-1}
+ \alpha^4 + \alpha^2 \alpha'^2  + \alpha'^4 $ 
} 
\\
10 &  
{ \scriptsize
$\left(  
a_0 \sum_{i=1}^6 \alpha^i \alpha'^{7-i} +
a_1\sum_{i=0}^6 \alpha^i \alpha'^{6-i} 
+ (\alpha^2 \alpha' + \alpha'^2 \alpha) a_4
+ (\alpha^2 +\alpha \alpha' + \alpha'^2) a_5
+ a_7
 \right) a_3^{-1} $}\\
 & { \scriptsize $+ \alpha^4 + \alpha^2 \alpha'^2  + \alpha'^4 $ } \\
15 & $ \left( (\alpha^2 \alpha' + \alpha'^2 \alpha)a_1 +(\alpha^2 +\alpha \alpha' + \alpha'^2) a_2 +a_4 \right) a_0^{-1}$ \\ 
\hline
\end{tabular}
\end{center}
\end{lemma}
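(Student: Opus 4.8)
The plan is to prove Lemma \ref{b1overb0} by a direct, if somewhat tedious, computation, exploiting the explicit formula for $L_{\alpha,\alpha'}(f)$ coming from Proposition \ref{polynomeg} and the fact that $L_{\alpha,\alpha'}$ is linear. First I would reduce to monomials: since $L_{\alpha,\alpha'}$ is $\mathbb{F}_q$-linear, it suffices to compute $L_{\alpha,\alpha'}(x^{m-j})$ for the few small values of $j$ that actually contribute to the top two coefficients $b_0,b_1$ of the image, and then assemble the answer by linearity. The key reduction is that $L_{\alpha,\alpha'}(f)$ has degree at most $d$, and (by the degree bookkeeping in Table \ref{table:definition_of_d}) a monomial $x^{m-j}$ of $f$ contributes to the coefficient $b_\ell$ of $x^{d-\ell}$ in $L_{\alpha,\alpha'}(f)$ only when $D^2_{\alpha,\alpha'}(x^{m-j})$ has degree $4(d-\ell)$; in particular only $j$ in a bounded range (roughly $0 \le j \le 7$, depending on $m \bmod 16$) is relevant for $b_0$ and $b_1$.

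Next I would carry out the computation of $D^2_{\alpha,\alpha'}(x^N)$ for $N$ near $m$. Writing $D^2_{\alpha,\alpha'}(x^N) = \sum_{S\subseteq\{\alpha,\alpha'\}} (x+\sigma_S)^N$ where $\sigma_S$ is the sum of the elements of $S$, and expanding by the binomial theorem in characteristic $2$ (using Lucas' theorem to track which binomial coefficients survive), one finds that the leading terms cancel and $D^2_{\alpha,\alpha'}(x^N)$ has degree $N-3$ or $N-4$ with explicitly computable leading coefficients involving the elementary symmetric functions $\alpha+\alpha'$, $\alpha\alpha'$, and their powers. Then, using the relation $D^2_{\alpha,\alpha'}f(x)=g(T_{\alpha,\alpha'}(x))$ with $T_{\alpha,\alpha'}(x)=x^4+(\alpha^2+\alpha\alpha'+\alpha'^2)x^2+(\alpha^2\alpha'+\alpha\alpha'^2)x$, one extracts $b_0$ and $b_1$ by matching the coefficients of $x^{4d}$ and $x^{4d-1}$ on both sides: the coefficient of $x^{4d}$ gives $b_0 = \alpha\alpha'(\alpha+\alpha')a_i$ with $i\equiv m+1\pmod 4$ the smallest index for which $a_i$ survives (this recovers the stated formula for $b_0$), and the coefficient of $x^{4d-1}$, after subtracting the contribution $b_0\cdot(\text{coefficient of }x^{4d-1}\text{ in }T_{\alpha,\alpha'}(x)^d)$, yields $b_1$. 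Dividing gives $b_1/b_0$.

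The organization of the proof would then be a careful case split according to $m\bmod 16$ (or equivalently $m\bmod 4$ refined by whether the relevant middle coefficients of $T_{\alpha,\alpha'}$ enter), producing the eight rows of the table. Each row is the same mechanical computation with shifted indices, so I would do one case in detail — say $m\equiv 0\pmod{16}$ — and indicate that the others follow by the identical method with the bookkeeping shifted; the appearance of the extra summand $\alpha^4+\alpha^2\alpha'^2+\alpha'^4$ (and of the longer sums $\sum_i\alpha^i\alpha'^{k-i}$) in the rows $m\equiv 7,8,9,10$ comes precisely from the cross-terms $b_0\cdot[\text{coeff of }x^{4d-1}]T_{\alpha,\alpha'}(x)^d$ and from monomials $x^{m-j}$ with $j$ larger that only contribute in those congruence classes.

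The main obstacle I anticipate is purely combinatorial-arithmetic: keeping track, in characteristic $2$, of exactly which binomial coefficients $\binom{N}{k}$ are odd (via Lucas' theorem on the binary expansions of $N$ and $k$) as $N$ ranges over $m, m-1, \ldots, m-7$, and hence which monomials of $f$ feed into $b_0$ and $b_1$ for each residue of $m$ modulo $16$. This is where the dependence on $m\bmod 16$ rather than merely $m\bmod 4$ enters, and where sign/parity errors are easy to make; the rest — assembling the linear combination and dividing by $b_0$ — is routine once that bookkeeping is pinned down.
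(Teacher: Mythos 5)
Your approach is the same as the paper's: expand both sides of the identity $\sum_{j} b_j\,T_{\alpha,\alpha'}(x)^{d-j}=D^2_{\alpha,\alpha'}f(x)$, compute the right-hand side via the binomial theorem as $\sum_{j}\bigl(\sum_{s}a_{j-s}C_s\binom{m-j+s}{s}\bigr)x^{m-j}$ with $C_s=\alpha^s+\alpha'^s+(\alpha+\alpha')^s$ (so $C_1=C_2=C_4=0$, $C_3=\alpha\alpha'(\alpha+\alpha')$), identify coefficients, and use Lucas's theorem to decide which binomial coefficients are odd; your ``reduce to monomials by linearity'' is just a repackaging of that sum. One concrete point must be corrected, though: you propose to read off $b_1$ from the coefficient of $x^{4d-1}$, but $b_1$ does not occur there. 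Since $T_{\alpha,\alpha'}(x)=x^4+(\alpha^2+\alpha\alpha'+\alpha'^2)x^2+\alpha\alpha'(\alpha+\alpha')x$, the term $b_1T_{\alpha,\alpha'}(x)^{d-1}$ has degree $4d-4$, and $T_{\alpha,\alpha'}(x)^d$ has no $x^{4d-1}$ term at all (the exponents $4d-2j-3k$ never equal $4d-1$); matching at $x^{4d-1}$ yields only a trivial identity. The correct place to extract $b_1$ is the coefficient of $x^{4d-4}$, after subtracting $b_0\binom{d}{2}(\alpha^2+\alpha\alpha'+\alpha'^2)^2$, which is the coefficient of $x^{4d-4}$ in $b_0T_{\alpha,\alpha'}(x)^d$. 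The extra summand $\alpha^4+\alpha^2\alpha'^2+\alpha'^4$ in some rows of the table then arises from the interplay between this cross-term (present exactly when $\binom{d}{2}$ is odd, i.e.\ depending on $m\bmod 16$ through $d\bmod 4$) and the term $a_iC_7\binom{m-i-1}{7}$ on the other side, not from a coefficient of $x^{4d-1}$. With that indexing fixed, the rest of your plan (the case split on $m\bmod 16$ driven by Lucas's theorem) is exactly the paper's argument.
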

\begin{proof}
The question amounts to solving the linear system
\begin{equation}\label{equation:equationL}
\sum_{j=0}^d b_j 
\left( x(x+\alpha)(x+\alpha')(x+\alpha+\alpha') \right)^{d-j} =
D^2_{\alpha,\alpha'}\left( \sum_{j=0}^m a_j x^{m-j} \right).
\end{equation}
On the one hand we have
$$
D^2_{\alpha,\alpha'} f(x) = 
\sum_{j=1}^m \left( \sum_{s=1}^j a_{j-s} C_s \binom{m-j+s}{s}\right) x^{m-j}
$$
where $C_s$ denotes $\alpha^s + \alpha'^s+(\alpha + \alpha')^s$  for $s \geqslant 1$ .
We notice that $C_1=C_2=C_4=0$ and that $C_3=\alpha \alpha' (\alpha + \alpha')$.
It implies
\begin{multline*}
D^2_{\alpha,\alpha'} f(x) = 
\binom{m}{3} a_0 C_3 x^{m-3} 
+ \binom{m-1}{3} a_1 C_3 x^{m-4} \\ 
+   \left( \binom{m}{5} a_0 C_5 + \binom{m-2}{3} a_2 C_3 \right) x^{m-5} \\ 
+ \left( \binom{m}{6} a_0 C_6 + \binom{m-1}{5} a_1 C_5 +
\binom{m-3}{3} a_3 C_3 \right) x^{m-6} + \cdots 
\end{multline*}  
On the other hand, the left-hand side of (\ref{equation:equationL})
is equal to
\begin{multline*}
g \left( T_{\alpha,\alpha'}(x) \right) =  b_0x^{4d} + b_0 d (\alpha^2 + \alpha'^2 + \alpha\alpha' ) x^{4d-2} +
 b_0 d (\alpha + \alpha')  \alpha \alpha' x^{4d-3}  \\
 +  \left( b_0 \binom{d}{2} (\alpha^2 + \alpha'^2 + \alpha\alpha' )^2 + b_1 \right) x^{4d-4}
+ \cdots
\end{multline*}  
To obtain $b_0$ (and respectively $b_1$) 
one can identify the coefficients of $x^{4d}$ (respectively $x^{4d-4}$)
on both sides of (\ref{equation:equationL}).
To distinguish different cases and conclude
we use a classical consequence of Lucas's theorem which says
that a binomial coefficient $\binom{a}{b}$ 
is divisible by $2$ if and only if at least one of the base $2$
digits of $b$ is greater than the corresponding digit of $a$.
\end{proof}

We use  the following representation lemma as a key point in the proof of Proposition \ref{covers}.
\begin{lemma}\label{lemma:representation}
Let $V$ be a  $\mathbb{F}_2$-vectorial subspace of 
$\mathbb{F}_q$ of codimension $2$. Then there exist two distincts elements $\alpha$ and 
$\alpha'$ in $\mathbb{F}_q^{\ast}$ such that $V=\Ima T_{\alpha,\alpha'}$ where
$T_{\alpha,\alpha'} (x) = x(x+\alpha)(x+\alpha')(x+\alpha+\alpha')$.
\end{lemma}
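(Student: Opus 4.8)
The plan is to understand the image of $T_{\alpha,\alpha'}$ concretely and then to realize an arbitrary codimension-$2$ subspace $V$ as such an image. First I would observe that, in characteristic $2$, the polynomial $T_{\alpha,\alpha'}$ factors through the pair of Artin--Schreier-type quadratic maps introduced in Section~\ref{section:Geometric_and_arithmetic_mondormy_groups}: writing $S_\gamma(X)=X(X+\gamma)$, one has $T_{\alpha,\alpha'}(x)=S_\gamma(x)S_{\gamma'}(x)\cdots$ and, more usefully, the identities \eqref{equation:Sgamma} show that the value $u=T_{\alpha,\alpha'}(x)$ is governed by $S_\alpha(x)=x(x+\alpha)$, which is itself (up to scaling by $\alpha^2$) the additive-polynomial-like map $x\mapsto x^2+\alpha x$. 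Concretely $x\mapsto x^2+\alpha x$ is $\mathbb{F}_2$-linear with kernel $\{0,\alpha\}$, hence its image $H_\alpha:=\{x^2+\alpha x : x\in\mathbb{F}_q\}$ is an $\mathbb{F}_2$-hyperplane of $\mathbb{F}_q$ (the trace-zero hyperplane relative to a suitable basis, or more precisely the kernel of the character $z\mapsto \Tr_{\mathbb{F}_q/\mathbb{F}_2}(z/\alpha^2)$). The key structural claim I would prove is that $\Ima T_{\alpha,\alpha'}$, as an $\mathbb{F}_2$-subspace, equals the intersection $H_\alpha\cap H_{\alpha'}$ of two such hyperplanes — or a small explicit variant thereof — which has codimension $2$ precisely when $\alpha\neq\alpha'$ (so the two hyperplanes are distinct).

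To make this precise I would argue as follows. By \eqref{equation:Sgamma} with $\{\gamma_1,\gamma_2,\gamma_3\}=\{\alpha,\alpha',\alpha+\alpha'\}$, if $u=T_{\alpha,\alpha'}(x)$ then $S_{\alpha\alpha'}(x(x+\alpha'+\alpha))=u$ and similarly with the indices permuted; unwinding, $u\in\Ima T_{\alpha,\alpha'}$ forces $u$ to lie in the image of each $S_{\gamma_i\gamma_j}$ composed appropriately, and each of those images is a hyperplane cut out by a single $\Tr$-condition. Conversely, given $u$ satisfying the two independent $\mathbb{F}_2$-linear trace conditions, one solves back up the tower: first solve $z^2+(\alpha\alpha')z=(\text{something linear in }u)$ for $z$ (solvable exactly because of the first trace condition, since the Artin--Schreier equation $z^2+cz=w$ has a solution in $\mathbb{F}_q$ iff $\Tr(w/c^2)=0$), then solve the next quadratic for $x$, using the second trace condition for solvability. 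This shows $\Ima T_{\alpha,\alpha'}$ is exactly the codimension-$2$ space defined by those two trace conditions; distinctness of $\alpha$ and $\alpha'$ guarantees the two linear functionals are independent, so the codimension is exactly $2$ and not $1$.

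It then remains to show that \emph{every} codimension-$2$ subspace $V$ arises this way. Since $V$ has codimension $2$, it is the common kernel of two independent $\mathbb{F}_2$-linear functionals $\mathbb{F}_q\to\mathbb{F}_2$; every such functional is of the form $z\mapsto\Tr_{\mathbb{F}_q/\mathbb{F}_2}(\lambda z)$ for a unique $\lambda\in\mathbb{F}_q^\ast$, so $V=\{z:\Tr(\lambda_1 z)=\Tr(\lambda_2 z)=0\}$ with $\lambda_1\neq\lambda_2$ in $\mathbb{F}_q^\ast$. Comparing with the description of $\Ima T_{\alpha,\alpha'}$ from the previous paragraph, I would match the two pairs of functionals: the functionals attached to $T_{\alpha,\alpha'}$ are $z\mapsto\Tr(z/(\alpha\alpha')^2\cdot(\text{unit}))$ and its analogue for another product of two of $\{\alpha,\alpha',\alpha+\alpha'\}$, so choosing $\alpha,\alpha'$ amounts to solving a small system of algebraic equations (essentially prescribing $\alpha\alpha'$ and $\alpha^2+\alpha\alpha'+\alpha'^2$, equivalently the elementary symmetric functions of $\{\alpha,\alpha'\}$, in terms of $\lambda_1,\lambda_2$) which always has a solution in $\mathbb{F}_q$ with $\alpha\neq\alpha'$, possibly after enlarging nothing since we are already in $\mathbb{F}_q=\mathbb{F}_{2^n}$ and the relevant resolvent is of low degree. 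The main obstacle I anticipate is precisely this last bookkeeping step: pinning down the exact pair of trace functionals cutting out $\Ima T_{\alpha,\alpha'}$ (getting the scaling factors and which two products $\gamma_i\gamma_j$ appear exactly right), and then checking that the resulting system for $(\alpha,\alpha')$ is always solvable over $\mathbb{F}_q$ with the two elements distinct; once the "$\Ima T_{\alpha,\alpha'}=\{z:\Tr(\lambda_1 z)=\Tr(\lambda_2 z)=0\}$" description is nailed down, the surjectivity onto all codimension-$2$ subspaces is a routine counting/linear-algebra matching.
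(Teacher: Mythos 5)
Your overall strategy coincides with the paper's: describe $\Ima T_{\alpha,\alpha'}$ as the common kernel of two trace functionals, then show that every independent pair of such functionals arises from some choice of $(\alpha,\alpha')$. Two points need repair. First, the relevant hyperplanes are not $\Ima S_\alpha\cap\Ima S_{\alpha'}$, i.e.\ not the kernels of $z\mapsto\Tr_{\mathbb{F}_q/\mathbb{F}_2}(z/\alpha^2)$ and $z\mapsto\Tr_{\mathbb{F}_q/\mathbb{F}_2}(z/\alpha'^2)$: writing $T_{\alpha,\alpha'}=S_{\alpha'^2+\alpha\alpha'}\circ S_\alpha=S_{\alpha^2+\alpha\alpha'}\circ S_{\alpha'}$ shows that the correct functionals are $z\mapsto\Tr(z/(\alpha^2+\alpha\alpha')^2)$ and $z\mapsto\Tr(z/(\alpha'^2+\alpha\alpha')^2)$ (your second paragraph, with products of pairs from $\{\alpha,\alpha',\alpha+\alpha'\}$, is closer to the truth). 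Moreover your proposed converse (``solve back up the tower'') is delicate: solvability of the second Artin--Schreier equation depends on which of the two preimages $z$ you pick at the first stage, so the two trace conditions on $u$ do not transparently propagate. The paper sidesteps this: the inclusion follows from Hilbert 90, and equality then follows from a dimension count, because $T_{\alpha,\alpha'}(x)=x^4+(\alpha^2+\alpha'^2+\alpha\alpha')x^2+\alpha\alpha'(\alpha+\alpha')x$ is itself $\mathbb{F}_2$-linear with kernel $\{0,\alpha,\alpha',\alpha+\alpha'\}$, so its image automatically has codimension exactly $2$.

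Second, and more seriously, the step you defer as ``routine matching'' is where the lemma's actual content lies: one must show that for every pair $(u,v)$ of distinct elements of $\mathbb{F}_q^{\ast}$ there exist distinct $\alpha,\alpha'\in\mathbb{F}_q^{\ast}$ with $\alpha^2+\alpha\alpha'=u$ and $\alpha'^2+\alpha\alpha'=v$. A low-degree resolvent does not by itself produce a root in a finite field, so this cannot be waved away. The paper proves it by checking that $(\alpha,\alpha')\mapsto(\alpha^2+\alpha\alpha',\alpha'^2+\alpha\alpha')$ is injective on the set of off-diagonal pairs of nonzero elements, hence bijective; alternatively, adding the two equations gives $\alpha+\alpha'=(u+v)^{1/2}$, whence $\alpha=u/(u+v)^{1/2}$ and $\alpha'=v/(u+v)^{1/2}$, which are nonzero and distinct precisely because $u$ and $v$ are. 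Once you supply this verification (and correct the hyperplanes), your argument closes and is essentially the paper's.
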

\begin{proof}
First we prove that $\Ima T_{\alpha,\alpha'} $ is the intersection 
of the kernels of the morphisms
$x \mapsto \Tr_{\mathbb{F}_{2^n} / \mathbb{F}_{2} } \left( \frac{x}{(\alpha^2+\alpha \alpha')^ 2 } \right) $
and 
$x \mapsto \Tr_{\mathbb{F}_{2^n} / \mathbb{F}_{2} } \left( \frac{x}{(\alpha'^2+\alpha \alpha')^2 }\right) $
where 
$\Tr_{\mathbb{F}_{2^n} / \mathbb{F}_{2} }$ is the Trace function 
relative to the extension $\mathbb{F}_{2^n} / \mathbb{F}_{2}$.
Let us prove 
 that $\Ima T_{\alpha,\alpha'}$
is included in the kernel of one the two morphisms. 
Indeed, 
if $z=T_{\alpha,\alpha'}(x)$, then
$z=u(u+ \gamma)$ with $\gamma=\alpha'^2+ \alpha \alpha'$
 and $u=x(x+\alpha)$. 
The Hilbert 90 Theorem implies that 
$\Tr_{\mathbb{F}_{2^n} / \mathbb{F}_{2} }( z  /\gamma^2)=0$ and we are done.
We have the inclusion in the kernel of the other morphism by symmetry, 
and we conclude with a dimension argument.

As any hyperplane of  $\mathbb{F}_{2^n}$ is the kernel
of a linear form $x \mapsto \Tr(w.x)$ for a good choice of $w \in \mathbb{F}^{\ast}_{2^n}$,
and as
$x\mapsto1/x^2$ is a bijection onto 
$\mathbb{F}^{\ast}_{2^n}$
it is now sufficient to prove that for all couple $(u,v)$ of
distinct elements of $\mathbb{F}_{2^n}^{\ast}$ there exists 
a couple of distinct elements  $(\alpha,\alpha')$ of
 $\mathbb{F}_{2^n}^{\ast}$ such that
$\alpha^2+\alpha \alpha'=u$ and $\alpha'^2+\alpha \alpha'=v$.
To this end, we consider the function
$
\Theta 
 \ : \ \mathbb{F}_{2^n}^{\ast} \times \mathbb{F}_{2^n}^{\ast} \setminus \Delta
\rightarrow \mathbb{F}_{2^n}^{\ast} \times\mathbb{F}_{2^n}^{\ast} \setminus \Delta$ which maps
 $(\alpha, \alpha')$ to $(\alpha^2+\alpha \alpha',\alpha'^2+\alpha \alpha')$
 where $\Delta$ denotes the diagonal.
It is well defined because if $\alpha^2+\alpha \alpha' = \alpha'^2+\alpha \alpha'$
then $\alpha^2=\alpha'^2$ and so  $\alpha=\alpha'$.
If $\Theta (\alpha_1,\alpha_1')=\Theta (\alpha_2,\alpha_2')$,
then one has the two equalities
$\alpha_1^2+\alpha_1 \alpha_1' = \alpha_2^2 + \alpha_2 \alpha_2'$ and
$\alpha_1'^2+\alpha_1 \alpha_1' = \alpha_2'^2 + \alpha_2 \alpha_2'$.
It implies $(\alpha_1+\alpha_1')^2=(\alpha_2+\alpha_2')^2$ and 
so there exists $\mu \in \mathbb{F}_{2^n}$ such that 
$\mu=\alpha_1+\alpha_1'=\alpha_2+\alpha_2'$.
Using the first equality one obtains $\alpha_1 \mu = \alpha_2 \mu$.
We know that $\mu \neq 0$,
otherwise  we would have $\alpha_1=\alpha_1'$, and 
$(\alpha_1,\alpha_1') \in \Delta$, a contradiction.
So we can deduce $\alpha_1 = \alpha_2$ 
and using the first equality one more time we have $\alpha_1 \alpha_1' = \alpha_2 \alpha_2'$,
and so $\alpha_1'=\alpha_2'$.
Hence the function  $\Theta$ is injective and thus bijective.
\end{proof}

\section{Main theorem}\label{section_main_theorem}

We will use all the previous propositions to prove  our main result, namely that
most polynomials $f$ over ${\mathbb F}_ {q}$ have
a maximal $\delta^2(f)$.
More precisely, we prove the following theorem.

\begin{theorem}\label{maintheorem}
Let $m$ be an integer such that $m\geqslant 7$ 
and $m\equiv 0 \pmod 8$ 
(respectively $m \equiv 1,2,7\pmod 8$),
 let $\delta_0=m-4$ (respectively  $\delta_0=m-5, 
m-6, m-3$). Then we have
$$\lim_{n\rightarrow \infty}
\frac{\sharp\{f\in{\mathbb F}_{2^n}[x] \mid \deg(f)=m,\ \delta^2(f)=\delta_0\}}
{\sharp\{f\in{\mathbb F}_{2^n}[x] \mid \deg(f)=m\}}=1.$$
\end{theorem}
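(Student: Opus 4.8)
The plan is to assemble the machinery developed in Sections~\ref{section_nabla}--\ref{section_good} and combine it with a counting argument. First I would fix $m \geqslant 7$ with $m \equiv 0,1,2,7 \pmod 8$ and recall that by Definition~\ref{definition_de_d} the integer $d=d(m)$ is odd in exactly these cases, so that Morse polynomials of degree $d$ exist. The target value $\delta_0$ is precisely the bound $4d$ coming from Table~\ref{table:definition_of_d}: indeed $\deg D^2_{\alpha,\alpha'}f \leqslant 4d$, and since $D^2_{\alpha,\alpha'}f(x)=g(T_{\alpha,\alpha'}(x))$ with $g=L_{\alpha,\alpha'}(f)$ of degree $\leqslant d$, the fibre $(D^2_{\alpha,\alpha'}f)^{-1}(\beta)$ has at most $4d$ elements; so $\delta^2(f)\leqslant \delta_0$ always, and the whole point is to show $\delta^2(f)=\delta_0$ for almost all $f$. (One checks $\delta_0$ equals $m-4$, $m-5$, $m-6$, $m-3$ respectively by substituting the four formulas for $d$.)

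The core of the proof is to produce, for all $n$ large, a set of polynomials $f$ of degree $m$ of density tending to $1$ for which $\delta^2(f)=\delta_0$. Here I would run the following chain. Apply Proposition~\ref{covers} with a parameter $\varepsilon>0$: it gives $k$ and $N_1$, and for each $n\geqslant N_1$ a family of couples $(\alpha_i,\alpha_i')_{1\leqslant i\leqslant k}$ such that at least $(1-\varepsilon)q^m(q-1)-q^m$ polynomials $f$ of degree $m$ satisfy, for some index $i$, both that $L_{\alpha_i,\alpha_i'}(f)$ has degree exactly $d$ and that $b_1/b_0$ of $L_{\alpha_i,\alpha_i'}(f)$ has the form $x(x+\alpha_i)(x+\alpha_i')(x+\alpha_i+\alpha_i')$ with $x\in\F_q$. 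For such an $f$, Proposition~\ref{proposition:bounded_nonMorse} lets me discard at most $k\tilde d\, q^m$ further polynomials so that on the remaining set $L_{\alpha_i,\alpha_i'}(f)$ is moreover Morse; then Proposition~\ref{regular}(iv) shows $\Omega/\F_q(t)$ is a regular extension, and Proposition~\ref{application_Chebotarev} (valid for $n\geqslant N_2$ depending only on $d$) produces $\beta\in\F_q$ with $D^2_{\alpha_i,\alpha_i'}f(x)+\beta$ splitting in $\F_q[x]$ with no repeated factors. Since $\deg D^2_{\alpha_i,\alpha_i'}f=4\deg L_{\alpha_i,\alpha_i'}(f)=4d=\delta_0$, this split means $D^2_{\alpha_i,\alpha_i'}f(x)=\beta$ has exactly $\delta_0$ solutions in $\F_q$, whence $\delta^2(f)\geqslant\delta_0$, and combined with the trivial upper bound $\delta^2(f)\leqslant\delta_0$ we get equality.

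It remains to turn this into the limit statement. For $n\geqslant\max(N_1,N_2)$ the set of degree-$m$ polynomials with $\delta^2(f)=\delta_0$ contains the set from Proposition~\ref{covers} minus the non-Morse exceptions, so its cardinality is at least
$$
(1-\varepsilon)q^m(q-1)-q^m-k\tilde d\, q^m .
$$
Dividing by $\sharp\{f:\deg f=m\}=q^m(q-1)$ gives a lower bound
$$
(1-\varepsilon)-\frac{1+k\tilde d}{q-1},
$$
and letting $n\to\infty$ (so $q\to\infty$) the fraction is eventually $\geqslant 1-2\varepsilon$. Since $\varepsilon>0$ was arbitrary and the ratio is always $\leqslant 1$, the limit is $1$, which is the claim. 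The main obstacle, and the only place genuine work beyond bookkeeping is needed, is guaranteeing that the numerology is consistent across the four congruence classes: namely that $d$ is odd exactly when $m\equiv 0,1,2,7\pmod 8$ (so Condition~(c) for Morse can hold), that $4d$ equals the claimed $\delta_0$, and that the various ``$N$ depending only on $d$'' from Propositions~\ref{proposition:bounded_nonMorse}, \ref{application_Chebotarev}, \ref{covers} can be taken uniformly; all of this is already prepared in the earlier sections, so the proof of Theorem~\ref{maintheorem} itself is essentially an orchestration of those results together with the elementary counting above.
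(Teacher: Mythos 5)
Your argument is essentially the paper's own proof: the same chain Proposition~\ref{covers} $\rightarrow$ Proposition~\ref{proposition:bounded_nonMorse} $\rightarrow$ Proposition~\ref{regular} $\rightarrow$ Proposition~\ref{application_Chebotarev}, followed by the same count $\bigl((1-\varepsilon)q^m(q-1)-q^m-k\tilde{d}q^m\bigr)/\bigl(q^m(q-1)\bigr)\geqslant 1-2\varepsilon$. The only imprecision is your parenthetical claim that $\delta^2(f)\leqslant\delta_0$ for \emph{every} $f$ of degree $m$ --- this fails when $D^2_{\alpha,\alpha'}f$ is constant for some pair (e.g.\ $f=T_{\alpha,\alpha'}^{m/4}$ when $8\mid m$ gives $\delta^2(f)=q$) --- but on the set produced by Proposition~\ref{covers} one has $a_{j_1}\neq 0$, so by Lemma~\ref{b1overb0} the leading coefficient $b_0=\alpha\alpha'(\alpha+\alpha')a_{j_1}$ is nonzero for \emph{every} pair, $\deg D^2_{\alpha,\alpha'}f=4d$ exactly, and the upper bound does hold there; the paper's own proof elides this same point, so it is not a departure from its argument.
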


\begin{proof}
Recall that we set $q=2^n$.
We fix an integer $m \geqslant 7$ and consequently 
an integer $d$ defined by Table \ref{table:definition_of_d}
and an integer $\tilde{d}$ depending only on $d$ as introduced in
Proposition \ref{proposition:bounded_nonMorse}.

Let $\varepsilon>0$.
We fix an integer $N_1$ satisfying the properties of Proposition \ref{application_Chebotarev}.
By Proposition \ref{covers} there exist integers $k$ and $N_2$ 
such that
for any $n \geqslant N_2$ we can choose  $k$ couples
 $(\alpha_1,\alpha_1'),\ldots,(\alpha_k,\alpha_k') $ of disctinct elements of 
$ {\mathbb F}_{q}^*$
such that
 for at least
  $(1-\varepsilon)(q-1)q^m-q^m$ polynomials $f\in{\mathbb F}_q[x]$
  of degree $m$ the polynomial
$L_{\alpha_i,\alpha_i'} (f)$ has degree $d$ for all $i$, and 
at least one of the $k$ equations 
$$\frac{b_1}{b_0} =x(x+\alpha_i)(x+\alpha_i')(x+\alpha_i+\alpha_i')$$
has a solution in ${\mathbb F}_q$, where 
$L_{\alpha_i,\alpha_i'}(f(x))= b_0 x^d + b_1 x^{d-1} + \cdots +b_d$. 
Finally, 
we fix an integer $N_3$ such that
for all $n\geqslant N_3$
\begin{equation}
\label{condition_un}
0 \leqslant \frac{q^m  + k \tilde{d}q^m}{(q-1)q^m}\leqslant \varepsilon.
\end{equation}
Let $n \geqslant \textrm{Max}(N_1,N_2,N_3)$ 
and  a polynomial $f$ associated to a couple $(\alpha_i, \alpha_i')$
satisfying the preceeding conditions.
If we suppose that $L_{\alpha_i,\alpha_i'}(f)$ is Morse,
then by  Proposition \ref{regular} 
the extension $\Omega/\mathbb{F}_q(t)$ is regular
where $\Omega$ is the Galois closure of $D_{\alpha_i,\alpha_i'}f (x)+t$.
Hence
by Proposition \ref{application_Chebotarev} there exists $\beta \in \mathbb{F}_q$ such that
$D^2_{\alpha_i , \alpha_i' } (f) (x) = \beta$ has $4d$ solutions in $\mathbb{F}_q$.
It amounts to saying that $\delta^2(f)=\delta_0$.
Let us count these polynomials: $f$ is choosen among 
the $(1-\varepsilon)(q-1)q^m-q^m$ polynomials given by Proposition \ref{covers},
but we have to remove the polynomials $f$ such that for all $i \in \{ 1, \ldots ,k \}$
the polynomial $L_{\alpha_i, \alpha_i'} (f)$ is non-Morse.
Thanks to Proposition \ref{proposition:bounded_nonMorse} 
we know we have to remove at most 
$k \tilde{d} q^m$ polynomials.
To obtain the density we have to divide by $(q-1)q^m$ which is the number of polynomials of degree $m$.
Finally, the condition (\ref{condition_un}) above ensures that this density is greater than or equal to
$1 - 2 \varepsilon$. 
\end{proof}

\section{The inversion mapping}\label{inverse}

We conclude the paper by the study of the second order differential uniformity of 
the inversion mapping from ${\mathbb F}_{q}$ (with $q=2^n$) to itself which sends $x$ to $x^{-1}$ if $x\not=0$ and 0 to 0  and which corresponds to the polynomial $f(x)=x^{q-2}$ of ${\mathbb F}_q[x]$. 
The $S$-box used by AES involves precisely this function in the case where  $n=8$.
Nyberg proved in \cite{Nyberg}  that it
 has a differential uniformity $\delta(f)=2$ for $n$ odd and $\delta(f)=4$ for $n$  even.
 
 We determine here its
second order differential uniformity over ${\mathbb F}_{2^n}$ for any $n$. By a direct computation, we can show that 
$\delta^2(f)=4$ over ${\mathbb F}_{2^n}$  for $n=2, 4$ and $5$
   and that  $\delta^2(f)=8$ for $n=3$. For $n\geqslant 6$, we have the following proposition.

\begin{proposition}
The inversion mapping $f$ over ${\mathbb F}_{2^n}$  has a second order differential uniformity $\delta^2(f)=8$ for any $n\geqslant 6$.
\end{proposition}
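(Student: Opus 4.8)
The plan is to analyze directly the second order derivative of $f(x)=x^{q-2}$, i.e. the inversion map $\iota$ sending $x\mapsto x^{-1}$ for $x\neq 0$ and $0\mapsto 0$. Since $D^2_{\alpha,\alpha'}f$ depends only on the $\mathbb{F}_2$-span of $\alpha,\alpha'$, I would fix a two-dimensional $\mathbb{F}_2$-subspace $W=\{0,\alpha,\alpha',\alpha+\alpha'\}$ and study, for $\beta\in\mathbb{F}_q$, the equation
$$
\iota(x)+\iota(x+\alpha)+\iota(x+\alpha')+\iota(x+\alpha+\alpha')=\beta.
$$
First I would prove the upper bound $\delta^2(f)\leqslant 8$. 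Away from the four ``bad'' points $x\in W$ (where some argument of $\iota$ vanishes), on the set where all of $x,x+\alpha,x+\alpha',x+\alpha+\alpha'$ are nonzero, the equation becomes a genuine rational identity $\frac1x+\frac1{x+\alpha}+\frac1{x+\alpha'}+\frac1{x+\alpha+\alpha'}=\beta$; clearing denominators by $T_{\alpha,\alpha'}(x)=x(x+\alpha)(x+\alpha')(x+\alpha+\alpha')$ yields a polynomial equation of low degree in $x$ whose degree I must compute carefully (the numerator $\sum_{\gamma\in W}\prod_{\gamma'\in W,\gamma'\neq\gamma}(x+\gamma')$ has degree $\leqslant 2$, and in characteristic $2$ one checks its exact degree). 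For $\beta\neq 0$ this gives at most $\deg(\beta T_{\alpha,\alpha'}(x))=4$ solutions, and for $\beta=0$ at most the degree of the numerator, which is small; adding the at most $4$ solutions coming from $x\in W$ gives the bound $8$. I would be careful to track, for each of the four bad points, what value the left-hand side actually takes, since those contributions are exactly what can push the count up to $8$.

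Next I would establish the lower bound $\delta^2(f)\geqslant 8$ for $n\geqslant 6$, which is the substantive part. The natural strategy is to exhibit a single well-chosen subspace $W$ and value $\beta$ for which the count equals $8$: choose $\beta=0$ and a subspace $W$ for which the rational equation $\sum_{\gamma\in W}\iota(x+\gamma)=0$ has the maximal number of solutions among $x\notin W$, then add the solutions among $x\in W$. Concretely, with $\beta=0$ the equation on $x\notin W$ reduces to vanishing of the degree-$\leqslant 2$ numerator $P_W(x)=\sum_{\gamma\in W}\prod_{\gamma'\neq\gamma}(x+\gamma')$; I would compute $P_W$ explicitly in terms of the elementary symmetric functions of $W$ (in characteristic $2$, since $\sum_{\gamma\in W}\gamma=0$, many terms collapse) and show it is a nonzero quadratic whose two roots lie in $\mathbb{F}_q$ for a suitable choice of $W$ — this is where a counting/averaging or explicit parametrization argument over $\mathbb{F}_{2^n}$ with $n\geqslant 6$ is needed to guarantee enough room to make the discriminant condition solvable. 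Then I would check that several of the points $x\in W$ also satisfy $\sum_{\gamma\in W}\iota(x+\gamma)=0$: for instance $x=0$ gives $0+\iota(\alpha)+\iota(\alpha')+\iota(\alpha+\alpha')=\iota(P_W\text{-type expression})$, and choosing $W$ inside a subfield or with $\alpha,\alpha',\alpha+\alpha'$ satisfying $\alpha^{-1}+\alpha'^{-1}+(\alpha+\alpha')^{-1}=0$ forces this vanishing simultaneously at all four bad points. Assembling $2$ solutions off $W$ and $4$ (or more) solutions on $W$ — or the reverse balance — I would arrive at exactly $8$.

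The main obstacle I anticipate is the lower bound bookkeeping: one must choose the subspace $W$ and the target $\beta$ so that the off-$W$ polynomial equation \emph{and} the on-$W$ evaluations conspire to total exactly $8$, and then verify such a $W$ exists in $\mathbb{F}_{2^n}$ for every $n\geqslant 6$. This is a concrete but delicate finite-field computation: it likely amounts to showing that a certain explicit system (``$\alpha,\alpha'$ with $\alpha^{-1}+\alpha'^{-1}+(\alpha+\alpha')^{-1}=0$ and an auxiliary quadratic splitting over $\mathbb{F}_q$'') has a solution, which by clearing denominators becomes ``a fixed low-degree curve or variety over $\mathbb{F}_{2^n}$ has an $\mathbb{F}_{2^n}$-point with all coordinates nonzero and distinct''. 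For $n\geqslant 6$ the field is large enough that a Weil-bound or a direct parametrization settles this, and the small cases $n=2,4,5$ (where the answer is $4$) and $n=3$ (where it is $8$) are exactly the anomalies excluded or handled separately, which is consistent with this picture. I would therefore expect the proof to split cleanly into: (a) the uniform upper bound $\delta^2(f)\leqslant 8$ valid for all $n$, proved by the degree count above; and (b) the construction of an explicit witness achieving $8$, valid for $n\geqslant 6$, with the finite-field solvability being the crux.
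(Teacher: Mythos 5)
Your upper bound is fine, but the witness construction for the lower bound breaks on a characteristic-$2$ computation you left unverified. For a two-dimensional $\mathbb{F}_2$-subspace $W=\{0,\alpha,\alpha',\alpha+\alpha'\}$ one has $\sum_{\gamma\in W}\gamma=0$, so $T_{\alpha,\alpha'}(x)=x^4+e_2x^2+e_3x$ with $e_3=\alpha\alpha'(\alpha+\alpha')$, and the numerator you call $P_W(x)=\sum_{\gamma\in W}\prod_{\gamma'\neq\gamma}(x+\gamma')=T_{\alpha,\alpha'}'(x)$ is the nonzero \emph{constant} $\alpha\alpha'(\alpha+\alpha')$, not a quadratic. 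Consequently, off $W$ the equation reads $\beta\, T_{\alpha,\alpha'}(x)=\alpha\alpha'(\alpha+\alpha')$: for $\beta=0$ it has no solutions, and for $\beta\neq 0$ its solution set is a union of cosets of $W$ (since $T_{\alpha,\alpha'}$ is $W$-invariant), hence has $0$ or $4$ elements, never $2$. Your plan of taking $\beta=0$, two roots of a quadratic off $W$, plus points of $W$, therefore cannot reach $8$: with $\beta=0$ the count is at most $4$, namely the four points of $W$, all of which give the common value $\iota(\alpha)+\iota(\alpha')+\iota(\alpha+\alpha')=\frac{\alpha^2+\alpha\alpha'+\alpha'^2}{\alpha\alpha'(\alpha+\alpha')}$, and even those count only when that value is $0$.

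The only way to reach $8$ is to take $\beta$ equal to that common value (so it must be nonzero, i.e.\ $\alpha^2+\alpha\alpha'+\alpha'^2\neq 0$), which makes all four points of $W$ solutions, and then to show that $T_{\alpha,\alpha'}(x)=\alpha\alpha'(\alpha+\alpha')/\beta$ has a root in $\mathbb{F}_q$, contributing four more. That solvability is the real crux, and the paper handles it by identifying $\Ima T_{\alpha,\alpha'}$ with the intersection of two trace-zero hyperplanes (the content of Lemma \ref{lemma:representation}), which after normalizing $\alpha'=1$ reduces the question to the two conditions $\Tr_{\mathbb{F}_{2^n}/\mathbb{F}_2}\bigl(1/(\alpha^2+\alpha+1)\bigr)=0$ and $\Tr_{\mathbb{F}_{2^n}/\mathbb{F}_2}\bigl(\alpha^2/(\alpha^2+\alpha+1)\bigr)=0$; for $n$ even one takes $\alpha$ in the subfield $\mathbb{F}_{2^{n/2}}$, and for $n$ odd one counts points on an explicit genus-$3$ plane quartic via the Serre--Weil bound. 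Your closing paragraph gestures at the right kind of endgame (a fixed low-degree variety having an $\mathbb{F}_{2^n}$-point for $n$ large), but as set up --- around $\beta=0$ and a quadratic numerator --- the system you would write down is not the one that needs to be solved.
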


\begin{proof}
Set $q=2^n$ and let $\alpha, \alpha'\in{\mathbb F}_q^{\ast}$ such that $\alpha\not=\alpha'$ and $\beta\in{\mathbb F}_q$.
Consider the equation $D_{\alpha,\alpha'}f(x)=\beta$ i.e.
$$
x^{q-2}+(x+\alpha)^{q-2}+(x+\alpha')^{q-2}+(x+\alpha+\alpha')^{q-2}=\beta.
$$
Since $f$ is a monomial function, this equation can be written:
$$
\alpha'^{q-2}\left(\Bigl(\frac{x}{\alpha'}\Bigr)^{q-2}+\Bigl(\frac{x}{\alpha'}+\frac{\alpha}{\alpha'}\Bigr)^{q-2}+\Bigl(\frac{x}{\alpha'}+1\Bigr)^{q-2}+\Bigl(\frac{x}{\alpha'}+\frac{\alpha}{\alpha'}+1\Bigr)^{q-2}\right)=\beta.
$$

Thus in order to compute $\delta^2(f)$  we can suppose that $\alpha'=1$. So we consider now for $\alpha\in{\mathbb F}_q\setminus\{0,1\}$ and $\beta\in{\mathbb F}_q$ the number of solutions of the equation:

\begin{equation}
x^{q-2}+(x+\alpha)^{q-2}+(x+1)^{q-2}+(x+\alpha+1)^{q-2}=\beta.
\label{nouvelleequation}
\end{equation}

If $x\not\in\{0,1,\alpha,\alpha+1\}$, then this equation
 is equivalent to:
$$x^{-1}+(x+\alpha)^{-1}+(x+1)^{-1}+(x+\alpha+1)^{-1}=\beta$$
which is equivalent to:
\begin{equation}
\beta T_{\alpha,1}(x)+\alpha(\alpha+1)=0
\label{lequationreduite}
\end{equation}
where $T_{\alpha,\alpha'}(x)=x(x+\alpha)(x+\alpha')(x+\alpha+\alpha')$ as introduced in Section \ref{section:Geometric_and_arithmetic_mondormy_groups}.

Thus Equation \eqref{lequationreduite} has at most four solutions in ${\mathbb F}_q\setminus\{0,1, \alpha, \alpha+1\}$. Precisely, it has no solution or it has four solutions
since  $T_{\alpha,1}(x)=T_{\alpha,1}(x+\alpha)=T_{\alpha,}(x+1)=T_{\alpha,1}(x+\alpha+1)$.

An element $x\in\{0,1,\alpha,\alpha+1\}$
 is a solution of Equation \eqref{nouvelleequation} if and only if
 $\beta=\frac{\alpha^2+\alpha+1}{\alpha(\alpha+1)}.$
Now let us solve  Equation \eqref{nouvelleequation} in
${\mathbb F}_q\setminus\{0,1,\alpha,\alpha+1\}$ with such $\beta$.
 If $\beta=0$ then Equation  \eqref{lequationreduite} has no solution so we can suppose that $\beta\not=0$ i.e. $\alpha^2+\alpha+1\not=0$.
 Then  equation  \eqref{lequationreduite} can be written
$T_{\alpha,1}(x)=\gamma$
\label{derniereequation}
where
$\gamma=\frac{\alpha^2(\alpha^2+1)}{\alpha^2+\alpha+1}.$
We have shown in the proof of Lemma \ref{lemma:representation} that
 $\Ima T_{\alpha,\alpha'} $ is equal to the intersection 
of the kernels of the morphisms
$x \mapsto \Tr_{\mathbb{F}_{2^n} / \mathbb{F}_{2} } \left( \frac{x}{(\alpha^2+\alpha \alpha')^ 2 } \right)$
and 
$x \mapsto \Tr_{\mathbb{F}_{2^n} / \mathbb{F}_{2} } \left( \frac{x}{(\alpha'^2+\alpha \alpha')^2 }\right)$.
Hence the equation $T_{\alpha,1}(x)=\gamma$ has a solution if and only if
$\gamma$ is in the intersection of the kernels of these two maps, i.e.
\begin{equation}
\Tr_{\mathbb{F}_{2^n} / \mathbb{F}_{2} } \left( \frac{1}{\alpha^2+\alpha+1 } \right) =0\ \  {\rm and}\ \  \Tr_{\mathbb{F}_{2^n} / \mathbb{F}_{2} } \left( \frac{\alpha^2}{\alpha^2+\alpha+1 }\right) =0.
\label{system}
\end{equation}

\bigskip

In the case where $n$ is even,  any element  in the subfield ${\mathbb F}_{2^{n/2}}$ has a trace equal to zero. Thus, any $\alpha$ different from 0 and 1 in this subfield and with $\alpha^2+\alpha+1\not=0$  verifies the two previous conditions of \eqref{system}. Thus if the subfield  ${\mathbb F}_{2^{n/2}}$ have more than 4 elements, i.e. if $n>4$  then $\delta^2(f)=8$.

\bigskip

In order to solve the problem in the  case where 
$n$ is odd, consider the algebraic surfaces $S_1$ and $S_2$ in the affine space ${\mathbb A}^3$ given respectively by the equations
$(y^2+y)(x^2+x+1)=1$
and 
$(z^2+z)(x^2+x+1)=x^2.$
Consider the affine curve $C=S_1\cap S_2$ in ${\mathbb A}^3$.
By Hilbert 90 theorem, a solution $\alpha$ in ${\mathbb F}_{2^n}$ to Equations \eqref{system} corresponds to four 
points $(x,y,z)$ on $C$.

Furthermore if $(x,y,z)\in C$ then we can show that
$x(y^2+y) + y^2+y+z^2+z+1=0$
 and
$x(z^2+z+1) + y^2+y+1=0.$
Then we obtain:

\begin{equation}
(y^2+y)^2+(y^2+y)(z^2+z) +(z^2+z+1)^2=0.
\label{Klein}
\end{equation}
Consider the projection

$$
\begin{matrix}
\pi:&{\mathbb A}^3 & \longrightarrow & {\mathbb A}^2\\
& (x,y,z)& \longmapsto & (y,z)
\end{matrix}
$$
and the affine plane curve $D$ defined by  Equation (\ref{Klein}).
Consider also 
$Z=\{(y,z)\in{\mathbb A}^2 \mid y^2+y=0 \ \ {\rm and}\ \ z^2+z+1=0\}.$
The set $Z$ has 4 points and each of them has degree 2 over ${\mathbb F}_2$.
The projection $\pi$ provides an isomorphism between $C$ and $D\setminus Z$ whose inverse is given by:

\begin{align*}
D\setminus Z  & \to     C
\\
                (y,z)                          & \mapsto \begin{cases}
                                                         \left(\frac{z^2+z+1}{y^2+y}+1, y,z\right) & \text{if }y^2+y\neq 0,\\[0.5em]
                                                         \left(\frac{y^2+y+1}{z^2+z+1}, y,z\right) & \text{if }z^2+z+1\neq 0.\\
                                                         \end{cases}
\end{align*}

 Let us denote by $\overline{D}$ the projective closure of $D$ in the projective plane ${\mathbb P}^2$. It has 2 points at infinity and each of them has degree 2.
 
It follows that the curves $C$ and $\overline D$ have the same number of rational points over ${\mathbb F}_{2^n}$ for $n$ odd.
 Furthermore, 
 the curve $\overline D$ is a smooth projective plane quartic, so it is absolutely irreducible and has genus 3. By Serre-Weil theorem (see \cite{Serre_cras}),  the number of rational points over ${\mathbb F}_{2^n}$ of $\overline D$ verifies:
$$\sharp {\overline D}({\mathbb F}_{2^n})\geqslant 2^n+1-3[{2^{(n+2)/2}}].$$

So, if $n\geqslant 7$, we have 
$\sharp {C}({\mathbb F}_{2^n})\geqslant 63$
and then there are at least 15 solutions to Equations \eqref{system} and the result follows.
\end{proof}
\bigskip

\bigskip
\bigskip
\noindent
{\bf Acknowledgments:} The authors want to thank Felipe Voloch for lightning discussions,
particularly concerning the strategy described in Section \ref{section_good}. They also want  to thank Philippe Langevin, Ren\'e Schoof and David Kohel for a nice discussion concerning the last section and the referee for 
helpful comments.

\bigskip
\bigskip
\noindent
{\bf References:}

\bigskip

\bibliographystyle{plain}
\bibliography{biblio_yff}
\end{document}